\theoremstyle{plain}
\newtheorem{thm}{Theorem}[section]
\newtheorem{cor}[thm]{Corollary}
\newtheorem{lem}[thm]{Lemma}
\newtheorem{prop}[thm]{Proposition}
\theoremstyle{definition}
\newtheorem{defn}[thm]{Definition}
\newtheorem{exm}[thm]{Example}
\theoremstyle{remark}
\newtheorem{rem}[thm]{Remark}
\renewcommand{\mod}{\operatorname{mod}\nolimits}
\newcommand{\id}{\operatorname{id}\nolimits}
\newcommand{\add}{\operatorname{add}\nolimits}
\newcommand{\Hom}{\operatorname{Hom}\nolimits}
\newcommand{\Ker}{\operatorname{Ker}\nolimits}
\newcommand{\Ext}{\operatorname{Ext}\nolimits}
\newcommand{\M}{\mathcal M}
\newcommand{\B}{\mathcal B}
\newcommand{\uB}{\underline{\B}}
\newcommand{\U}{\mathcal U}
\newcommand{\V}{\mathcal V}
\newcommand{\W}{\mathcal W}
\newcommand{\h}{\mathcal H}
\newcommand{\T}{\mathcal T}
\newcommand{\K}{\mathcal K}
\newcommand{\svecv}[2]{\left(\begin{smallmatrix}
      #1 \\
      #2
    \end{smallmatrix}\right)}
\newcommand{\svech}[2]{\left(\begin{smallmatrix}
      #1 & #2
\end{smallmatrix}\right)}
\renewcommand{\emph}{\textit}
\renewcommand{\phi}{\varphi}
\begin{document}

\title{Half exact functors associated with cotorsion pairs on exact categories}
\author{Yu Liu}
\address{Matematiska institutionen \\ Uppsala Universitet \\ Box 480, 751 06 Uppsala, Sweden}
\email{liu.yu@math.uu.se}

\begin{abstract}
In the previous article "Hearts of twin cotorsion pairs on exact categories", we introduced the notion of the heart for any cotorsion pair on an exact category with enough projectives and injectives, and showed that it is an abelian category. In this paper, we construct a half exact functor from the exact category to the heart. This is analog of the construction of Abe and Nakaoka for triangulated categories. We will also use this half exact functor to find out a sufficient condition when two different hearts are equivalent.
\end{abstract}

\keywords{exact category, abelian category, cotorsion pair, heart, half exact functor}

\maketitle

\section{Introduction}

The important notion of $t$-\emph{structures}, introduced by Beilinson, Bernstein and Deligne \cite{BBD}, is deeply studied in the representation theory. A t-structure in a triangulated category $\T$ is a pair $(\T^{\leq 0},\T^{\geq 0}) $ of subcategories of $\T$, and one of the important properties is that
\begin{itemize}
\item the \emph{heart} $\h=\T^{\leq 0}\cap \T^{\geq 0}$ of the $t$-structure is an abelian category.
\end{itemize}
Moreover, we have the associated cohomological functor $H:\T \to \h$. A typical example is given by the standard $t$-structure $(\T^{\leq 0}, \T^{\geq 0})$ in the derived category $\mathsf D(\mathcal A)$ of an abelian category $\mathcal A$, where $\T^{\leq 0}$ consists of complexes with vanishing cohomologies in positive degrees, and $\T^{\geq 0}$ consists of complexes with vanishing cohomologies in negative degrees. When we have a derived equivalence between abelian categories $\mathcal A$ and $\B$, the we have a new $t$-structure in $\mathsf D(\mathcal A)$ induced by the standard $t$-structure of $\mathsf D(\mathcal B)$. Therefore, $t$-structure is important to study the derived equivalences.

There is a more general notion of \emph{torsion pairs} on triangulated categories, which is a pair $(\U,\V)$ of full subcategories on a triangulated category $\T$ such that
\begin{itemize}
\item $\Hom_\T(\U,\V)=0.$
\item Any object $T\in\T$ admits a triangle $U\rightarrow T\rightarrow V \rightarrow U[1]$ such that $U\in \U$ and $V\in \V$.
\end{itemize}
This notion is classical and has been widely used in the representation theory, since it is useful in various settings to study algebraic structure of triangulated categories.

By a technical reason, we consider a cotorsion pair instead of torsion pair: a pair $(\U,\V)$ on $\T$ is called a \emph{cotorsion pair} if $(\U,\V[1])$ is a torsion pair. Nakaoka introduced the notion of \emph{hearts} of cotorsion pairs on triangulated categories , as a generalization of the heart of $t$-structure, and showed that the hearts are abelian categories \cite{N}.  Abe and Nakaoka constructed a cohomological functor in the case of triangulated categories \cite{AN}, which is a generalization of cohomological functor for t-structure.

Motivated by Nakaoka's results, we will consider cotorsion pairs on Quillen's \emph{exact categories} \cite{Q} (also see \cite{K}), which generalize abelian categories. The cotorsion pairs on abelian categories are ubiquitus in homological algebra. It appears, for instance in tilting theory \cite{AR}, in Cohen-Macaulay representations \cite{AB} and in cluster tilting theory \cite{IY, KR, KZ}. In \cite{L}, we introduced hearts $\underline \h$ of cotorsion pairs $(\U,\V)$ on exact categories $\B$ and proved that they are abelian. In this paper we will construct associated half exact functors $H$ from the exact category $\B$ to the hearts $\underline \h$.

Throughout this paper, let $\B$ be a Krull-Schmidt exact category with enough projectives and injectives. Let $\mathcal P$ (resp. $\mathcal I$) be the full subcategory of projectives (resp. injectives) of $\B$. We recall the definition of a cotorsion pair and its heart on $\B$ \cite[{Definition 2.3}]{L}:

\begin{defn}\label{2}
Let $\U$ and $\V$ be full additive subcategories of $\B$ which are closed under direct summands.
\begin{itemize}

\item[(a)] We call $(\U,\V)$ a \emph{cotorsion pair} if it satisfies the following conditions:
\begin{itemize}

\item[$\bullet$] $\Ext^1_\B(\U,\V)=0$.

\item[$\bullet$] For any object $B\in \B$, there exits two short exact sequences
\begin{align*}
V_B\rightarrowtail U_B\twoheadrightarrow B,\quad
B\rightarrowtail V^B\twoheadrightarrow U^B
\end{align*}
satisfying $U_B,U^B\in \U$ and $V_B,V^B\in \V$.

\end{itemize}

\item[(b)] For any cotorsion pairs $(\U,\V)$, let $\W:=\U\cap \V$. Let
\begin{align*}
\text{ } \B^+:=\{B\in \B \text{ } | \text{ } U_B\in \W \}, \quad
\text{ } \B^-:=\{B\in \B \text{ } | \text{ } V^B\in \W \}, \quad
\h:=\B^+\cap\B^-.
\end{align*}
We call the additive subcategory $\h/\W$ of $\B/\W$ the \emph{heart} of cotorsion pair $(\U,\V)$.

\end{itemize}
\end{defn}






Let $\pi:\B \rightarrow \B/\W$ be the canonical quotient functor, for convenience, we denote the ideal quotient of $\B$ by $\W$ as $\uB:=\B/\W$. For any morphism $f\in \Hom_\B(X,Y)$, we denote its image in $ \Hom_{\uB}(X,Y)$ by $\underline f$. For any subcategory $\mathcal C\supseteq\W$ of $\B$, we denote by $\underline {\mathcal C}$ the full subcategory of $\uB$ consisting of the same objects as $\mathcal C$.

We recall the definition of the half exact functor on $\B$ (see e.g. \cite[p.24]{O}).


\begin{defn}\label{8.1}
A covariant functor $F$ from $\B$ to an abelian category $\mathcal A$ is called \emph{half exact} if for any short exact sequence
$\xymatrix{A \ar@{ >->}[r]^{f} &B \ar@{->>}[r]^{g} &C}$
in $\B$, 
the sequence
$F(A)\xrightarrow{F(f)} F(B)\xrightarrow{F(g)} F(C)$
is exact in $\mathcal A$.
\end{defn}


Let $\K=\add(\U*\V)$, The following is a main theorem of this paper. 

\begin{thm}[Theorem \ref{8.11}, Propositions \ref{8.7}, \ref{eqfunctor}]
Let $(\U,\V)$ be a cotorsion pair on $\B$,
\begin{itemize}

\item[(a)] There exists an associated half exact functor $H:\B\rightarrow \underline \h$ satisfying the following conditions,

\begin{itemize}
\item[$\bullet$] $H|_{\h}=\pi|_{\h}$.
\item[$\bullet$] For any object $X\in \B$, $H(X)=0$ if and only if $X\in \K$.
\end{itemize}

\item[(b)] If a half exact functor $G:\B\rightarrow \underline \h$ satisfies the following conditions,

\begin{itemize}
\item[$\bullet$] $G|_{\h}=\pi|_{\h}$.
\item[$\bullet$] For any object $X\in \B$, $G(X)=0$ if $X\in \K$.
\end{itemize}

then $G \simeq H$.

\end{itemize}
\end{thm}

According to this theorem, we call $\K$ the \emph{kernel} of the associated functor $H$.


We denote by $\Omega: \B/{\mathcal P}\rightarrow \B/{\mathcal P}$ the syzygy functor and by $\Omega^-: \B/{\mathcal I}\rightarrow \B/{\mathcal I}$ the cosyzygy functor. 
As an immediate consequence, for any short exact sequence
$\xymatrix{A \ar@{ >->}[r]^{f} &B \ar@{->>}[r]^{g} &C}$
in $\B$, there exist morphisms $h: C\rightarrow \Omega^- A$ and $h': \Omega C\rightarrow A$ such that the sequence
\begin{align*}
\cdots \xrightarrow{H(\Omega h')} H(\Omega A) \xrightarrow{H(\Omega f)} H(\Omega B) \xrightarrow{H(\Omega g)} H(\Omega C) \xrightarrow{H(h')} H(A) \xrightarrow{H(f)} H(B) \quad \quad \quad \quad\\
\xrightarrow{H(g)} H(C) \xrightarrow{H(h)} H(\Omega^- A) \xrightarrow{H(\Omega^- f)} H(\Omega^- B) \xrightarrow{H(\Omega^- g)} H(\Omega^- C) \xrightarrow{H(\Omega^- h)} \cdots
\end{align*}
is exact in $\underline \h$. Moreover, If we have a commutative diagram of short exact sequences
$$\xymatrix@C=0.6cm@R0.6cm{
A \ar@{ >->}[r]^{f} \ar[d]^a &B \ar@{->>}[r]^{g} \ar[d]^b &C \ar[d]^c\\
A_1 \ar@{ >->}[r]^{f_1} &B_1 \ar@{->>}[r]^{g_1} &C_1
}
$$
we can get a commutative diagram of long exact sequences in $\mathcal A$ (see Corollary \ref{8.15} for details).
$$\xymatrix@C=0.6cm@R0.6cm{
\cdots \ar[r] &F(\Omega C) \ar[d]_-{F(\Omega c)} \ar[r]^-{F(h')} &F(A) \ar[r]^{F(f)} \ar[d]^{F(a)} &F(B) \ar[r]^{F(g)} \ar[d]^{F(b)} &F(C) \ar[d]^{F(c)} \ar[r]^-{F(h)}  &F(\Omega^- A) \ar[d]^{F(\Omega^- a)} \ar[r] &\cdots\\
\cdots \ar[r] &F(\Omega C_1)  \ar[r]_-{F(h_1')} &F(A_1) \ar[r]_{F(f_1)} &F(B_1) \ar[r]_{F(g_1)}  &F(C_1)  \ar[r]_-{F(h_1)}  &F(\Omega^- A_1) \ar[r] &\cdots
}
$$

Note that any fixed exact category usually has many different cotorsion pairs, which give us a lot of different hearts. The half exact functor we construct gives us a way to find out the relationship between different hearts. For $k\in \{ 1,2 \}$, $(\U_k,V_k)$ be a cotorsion pair on $\B$, $\W_k= \U_k \cap \V_k$, $\h_k/\W_k$ be the heart of $(\U_k,V_k)$, $H_k$ be the associated half exact functor and $\K_k$ be the kernel of $H_k$. If $\W_1\subseteq \K_2$, then $H_2$ induces a functor $\beta_{12}:\h_1/\W_1\rightarrow \h_2/\W_2$, and we have the following proposition.

\begin{thm}[Propositions \ref{61}, \ref{nameless} and Theorem \ref{serre}]
Let $(\U_1,\V_1)$ and $(\U_2,\V_2)$ be cotorsion pairs on $\B$. If $\W_1\subseteq \K_2\subseteq\K_1$, then
\begin{itemize}

\item[(a)] We have an isomorphism $\beta_{21}\beta_{12}\simeq \id_{\h_1/\W_1}$ of functors from $\h_1/\W_1$ to $\h_2/\W_2$.

\item[(b)] $(\h_2\cap\K_1)/\W_2$ is a Serre subcategory of $\h_2/\W_2$.

\item[(c)] The localization of $\h_2/\W_2$ by the Serre subcategory $(\h_2\cap\K_1)/\W_2$ is equivalent to $\h_1/\W_1$.

\end{itemize}

\end{thm}

This implies the following corollary which gives a sufficient condition when two different hearts are equivalent (see Corollary \ref{suf}).

\begin{cor}
Let $(\U_1,\V_1)$ and $(\U_2,\V_2)$ be cotorsion pairs on $\B$. If $\K_1=\K_2$, then we have an equivalence $\h_1/\W_1\simeq \h_2/\W_2$ between two hearts.
\end{cor}

This result is interesting since this is an analog of derived equivalence.

To construct the associated half exact functor $H$, we first introduce two functors $\sigma^+:\uB\rightarrow \uB^+$ and $\sigma^-:\uB\rightarrow \uB^-$ in section 2, which are analogs of function functors associated with t-structures. In section 3, we show that these two functors commute. We prove the property of the half exact functor in section 4. The functor between different hearts are studied in section 5. The last section contains several examples of our results.

\section{Preliminaries}

We refer to \cite[{\S 2}]{L} and \cite{B} for the details of the exact categories. We introduce the following properties used a lot in this paper, the proofs can be found in \cite[{\S 2}]{B}:

\begin{prop}\label{PO}
Consider a commutative square
$$\xymatrix{
A \;\ar@{>->}[r]^{i} \ar[d]_f &B \ar[d]^{f'}\\
A' \;\ar@{>->}[r]_{i'} &B'
}
$$
in which $i$ and $i'$ are inflations. The following conditions are equivalent:
\begin{itemize}
\item[(a)] The square is a push-out.

\item[(b)] The sequence $\xymatrix{A \;\ar@{>->}[r]^-{\svecv{i}{-f}} &B\oplus A' \ar@{->>}[r]^-{\svech{f'}{i'}} &B'}$ is short exact.

\item[(c)] The square is both a push-out and a pull-back.

\item[(d)] The square is a part of a commutative diagram
$$\xymatrix@C=0.6cm@R0.6cm{
A\; \ar@{>->}[r]^i \ar[d]_f &B \ar[d]^{f'} \ar@{->>}[r] &C \ar@{=}[d]\\
A'\; \ar@{>->}[r]_{i'} &B' \ar@{->>}[r] &C
}$$
with short exact rows.
\end{itemize}
\end{prop}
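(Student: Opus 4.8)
The plan is to make condition (b) the hub: I will prove (b)$\Rightarrow$(a), (a)$\Rightarrow$(b), (b)$\Rightarrow$(c), (c)$\Rightarrow$(a), (b)$\Rightarrow$(d) and (d)$\Rightarrow$(a). Since (a)$\Leftrightarrow$(b), and (b) implies each of (c) and (d) while each of (c) and (d) implies (a), this establishes all four equivalences.

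The only fact doing real work is that $\svecv{i}{-f}\colon A\to B\oplus A'$ is \emph{always} an inflation. I would see this by factoring it as $A\xrightarrow{\svecv{1}{-f}}A\oplus A'\xrightarrow{\left(\begin{smallmatrix}i&0\\0&1\end{smallmatrix}\right)}B\oplus A'$, where the first map is split monic with retraction $\svech{1}{0}$, hence an inflation, the second is the direct sum of the inflations $i$ and $\id_{A'}$, hence an inflation, and inflations are closed under composition. Thus $\svecv{i}{-f}$ has a cokernel in $\B$. Since a map $\svech{g}{g'}\colon B\oplus A'\to T$ is killed by $\svecv{i}{-f}$ exactly when $gi=g'f$, the cokernel of $\svecv{i}{-f}$, together with its two structure maps, represents the push-out of the span $B\xleftarrow{i}A\xrightarrow{f}A'$. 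This gives (b)$\Rightarrow$(a) at once, and (a)$\Rightarrow$(b) because a push-out object is unique up to compatible isomorphism, so under (a) the map $\svech{f'}{i'}$ is, up to isomorphism, the cokernel of the inflation $\svecv{i}{-f}$; hence the displayed sequence is a conflation.

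For (b)$\Rightarrow$(c): in a conflation the inflation is the kernel of the deflation, so from (b) the map $\svecv{i}{-f}$ is the kernel of $\svech{f'}{i'}$; given $x\colon T\to B$ and $y\colon T\to A'$ with $f'x=i'y$, the map $\svecv{x}{-y}$ is killed by $\svech{f'}{i'}$, hence factors uniquely through $\svecv{i}{-f}$, and the factorization $t\colon T\to A$ satisfies $it=x$, $ft=y$ and is unique since $i$ is monic, so the square is also a pull-back; with (b)$\Rightarrow$(a) this yields (c), and (c)$\Rightarrow$(a) is trivial. For (b)$\Rightarrow$(d) I would apply the Noether lemma for composable inflations (cf. \cite{B}) to $A\xrightarrow{\svecv{1}{-f}}A\oplus A'\xrightarrow{\left(\begin{smallmatrix}i&0\\0&1\end{smallmatrix}\right)}B\oplus A'$: it produces a conflation relating the three cokernels, which compute to $A'$, to $B'$ (by (b)), and to $C:=\Coker i$ respectively, and tracking the induced maps identifies it with $A'\xrightarrow{i'}B'\twoheadrightarrow C$ sitting under $A\rightarrowtail B\twoheadrightarrow C$ via $f$, $f'$, $\id_C$ (commutativity of the square being exactly the hypothesis $f'i=i'f$), which is the diagram in (d). Finally, for (d)$\Rightarrow$(a): let $P$ be the cokernel of the inflation $\svecv{i}{-f}$, so $(P, B\to P, A'\to P)$ is the push-out and, by (b)$\Rightarrow$(d), sits in a conflation $A'\rightarrowtail P\twoheadrightarrow C$ lying under $A\rightarrowtail B\twoheadrightarrow C$; the universal property of $P$ gives a comparison map $\phi\colon P\to B'$, and since $B\oplus A'\twoheadrightarrow P$ is epic one checks that $\phi$ fits into a morphism of conflations which is $\id_{A'}$ on the left and $\id_C$ on the right, hence is an isomorphism by the five lemma in exact categories; then the square in (d) is isomorphic to the push-out square, so it is a push-out.

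I expect the only genuine obstacle to be that first step — verifying that $\svecv{i}{-f}$ is an inflation and that its cokernel really models the categorical push-out, equivalently that push-outs along inflations exist in $\B$ with the expected conflation attached. Once this, the Noether lemma, and the five lemma are available (all from \cite{B}), the remaining implications are formal diagram chases with kernels, cokernels and universal properties; the only thing needing a little care is the bookkeeping of signs in the $2\times2$ matrices, which is cosmetic.
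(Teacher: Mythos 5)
The paper does not give its own proof of this proposition; it explicitly cites B\"uhler \cite[\S 2]{B} for it. Your argument is correct and in essence reproduces B\"uhler's standard proof --- the load-bearing points being that $\svecv{i}{-f}$ is always an inflation (so the push-out along an inflation exists and is computed as its cokernel), the Noether lemma for composable inflations, and the short five lemma --- so the approach coincides with the one the paper relies on.
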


\begin{prop}\label{7}
\begin{itemize}
\item[(a)] If $\xymatrix{X \ar@{ >->}[r]^{i} &Y \ar@{->>}[r]^{d} &Z}$ and $\xymatrix{N \ar@{ >->}[r]^g &M\ar@{->>}[r]^{f} &Y}$ are two short exact sequences, then there is a commutative diagram of short exact sequences
$$\xymatrix@C=0.6cm@R0.6cm{
N \ar@{ >->}[d] \ar@{=}[r] &N \ar@{ >->}[d]^g\\
Q \ar@{->>}[d] \;\ar@{>->}[r] &M \ar@{->>}[d]^f \ar@{->>}[r] &Z \ar@{=}[d]\\
X \;\ar@{>->}[r]_i &Y \ar@{->>}[r]_d &Z
}$$
where the lower-left square is both a push-out and a pull-back.

\item[(b)] If $\xymatrix{X \ar@{ >->}[r]^{i} &Y \ar@{->>}[r]^{d} &Z}$ and $\xymatrix{Y \ar@{ >->}[r]^{g} &K \ar@{->>}[r]^f &L}$ are two short exact sequences, then there is a commutative diagram of short exact sequences
$$\xymatrix@C=0.6cm@R0.6cm{
X \ar@{=}[d] \ar@{ >->}[r]^{i} &Y \ar@{ >->}[d]^g \ar@{->>}[r]^{d} &Z \ar@{ >->}[d]\\
X \ar@{ >->}[r]  &K \ar@{->>}[r] \ar@{->>}[d]^f &R \ar@{->>}[d]\\
&L \ar@{=}[r] &L
}$$
where the upper-right square is both a push-out and a pull-back.
\end{itemize}
\end{prop}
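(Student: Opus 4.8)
The plan is to establish (a) by a single pull-back construction and to deduce (b) by duality, using that $\B^{\op}$ is again an exact category with inflations and deflations interchanged; under this passage the hypotheses of (b) turn into exactly those of (a). So I concentrate on (a).

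Given the deflation $f\colon M\twoheadrightarrow Y$ and the inflation $i\colon X\rightarrowtail Y$, the first step is to form the pull-back $Q$ of $f$ along $i$. In an exact category the pull-back of a deflation along an arbitrary morphism exists, and pulling the conflation $N\xrightarrow{g}M\xrightarrow{f}Y$ back along $i$ yields a conflation $N\rightarrowtail Q\xrightarrow{q}X$ together with a comparison morphism $p\colon Q\to M$ with $fp=iq$ that restricts to $g$ on $N$. This provides the left-hand column $N\rightarrowtail Q\twoheadrightarrow X$ and the top-left commuting square. Next I identify the middle row: the composite $df\colon M\twoheadrightarrow Z$ is a deflation (a composite of deflations), and by the universal property of $Q$ the morphism $p$ is identified with the kernel inclusion of $df$; hence $p\colon Q\rightarrowtail M$ is an inflation and $Q\xrightarrow{p}M\xrightarrow{df}Z$ is a conflation. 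The bottom row is the given $X\rightarrowtail Y\twoheadrightarrow Z$ and the right-hand column is $Z=Z$, and every square commutes by construction of $p$ and $q$.

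It then remains to treat the lower-left square
$$\xymatrix{Q\ar@{>->}[r]^{p}\ar@{->>}[d]_{q}&M\ar@{->>}[d]^{f}\\X\ar@{>->}[r]_{i}&Y}$$
of the diagram, which is a pull-back by its very construction; and since its horizontal arrows $p$ and $i$ are inflations and it sits inside the commutative diagram with short exact rows $Q\rightarrowtail M\twoheadrightarrow Z$ and $X\rightarrowtail Y\twoheadrightarrow Z$ (both with cokernel $Z$), condition (d) of Proposition \ref{PO} holds, so by that proposition the square is also a push-out. For (b) I would run (a) inside $\B^{\op}$: there the conflations $X\rightarrowtail Y\twoheadrightarrow Z$ and $Y\rightarrowtail K\twoheadrightarrow L$ of $\B$ become $Z\rightarrowtail Y\twoheadrightarrow X$ and $L\rightarrowtail K\twoheadrightarrow Y$, which have precisely the shape required by (a) (the first with $Y$ in the middle term, the second with $Y$ as right-hand term); applying (a) and reading the resulting $3\times 3$ diagram back in $\B$ gives the asserted diagram, the distinguished square now being the upper-right one. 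The one step that is not purely formal is the claim that the comparison morphism $p$ is an inflation — automatic in an abelian category, but needing the identification $p=\Ker(df)$ here — and that is where I expect the (mild) main obstacle to lie; everything else is the universal property of pull-backs together with Proposition \ref{PO}.
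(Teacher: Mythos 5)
Your argument is correct and is essentially the standard one: the paper gives no proof of Proposition \ref{7} but defers to B\"uhler's survey, where exactly this construction is carried out --- pull the deflation $f$ back along $i$, identify the pull-back $Q$ with $\Ker(df)$ (which works because $i=\Ker(d)$ is monic, so both objects represent the same functor over $M$), and invoke the equivalent characterizations of bicartesian squares from Proposition \ref{PO}; part (b) is indeed obtained by passing to the opposite exact category. The step you flag as the main obstacle is thus a routine universal-property check, and there is no gap.
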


We recall some important definitions and results of \cite{L}, which also work for a single cotorsion pair.

\begin{defn}\label{re}
For any $B\in\B$, we define $B^+$ and $\alpha_B:B\rightarrow B^+$ as follows:\\
Take two short exact sequences:
\begin{align*}
\xymatrix{V_B \ar@{ >->}[r] &U_B \ar@{->>}[r]^{u_B} &B},\quad
\xymatrix{U_B \ar@{ >->}[r]^{w'} &W^0 \ar@{->>}[r] &U^0}
\end{align*}
where $U_B,U^0 \in \U$, $W^0$,$\V_B\in \V$. In fact, $W^0\in \W$ since $\U$ is closed under extension. By Proposition \ref{7}, we get the following commutative diagram
\begin{equation}\label{F1}
$$\quad \quad \quad \quad \quad \quad \quad \quad \quad \quad \quad \quad \quad \quad \quad \quad \quad \quad\xymatrix@C=0.6cm@R0.6cm{
V_B \ar@{=}[d] \ar@{ >->}[r] &U_B \ar@{ >->}[d]_{w'} \ar@{->>}[r]^{u_B} &B \ar@{ >->}[d]^{\alpha_B}\\
V_B \ar@{ >->}[r]  &W^0 \ar@{->>}[r]_w \ar@{->>}[d] &B^+ \ar@{->>}[d]\\
&U^0 \ar@{=}[r] &U^0
}$$
\end{equation}
where the upper-right square is both a push-out and a pull-back.
\end{defn}

By definition, $B^+\in \B^+$. We recall the following useful proposition.

\begin{prop}\cite[{Lemma 3.2, }{Proposition 3.3}]{L}\label{8}
For any $B\in \B$
\begin{itemize}
\item[(a)] If $B\in \B^-$, then $B^+\in \h$.

\item[(b)] $\alpha_B$ is a left $\B^+$-approximation, and for an object $Y\in \B^+$, $\Hom_{\uB}(\underline {\alpha_B},Y):\Hom_{\uB}(B^+,Y)\rightarrow \Hom_{\uB}(B,Y)$ is bijective.
\end{itemize}
\end{prop}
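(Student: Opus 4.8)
I would argue as follows: part (a) by reducing it to an auxiliary claim, and part (b) directly, exploiting the fact that diagram (\ref{F1}) presents $B^+$ as a pushout. Throughout, write $\pi\colon B^+\twoheadrightarrow U^0$ for the deflation in the right-hand column of (\ref{F1}), so that $B\rightarrowtail B^+\twoheadrightarrow U^0$ is a short exact sequence with inflation $\alpha_B$ and $U^0\in\U$; recall also that $B^+\in\B^+$ holds unconditionally (noted after Definition \ref{re}), since the middle row of (\ref{F1}) is $V_B\rightarrowtail W^0\twoheadrightarrow B^+$ with $W^0\in\W$. I will use repeatedly that $\Ext^1_\B(\U,\V)=0$ and that $\U$ is closed under extensions and under direct summands.

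For (a), since $B^+\in\B^+$ always, it suffices to show $B^+\in\B^-$ when $B\in\B^-$, and the plan is to first isolate the auxiliary claim: \emph{if an object $X$ fits in a short exact sequence $X\rightarrowtail P\twoheadrightarrow U$ with $P,U\in\U$, then $X\in\B^-$.} To prove it, take a defining short exact sequence $X\rightarrowtail V^X\twoheadrightarrow U^X$ of the cotorsion pair and push the inflation $X\rightarrowtail P$ out along $X\rightarrowtail V^X$; by Proposition \ref{PO} this yields short exact sequences $V^X\rightarrowtail Q\twoheadrightarrow U$ and $P\rightarrowtail Q\twoheadrightarrow U^X$. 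The second forces $Q\in\U$ (extension closure), and the first splits because $\Ext^1_\B(U,V^X)=0$, so $V^X$ is a direct summand of $Q\in\U$; hence $V^X\in\U\cap\V=\W$, i.e.\ $X\in\B^-$. Now I apply the claim with $X=B^+$: as $B\in\B^-$ there is a short exact sequence $B\rightarrowtail V^B\twoheadrightarrow U^B$ of the cotorsion pair with $V^B\in\W$ and $U^B\in\U$, and pushing $\alpha_B$ out along $B\rightarrowtail V^B$ yields short exact sequences $B^+\rightarrowtail P\twoheadrightarrow U^B$ and $V^B\rightarrowtail P\twoheadrightarrow U^0$; the latter shows $P\in\U$. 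The former then has $P,U^B\in\U$, so the claim gives $B^+\in\B^-$, and therefore $B^+\in\h$.

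For (b), fix $Y\in\B^+$ and a short exact sequence $V_Y\rightarrowtail W_Y\xrightarrow{p}Y$ with $W_Y\in\W$ and $V_Y\in\V$. To see that $\alpha_B$ is a left $\B^+$-approximation, take $f\colon B\to Y$ and exploit the pushout inside (\ref{F1}): since $U_B\in\U$ and $V_Y\in\V$, the composite $fu_B$ (with $u_B\colon U_B\twoheadrightarrow B$ from (\ref{F1})) lifts along $p$ to some $\tilde g\colon U_B\to W_Y$; since $U^0\in\U$ and $W_Y\in\V$, the map $\tilde g$ extends along the inflation $w'\colon U_B\rightarrowtail W^0$ to some $\bar g\colon W^0\to W_Y$; as $fu_B=p\bar g w'$, the pushout property of the upper-right square of (\ref{F1}) produces $h\colon B^+\to Y$ with $h\alpha_B=f$. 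Hence $\Hom_\B(\alpha_B,Y)$, and a fortiori $\Hom_{\uB}(\underline{\alpha_B},Y)$, is surjective. For injectivity of $\Hom_{\uB}(\underline{\alpha_B},Y)$, take $h\colon B^+\to Y$ with $h\alpha_B$ factoring as $B\xrightarrow{a}W\xrightarrow{b}Y$ with $W\in\W$; extending $a$ along $\alpha_B$ to $\tilde a\colon B^+\to W$ (possible since $\Ext^1_\B(U^0,W)=0$), the difference $h-b\tilde a$ kills $\alpha_B$, hence factors as $c\pi$ for some $c\colon U^0\to Y$; lifting $c$ along $p$ to $\tilde c\colon U^0\to W_Y$ (possible since $\Ext^1_\B(U^0,V_Y)=0$) then exhibits $h=b\tilde a+p\tilde c\pi$ as a morphism factoring through $W\oplus W_Y\in\W$, so $\underline h=0$.

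The step I expect to demand the most care is the auxiliary claim in (a): pushing out once only places the relevant middle term of the new short exact sequence in $\U$, not in $\W$, so one must push out a second time against the $\V$-term $V^X$ and invoke $\Ext^1_\B(\U,\V)=0$ to split off $V^X$ --- it is this two-step passage that upgrades ``a short exact sequence into $\U$ with $\U$-cokernel'' to $V^X\in\W$ (the same ``upgrade from $\U$ to $\W$'' reappears in the injectivity argument for (b), where $c$ must be lifted through $W_Y$ rather than left as a map to $U^0$). Beyond this, the whole argument is powered by the single structural fact that (\ref{F1}) exhibits $B^+$ as a pushout with the $\W$-object $W^0$ in a corner --- which is exactly what lets the lifts $\tilde g,\bar g$ of (b) be glued into one $h\colon B^+\to Y$ --- together with the uniform availability of $\Ext^1_\B(\U,\V)=0$ for every obstruction encountered.
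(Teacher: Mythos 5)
Your argument is correct and self-contained. The paper itself gives no proof of this proposition --- it simply cites \cite[Lemma 3.2, Proposition 3.3]{L} --- so there is no in-paper argument to compare against; your write-up supplies the details that the paper delegates. Your route is the natural one. For (a), you reduce to the lemma that $X\in\B^-$ whenever there is a short exact sequence $X\rightarrowtail P\twoheadrightarrow U$ with $P,U\in\U$: pushing out against $X\rightarrowtail V^X$ and using Proposition \ref{PO} yields $P\rightarrowtail Q\twoheadrightarrow U^X$ (so $Q\in\U$ by extension closure) and $V^X\rightarrowtail Q\twoheadrightarrow U$ (which splits since $\Ext^1_\B(\U,\V)=0$), whence $V^X$ is a summand of $Q\in\U$ and hence lies in $\W$. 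You then apply this with $X=B^+$ after a second pushout of $\alpha_B$ along $B\rightarrowtail V^B$; both steps check out. For (b), you correctly exploit that the upper-right square of (\ref{F1}) is a pushout, gluing the lift $\tilde g$ (via $\Ext^1_\B(U_B,V_Y)=0$) and the extension $\bar g$ (via $\Ext^1_\B(U^0,W_Y)=0$) into the required $h$ with $h\alpha_B=f$; and the injectivity argument --- extend $a$ to $\tilde a$, factor $h-b\tilde a$ through the cokernel $\pi$, then lift $c$ through $W_Y$ --- does show $h$ factors through $W\oplus W_Y\in\W$. The only minor point worth spelling out is that $\B^-$ is defined via the existence of a cotorsion-pair sequence $X\rightarrowtail V^X\twoheadrightarrow U^X$ with $V^X\in\W$, and your auxiliary claim exhibits precisely such a witness, which is what membership requires.
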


By Proposition \ref{8}, we can define a functor $\sigma^+$ from $\uB$ to $\underline \B^+$ as follows:\\
For any object $B\in \B$, since all the ${B^+}$ are isomorphic to each other in $\uB$ by Proposition \ref{8}, we fix a $B^+$ for $B$. Let
\begin{align*}
\sigma^+:\uB \rightarrow \underline \B^+\\
B \mapsto B^+
\end{align*}
and for any morphism $\underline f:B\rightarrow C$, we define $\sigma^+(\underline f)$ as the unique morphism given by Proposition \ref{8}
$$\xymatrix{
B \ar[r]^{\underline f} \ar[d]_{\underline {\alpha_B}} &C \ar[d]^{\underline {\alpha_C}}\\
B^+ \ar@{.>}[r]_{\sigma^+(\underline f)} &C^+.
}
$$

Let $i^+:\uB^+\hookrightarrow \uB$ be the inclusion functor. Then $(\sigma^+,i^+)$ is an adjoint pair by Proposition \ref{8}.

\begin{prop}\label{8.2}
The functor $\sigma^+$ has the following properties:
\begin{itemize}
\item[(a)] $\sigma^+$ is an additive functor.
\item[(b)] $\sigma^+|_{\uB^+}=\id_{\uB^+}$.
\item[(c)] For any morphism $f:A\rightarrow B$, $\sigma^+(\underline f)=0$ in $\uB$ if and only if $f$ factors through $\U$. In particular, $\sigma^+(B)=0$ if and only if $B\in \underline \U$.
\end{itemize}
\end{prop}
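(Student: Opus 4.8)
The plan is to treat the three parts in order, leaning throughout on the adjunction $(\sigma^+, i^+)$ and on the explicit construction in Definition \ref{re}.

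For part (a), I would argue that additivity of $\sigma^+$ is essentially formal once we know it is a left adjoint to the additive inclusion $i^+$: left adjoints between additive categories preserve finite coproducts, and since $\uB$ and $\uB^+$ are additive, a functor preserving finite coproducts is additive. Concretely, given $B,C\in\B$, the object $B^+\oplus C^+$ together with $\underline{\alpha_B}\oplus\underline{\alpha_C}:B\oplus C\to B^+\oplus C^+$ satisfies the universal property that defines $(B\oplus C)^+$ (by Proposition \ref{8}, mapping into any $Y\in\B^+$ factors uniquely through it), so $(B\oplus C)^+\cong B^+\oplus C^+$ in $\uB$ compatibly, and one checks $\sigma^+(\underline f+\underline g)=\sigma^+(\underline f)+\sigma^+(\underline g)$ by the uniqueness clause in Proposition \ref{8}(b) applied to the defining square.

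For part (b), if $B\in\B^+$ I want to show $B^+\cong B$ via $\alpha_B$ in $\uB$. Here I would use Proposition \ref{8}(b): $\underline{\alpha_B}:B\to B^+$ is a left $\B^+$-approximation and $\Hom_{\uB}(\underline{\alpha_B},Y)$ is bijective for $Y\in\B^+$. Taking $Y=B$, there is $\underline g:B^+\to B$ with $\underline g\,\underline{\alpha_B}=\underline{\id_B}$; taking $Y=B^+$, the two maps $\underline{\alpha_B}\,\underline g$ and $\id_{B^+}$ both pull back along $\underline{\alpha_B}$ to $\underline{\alpha_B}$, so by injectivity they agree. Hence $\alpha_B$ is an isomorphism in $\uB$, and then the uniqueness in the definition of $\sigma^+$ on morphisms forces $\sigma^+|_{\uB^+}\cong\id_{\uB^+}$; I would fix the chosen $B^+$ to be $B$ itself for $B\in\B^+$ to make this an honest equality as stated.

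For part (c), the substantive direction is that $\sigma^+(\underline f)=0$ implies $f$ factors through $\U$. If $\sigma^+(\underline f)=0$, then from the defining square $\underline{\alpha_C}\,\underline f=\sigma^+(\underline f)\,\underline{\alpha_B}=0$ in $\uB$, i.e. $\alpha_C f$ factors through $\W\subseteq\U$. Now I would feed $f$ into the pull-back square from diagram \eqref{F1} for $C$: since $\alpha_C f$ factors through the deflation $w:W^0\twoheadrightarrow C^+$ restricted appropriately — more precisely, using that the upper-right square in \eqref{F1} is a pull-back, a morphism $A\to C$ whose composite with $\alpha_C$ lifts to $W^0$ yields a lift of $f$ itself to $U_C$ along $u_C:U_C\twoheadrightarrow C$ — we get that $f$ factors through $U_C\in\U$. (One must handle the "factors through $\W$" versus "lifts to $W^0$" gap: a map into $C^+$ that factors through an object of $\W$, composed with the deflation structure, still produces the needed lift because $W^0\in\W$ too and the relevant $\Ext^1$ vanishes.) The converse direction, that $f$ factoring through some $U\in\U$ forces $\sigma^+(\underline f)=0$, follows because $\sigma^+(\underline U)=0$: indeed $U\in\B^+$ with $U^+$ expressible so that $\alpha_U$ becomes zero in $\uB$ — alternatively, $\sigma^+(\underline U)$ maps to all $Y\in\B^+$ and $\Hom_{\uB}(U,Y)$ receives everything from approximations of $U$ by $\W$, but cleanest is: $\Ext^1_\B(\U,\V)=0$ gives that in \eqref{F1} for $U$ the sequence $V_U\rightarrowtail W^0\twoheadrightarrow U^+$ splits appropriately so $\underline{\alpha_U}=0$. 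The last sentence of (c) is the special case $f=\id_B$.

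The main obstacle I anticipate is the forward direction of (c): carefully converting the statement "$\alpha_C f$ factors through $\W$ in $\uB$" into "$f$ factors through $\U$ in $\B$" using the pull-back property of the square in \eqref{F1} together with the defining property of $\uB=\B/\W$, being careful that factoring through $\W$ (not literally lifting along $w$) is enough. I expect this to require one diagram chase combining Proposition \ref{PO}(c) (push-out = pull-back) with the short exact sequence $U_C\rightarrowtail W^0\twoheadrightarrow U^0$.
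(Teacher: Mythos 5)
Your proposal is correct and follows essentially the same route as the paper: parts (a) and (b) are the formal consequences of the adjunction and of Proposition \ref{8}(b), and for the substantive forward direction of (c) you use, exactly as the paper does, the fact that $w\colon W^0\twoheadrightarrow B^+$ is a right $\U$-approximation (since its kernel $V_B$ lies in $\V$) to upgrade "factors through $\W$" to a genuine lift through $W^0$, and then the pull-back property of the square in \eqref{F1} to produce the lift of $f$ through $U_B$. The only cosmetic difference is that you start from $\underline{\alpha_B f}=0$ whereas the paper starts from $\underline{f^+}=0$; these are interchangeable via Proposition \ref{8}(b).
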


\begin{proof}
(a), (b) can be concluded easily by definition.\\
(c) The "if" part is followed by \cite[{Lemma 3.4}]{L}.\\
Now suppose $\sigma^+(\underline f)=0$ in $\uB$. By Proposition \ref{8}, we have the following commutative diagram
$$\xymatrix@C=0.6cm@R0.6cm{
A \ar@{ >->}[d]_{\alpha_A} \ar[r]^f &B \ar@{ >->}[d]^{\alpha_B} &U_B \ar@{ >->}[d]^{w'} \ar@{->>}[l]_{u_B} &V_B \ar@{=}[d] \ar@{ >->}[l]\\
A^+ \ar@{->>}[d] \ar[r]_{f^+} &B^+ \ar@{->>}[d] &W^0 \ar@{->>}[d] \ar@{->>}[l]^w &V_B \ar@{ >->}[l]\\
U^0_A \ar[r] &U^0 \ar@{=}[r] &U^0
}$$
where $\underline f^+=\sigma^+(\underline f)$. Then $f^+$ factors through an object $W\in \W$.
$$\xymatrix@C=0.6cm@R0.6cm{
A^+ \ar[dr]_a \ar[rr]^{f^+} &&B^+\\
&W \ar[ur]_b
}$$
Since $w$ is a right $\U$-approximation of $B^+$, there exists a morphism $c:W\rightarrow W^0$ such that $b=wc$. Thus $\alpha_Bf=f^+\alpha_A=ba\alpha_A=w(ca\alpha_A)$. By the definition of pull-back, there exists a morphism $d:A\rightarrow U_B$ such that $f=u_Bd$. Thus $f$ factors through $\U$.
\end{proof}

\begin{defn}\label{10}
For any object $B\in \B$, we define $B^-$ and $\gamma_B:B^- \rightarrow B$ as follows:

Take the following two short exact sequences
\begin{align*}
\xymatrix{B \ar@{ >->}[r]^{v^B} &V^B \ar@{->>}[r] &U^B},\quad
\xymatrix{V_0 \ar@{ >->}[r] &W_0 \ar@{->>}[r] &V^B}
\end{align*}
where $V^B,V_0\in \V$, and $W_0$,$U^B\in \U$. Then $W_0\in \W$ holds since $\V$ is closed under extension. By Proposition \ref{7}, we get the following commutative diagram:
\begin{equation}\label{F2}
$$\quad \quad \quad \quad \quad \quad \quad \quad \quad \quad \quad \quad \quad \quad \quad \quad \quad \quad\xymatrix@C=0.6cm@R0.6cm{
V_0 \ar@{ >->}[d]_v \ar@{=}[r] &{V_0} \ar@{ >->}[d]\\
B^- \ar@{->>}[d]_{\gamma_B} \ar@{ >->}[r] &{W_0} \ar@{->>}[d] \ar@{->>}[r] &{U^B} \ar@{=}[d]\\
B \ar@{ >->}[r]_{v^B} &{V^B} \ar@{->>}[r] &{U^B}.}
$$
\end{equation}
\end{defn}

By definition $B^-\in \B^-$ and we have:

\begin{prop}\cite[{Proposition 3.6}]{L}\label{81}
For any object $B\in \B$
\begin{itemize}

\item[(a)] $B\in \B^+$ implies $B^- \in \h$.

\item[(b)] $\gamma_B$ is a right $\B^-$-approximation. For any $X\in \B^-$, $\Hom_{\uB}( X,\underline{\gamma_B}):\Hom_{\uB}(X,B^-)\rightarrow \Hom_{\uB}(X,B)$ is bijective.
\end{itemize}
\end{prop}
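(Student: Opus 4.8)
The plan is to obtain this proposition directly from Proposition~\ref{8} by passing to the opposite category. First I would record that $\B^{\op}$ is again a Krull--Schmidt exact category with enough projectives (namely $\mathcal I$) and enough injectives (namely $\mathcal P$), and that $(\V,\U)$ is a cotorsion pair on $\B^{\op}$: condition~(a) of Definition~\ref{2} holds since $\Ext^1_{\B^{\op}}(\V,\U)=\Ext^1_{\B}(\U,\V)=0$, and the two approximation sequences demanded by condition~(b) for $(\V,\U)$ on $\B^{\op}$ are precisely the two sequences of Definition~\ref{2} for $(\U,\V)$ on $\B$ read with all arrows reversed. Here $\V\cap\U=\W$, so the ideal quotient is unchanged and $\uB^{\op}=\B^{\op}/\W$; and comparing the defining conditions for $\B^{+}$ and $\B^{-}$ one sees that the subcategory $\B^{-}$ attached to $(\U,\V)$ on $\B$ equals the subcategory $\B^{+}$ attached to $(\V,\U)$ on $\B^{\op}$, so that $\h=\B^{+}\cap\B^{-}$ is the same heart for the two pairs.

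The crucial observation is then that Definition~\ref{10} is nothing but Definition~\ref{re} performed in $\B^{\op}$ for the cotorsion pair $(\V,\U)$. This is because Propositions~\ref{7}(a) and \ref{7}(b) are opposite statements (Proposition~\ref{7}(b) applied in $\B^{\op}$ is Proposition~\ref{7}(a) in $\B$, with push-outs and pull-backs interchanged), while Definition~\ref{re} is built from Proposition~\ref{7}(b) and Definition~\ref{10} from Proposition~\ref{7}(a): reversing every arrow in diagram~\eqref{F2} turns it into diagram~\eqref{F1} for $(\V,\U)$ on $\B^{\op}$, with the short exact sequence $B\rightarrowtail V^{B}\twoheadrightarrow U^{B}$ in the role of $V_{B}\rightarrowtail U_{B}\twoheadrightarrow B$, the sequence $V_{0}\rightarrowtail W_{0}\twoheadrightarrow V^{B}$ in the role of $U_{B}\rightarrowtail W^{0}\twoheadrightarrow U^{0}$, and $B^{-},\gamma_{B}$ in the roles of $B^{+},\alpha_{B}$. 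Granting this identification, part~(a) is exactly Proposition~\ref{8}(a) for $(\V,\U)$ on $\B^{\op}$, since ``$B\in\B^{-}$ implies $B^{+}\in\h$'' read in $\B^{\op}$ is ``$B\in\B^{+}$ implies $B^{-}\in\h$'' read in $\B$. For part~(b) I would use that a left $\B^{+}$-approximation in $\B^{\op}$ is the same thing as a right $\B^{-}$-approximation in $\B$, and that, under the identifications $\Hom_{\uB^{\op}}(-,X)=\Hom_{\uB}(X,-)$ and $\alpha_{B}\leftrightarrow\gamma_{B}$, the bijectivity of $\Hom_{\uB^{\op}}(\underline{\alpha_{B}},X)\colon\Hom_{\uB^{\op}}(B^{-},X)\to\Hom_{\uB^{\op}}(B,X)$ for $X\in\B^{+}$ supplied by Proposition~\ref{8}(b) becomes the bijectivity of $\Hom_{\uB}(X,\underline{\gamma_{B}})\colon\Hom_{\uB}(X,B^{-})\to\Hom_{\uB}(X,B)$ for $X\in\B^{-}$.

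The only real work is the bookkeeping in the previous paragraph: one has to reconcile the two naming conventions, the data $V_{B},U_{B},W^{0},U^{0}$ of Definition~\ref{re} against the data $V_{0},W_{0},V^{B},U^{B}$ of Definition~\ref{10}, and make sure that the opposite of the distinguished push-out/pull-back square of \eqref{F2} is the distinguished square of \eqref{F1}; this is where a transposition is easiest to get wrong, but it is entirely formal, and once it is pinned down everything else (exactness of the dualized rows and columns, that ``right approximation'' dualizes to ``left approximation'', etc.) is immediate. Alternatively one could reprove the statement directly in $\B$ by imitating the proof of Proposition~\ref{8} given in \cite{L}, working with the short exact sequences appearing in \eqref{F2}; the argument is line-by-line dual, but the passage to $\B^{\op}$ is the most economical route and reproves nothing.
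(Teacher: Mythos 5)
Your proof is correct and is essentially the intended argument: the paper cites this result from \cite{L} and explicitly introduces $\sigma^-$ as ``the dual of $\sigma^+$,'' so the statement is understood to follow from Proposition~\ref{8} by the formal duality you spell out via $\B^{\op}$. Your bookkeeping of how $(\V,\U)$ on $\B^{\op}$, Definition~\ref{10}, and the roles of $\alpha_B$ and $\gamma_B$ line up is accurate, and the translation of ``left $\B^+$-approximation'' in $\B^{\op}$ to ``right $\B^-$-approximation'' in $\B$ is exactly right.
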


We define a functor $\sigma^-$ from $\uB$ to $\underline \B^-$ as the dual of $\sigma^+$:
\begin{align*}
\sigma^-:\uB \rightarrow \underline \B^-\\
B \mapsto B^-.
\end{align*}
For any morphism $\underline f:B\rightarrow C$, we define $\sigma^-(\underline f)$ as the unique morphism given by Proposition \ref{81}
$$\xymatrix{
B^- \ar@{.>}[r]^{\sigma^-(\underline f)} \ar[d]_{\underline {\gamma_B}} &C^- \ar[d]^{\underline {\gamma_C}}\\
B \ar[r]_{\underline f} &C.
}
$$

Let $i^-:\uB^-\hookrightarrow \uB$ be the inclusion functor, then $(i^-,\sigma^-)$ is an adjoint pair by Proposition \ref{81}.

\begin{prop}\label{8.3}
The functor $\sigma^-$ has the following properties:
\begin{itemize}
\item[(a)] $\sigma^-$ is an additive functor.
\item[(b)] $\sigma^-|_{\uB^-}=\id_{\uB^-}$.
\item[(c)] For any morphism $f:A\rightarrow B$, $\sigma^-(\underline f)=0$ in $\uB$ if and only if $f$ factors through $\V$. In particular, $\sigma^-(B)=0$ if and only if $B\in \underline \V$.
\end{itemize}
\end{prop}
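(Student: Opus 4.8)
The plan is to prove Proposition \ref{8.3} by dualizing the proof of Proposition \ref{8.2}. Since $\sigma^-$ is defined precisely as the dual construction of $\sigma^+$ (replacing the cotorsion pair $(\U,\V)$ by $(\V,\U)$ in the opposite category $\B^{\op}$, which is again a Krull--Schmidt exact category with enough projectives and injectives, and for which $\B^-$ plays the role of $\B^+$), every assertion about $\sigma^+$ transports directly. Concretely, parts (a) and (b) follow immediately from the definition of $\sigma^-$ on objects and morphisms together with the fact that $\gamma_B$ can be chosen to be an isomorphism (indeed an identity after fixing representatives) when $B\in\B^-$, exactly as in Proposition \ref{8.2}(a),(b).

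For part (c), the "if" direction is the dual of \cite[{Lemma 3.4}]{L}: if $f$ factors through an object of $\V$, then since $\V\subseteq\B^-$ (every object of $\V$ lies in $\B^-$, as $V^V\in\W$ can be arranged), one checks that $\sigma^-(\underline f)=0$. For the "only if" direction, I would reproduce the diagram chase of Proposition \ref{8.2}(c) in dual form. Assuming $\sigma^-(\underline f)=0$ for $f\colon A\to B$, the defining commutative diagram of $\sigma^-$ (built from the diagrams \eqref{F2} for $A$ and $B$) gives a morphism $f^-\colon A^-\to B^-$ with $\underline{f^-}=\sigma^-(\underline f)$ and $\gamma_B f^- = f\gamma_A$. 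Since $\underline{f^-}=0$, the morphism $f^-$ factors through some $W\in\W$, say $f^-=ba$ with $a\colon A^-\to W$, $b\colon W\to B^-$. Using that the inflation $v\colon B^-\rightarrowtail W_0$ (the left column of \eqref{F2} for $B$) is a left $\V$-approximation of $B^-$ — dually to $w$ being a right $\U$-approximation in Proposition \ref{8.2}(c) — there is $c\colon W_0\to W$ with... more precisely one factors $b$ through $W_0$, obtaining a lift showing $f\gamma_A = \gamma_B f^-$ factors appropriately; then the pull-back/push-out property of the middle square of \eqref{F2} for $B$ yields a morphism $d\colon A\to V^B$ (or into the relevant $\V$-object) through which $f$ factors, so $f$ factors through $\V$. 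The special case $\sigma^-(B)=0\iff B\in\underline\V$ is then the instance $f=\id_B$.

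I do not expect any genuine obstacle here: the entire content is a mechanical dualization, and the only mild care needed is to get the variances right — $\sigma^+$ arises from a right adjoint situation with approximations on the target side, while $\sigma^-$ arises from a left adjoint situation, so "right $\U$-approximation" becomes "left $\V$-approximation" and inflations/deflations and push-outs/pull-backs in \eqref{F1} swap roles in \eqref{F2}. The cleanest write-up is simply to state that (a) and (b) are clear from the definition, cite the dual of \cite[{Lemma 3.4}]{L} for the "if" part of (c), and then either refer to the proof of Proposition \ref{8.2}(c) "by duality" or spell out the dual diagram chase in a few lines as sketched above.
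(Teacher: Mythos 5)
The paper gives no proof of Proposition \ref{8.3}: it simply defines $\sigma^-$ as the dual of $\sigma^+$ and states the result as the dual of Proposition \ref{8.2}. Your proposal follows the same route (duality), so the approach matches. One small point worth correcting in your sketch of (c): when you pass to the dual of the diagram chase in Proposition \ref{8.2}(c), the roles of source and target swap. In the $\sigma^+$ argument one uses the right $\U$-approximation $w\colon W^0\twoheadrightarrow B^+$ attached to the \emph{target} $B$ and then the pull-back square of \eqref{F1} for $B$; dually, for $\sigma^-$ one must use the left $\V$-approximation $A^-\rightarrowtail W_0$ attached to the \emph{source} $A$ (factor $a\colon A^-\to W$ through it, not $b$) and then the push-out square of \eqref{F2} for $A$, which produces $d\colon V^A\to B$ with $f=dv^A$. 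Your sketch mistakenly uses the $B$-diagram and speaks of a left $\V$-approximation of $B^-$; with that fixed, the dualization goes through exactly as you anticipate.
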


\section{Reflection sequences and coreflection sequences}
In the following two sections we fix a cotorsion pair $(\U,\V)$. In \cite{AN}, the reflection (resp. coreflection) sequences are introduced on triangulated categories,  We will define the similar sequences on exact categories.

Let $\mathcal C$ be a subcategory of $\B$, denote by $\Omega \mathcal C$ (resp. $\Omega^- \mathcal C$) the subcategory of $\B$ consisting of objects $\Omega C$ (resp. $\Omega^- C$) such that there exists a short exact sequence
$$\Omega C\rightarrowtail P_C\twoheadrightarrow C \text{ } (P_C\in \mathcal P, C\in \mathcal C) \text{ }
(\text{resp. } C\rightarrowtail I^C\twoheadrightarrow \Omega^- C \text{ } (I^C\in \mathcal I, C\in \mathcal C))$$


\begin{lem}\label{8.00}
$\Omega \U\subseteq \B^-$ and $\Omega^- \V\subseteq \B^+$.
\end{lem}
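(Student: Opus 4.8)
The plan is to prove $\Omega\U\subseteq\B^-$; the inclusion $\Omega^-\V\subseteq\B^+$ then follows by the dual argument (swapping the roles of $\U$ and $\V$, projectives and injectives, and the two defining short exact sequences of Definition \ref{2}(b)). Recall that $X\in\B^-$ means that in the short exact sequence $X\rightarrowtail V^X\twoheadrightarrow U^X$ coming from the cotorsion pair, one has $V^X\in\W=\U\cap\V$. So fix $U\in\U$, take a short exact sequence $\Omega U\rightarrowtail P\twoheadrightarrow U$ with $P\in\mathcal P$, and I must produce a conflation $\Omega U\rightarrowtail W\twoheadrightarrow U'$ with $W\in\W$ and $U'\in\U$.

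The key observation is that $P\in\mathcal P\subseteq\U$ (projectives lie in $\U$ since for $P$ projective the sequence $V_P\rightarrowtail U_P\twoheadrightarrow P$ splits, so $P$ is a summand of $U_P\in\U$, and $\U$ is closed under summands — alternatively this is standard and can be cited from \cite{L}). Now start from the conflation $\Omega U\rightarrowtail P\twoheadrightarrow U$ and from a conflation $\Omega U\rightarrowtail V^{\Omega U}\twoheadrightarrow U^{\Omega U}$ given by Definition \ref{2}(b), with $V^{\Omega U}\in\V$ and $U^{\Omega U}\in\U$. Apply Proposition \ref{7}(b) to these two conflations sharing the common object $\Omega U$: we obtain a commutative diagram of conflations
$$\xymatrix{
\Omega U \ar@{=}[d] \ar@{ >->}[r] &P \ar@{ >->}[d] \ar@{->>}[r] &U \ar@{ >->}[d]\\
\Omega U \ar@{ >->}[r] &V^{\Omega U} \ar@{->>}[d] \ar@{->>}[r] &R \ar@{->>}[d]\\
&U^{\Omega U} \ar@{=}[r] &U^{\Omega U}
}$$
wait — I need the first conflation in the top row and the second along the left column, so the roles are: the middle column $P\rightarrowtail V^{\Omega U}$ is an inflation with cokernel $U^{\Omega U}$, and the middle row $V^{\Omega U}\twoheadrightarrow R$ has kernel... let me instead apply Proposition \ref{7}(a) with $X=\Omega U$, and then read off what is needed. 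In any case, from such a diagram the new object $R$ sits in a conflation $U\rightarrowtail R\twoheadrightarrow U^{\Omega U}$ with $U,U^{\Omega U}\in\U$, hence $R\in\U$ since $\U$ is closed under extensions; and $R$ also sits in the conflation $\Omega U\rightarrowtail V^{\Omega U}\twoheadrightarrow R$ coming from the other direction of the diagram, where actually we want a conflation of the form $\Omega U\rightarrowtail (\text{something in }\W)\twoheadrightarrow(\text{something in }\U)$.

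The cleanest route, which I would adopt, is: since $P\in\U$ and $\Omega U\rightarrowtail P\twoheadrightarrow U$ is a conflation, embed $P$ into $\W$ using a conflation $P\rightarrowtail W^0\twoheadrightarrow U^0$ with $W^0\in\W$, $U^0\in\U$ — this is exactly the second sequence used in Definition \ref{re}, available because $P\in\U$. Then form the pushout of $\Omega U\rightarrowtail P$ along $P\rightarrowtail W^0$, i.e. apply Proposition \ref{7}(b) (or \ref{PO}) to the pair $(\Omega U\rightarrowtail P\twoheadrightarrow U)$ and $(P\rightarrowtail W^0\twoheadrightarrow U^0)$. This gives a commutative diagram with a conflation $\Omega U\rightarrowtail W^0\twoheadrightarrow R$ and a conflation $U\rightarrowtail R\twoheadrightarrow U^0$. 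Since $U,U^0\in\U$ and $\U$ is extension-closed, $R\in\U$; since $W^0\in\W$, the conflation $\Omega U\rightarrowtail W^0\twoheadrightarrow R$ witnesses (up to identifying it with the cotorsion-pair conflation, which is fine as $\Ext^1_\B(R,\,-)$ kills nothing we need — more precisely $W^0\rightarrowtail$ part: we need $V^{\Omega U}\in\W$, and $W^0$ together with the fact that the conflation $\Omega U\rightarrowtail W^0\twoheadrightarrow R$ has $R\in\U$ and $W^0\in\W\subseteq\V$ shows $\Omega U\in\B^-$ directly, since one valid choice of the defining sequence for $\Omega U$ is this one, as $\Ext^1_\B(R,V_0)=0$ for the relevant $V_0$). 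Thus $\Omega U\in\B^-$. The main obstacle is purely bookkeeping: making sure the pushout/pullback diagram from Proposition \ref{7} is oriented correctly so that the object landing in $\W$ is the middle term of a conflation with $\Omega U$ as sub and something in $\U$ as quotient, and confirming that this particular conflation is an admissible choice of the $\B^-$-defining sequence (equivalently, that it is isomorphic to $\Omega U\rightarrowtail V^{\Omega U}\twoheadrightarrow U^{\Omega U}$ in the relevant sense) — this is where one must invoke that all such sequences are equivalent up to the machinery already set up in \cite{L} and Definition \ref{re}.
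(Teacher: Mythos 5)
Your argument is essentially correct, but it takes a different route from the paper's. The paper keeps the given cotorsion-pair conflation $\Omega U\rightarrowtail V^{\Omega U}\twoheadrightarrow U^{\Omega U}$ and shows directly that $V^{\Omega U}\in\U$ (hence $\in\W$): since $\Ext^1_\B(U,V^{\Omega U})=0$, the inflation $v':\Omega U\rightarrowtail V^{\Omega U}$ lifts through $q:\Omega U\rightarrowtail P_U$, giving a morphism of conflations with identity on the kernel, whence a conflation $P_U\rightarrowtail V^{\Omega U}\oplus U\twoheadrightarrow U^{\Omega U}$; as $\U$ is closed under extensions and summands, $V^{\Omega U}\in\U$. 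You instead manufacture a \emph{fresh} witness conflation with middle term in $\W$: observing $P\in\mathcal P\subseteq\U$, you take the resolution $P\rightarrowtail W^0\twoheadrightarrow U^0$ (with $W^0\in\W$, $U^0\in\U$) and splice it with $\Omega U\rightarrowtail P\twoheadrightarrow U$ via Proposition \ref{7}(b) to get $\Omega U\rightarrowtail W^0\twoheadrightarrow R$ and $U\rightarrowtail R\twoheadrightarrow U^0$, so $R\in\U$ by extension-closure and $W^0\in\W$. Both routes then rely on the same implicit fact (established in \cite{L}) that a single cotorsion-pair conflation $\Omega U\rightarrowtail V\twoheadrightarrow U'$ with $V\in\W$, $U'\in\U$ suffices to conclude $\Omega U\in\B^-$. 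Your route 2 is slightly longer (it introduces an extra conflation) but makes the witness object explicit; the paper's route is shorter and stays within the data already at hand. Your abandoned route 1 was actually salvageable and would have reproduced the paper's proof: form the pushout of $\Omega U\rightarrowtail P$ and $\Omega U\rightarrowtail V^{\Omega U}$; the resulting conflation $V^{\Omega U}\rightarrowtail Q\twoheadrightarrow U$ splits since $\Ext^1_\B(U,V^{\Omega U})=0$, so $Q\cong V^{\Omega U}\oplus U$, and the other conflation $P\rightarrowtail Q\twoheadrightarrow U^{\Omega U}$ then forces $V^{\Omega U}\in\U$. The only real shortcoming of your write-up is presentational: it reads as an exploration rather than a proof, and the final disclaimer about ``bookkeeping'' should be replaced by an actual check that Proposition \ref{7}(b) is being applied in the stated orientation.
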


\begin{proof}
We only prove the first one, the second is dual.\\
For any $U\in \U$, we consider two short exact sequences
$$\xymatrix{
\Omega U \ar@{ >->}[r]^q &P_{U} \ar@{->>}[r] &U,\\
} \quad \xymatrix{
\Omega U \ar@{ >->}[r]^{v'} &V^{\Omega U} \ar@{->>}[r] &U^{\Omega U}
}
$$
with $P_U\in \mathcal P$, $U^{\Omega U}\in \U$ and $V^{\Omega U}\in \V$. Since $\Ext^1_\B(U,V^{\Omega U})=0$, there exists a morphism $p:P_U\rightarrow V^{\Omega U}$ such that $pq=v'$, hence we get a commutative diagram
$$\xymatrix@C=0.6cm@R0.6cm{
\Omega U \ar@{ >->}[r]^q \ar@{=}[d] &P_U \ar[d]^p \ar@{->>}[r] &U \ar[d]\\
\Omega U \ar@{ >->}[r]_{v'} &V^{\Omega U} \ar@{->>}[r] &U^{\Omega U}
}
$$
which induces a short exact sequence $\xymatrix{P_U \ar@{ >->}[r] &V^{\Omega U}\oplus U \ar@{->>}[r] &{U.}^{\Omega U}}$ Since $\U$ is closed under extension and direct summands, $V^{\Omega U}\in \U$. Thus $\Omega U\in \B^-$.
\end{proof}

By this lemma and \cite[Lemma 2.10,2.11]{L}, we get the following Corollary.

\begin{cor}
\begin{itemize}
\item[(a)] In the short exact sequence $\Omega U\rightarrowtail P_U\twoheadrightarrow U$ where $U\in \U$ and $P_U\in \mathcal P$, we have $P_U\in\W$.

\item[(b)] In the short exact sequence $V\rightarrowtail I^V\twoheadrightarrow \Omega^- V$ where $V\in \V$ and $I^V\in \mathcal I$, we have $I^V\in \W$.
\end{itemize}
\end{cor}



\begin{defn}\label{8.4}
Let $B$ be any object in $\B$.
\begin{itemize}
\item[(a)] A \emph{reflection sequence} for $B$ is a short exact sequence
$$\xymatrix{B \ar@{ >->}[r]^{z} &Z \ar@{->>}[r] &U}$$
where $U\in \U$, $Z\in \B^+$ and there exists a commutative diagram
$$\xymatrix@C=0.6cm@R0.6cm{
\Omega U \ar@{ >->}[r]^q \ar[d]_x &P_U \ar@{ ->>}[r] \ar[d]^p &U \ar@{=}[d]\\
B \ar@{ >->}[r]_z &Z \ar@{->>}[r] &U}
$$
with $P_U\in \mathcal P$ and $x$ factoring through $\U$.

\item[(b)] A \emph{coreflection sequence} for $B$ is a short exact sequence
$$\xymatrix{V \ar@{ >->}[r] &K \ar@{->>}[r]^{k} &B}$$
where $V\in \V$, $K\in \B^-$ and there exists a commutative diagram
$$\xymatrix@C=0.6cm@R0.6cm{
V \ar@{ >->}[r] \ar@{=}[d] &K \ar@{->>}[r]^k \ar[d] &B \ar[d]^y\\
V \ar@{ >->}[r] &I^V \ar@{->>}[r] &\Omega^-V}
$$
with $I^V\in \mathcal I$ and $y$ factoring through $\V$.
\end{itemize}
\end{defn}

\begin{lem}\label{8.5}
Let $B$ be an object in $\B$. Then
\begin{itemize}
\item[(a)] The short exact sequence $\xymatrix{B \ar@{ >->}[r]^{\alpha_B} &B^+ \ar@{->>}[r] &U^0}$ in (2.1) is a reflection sequence for $B$.

\item[(b)] The short exact sequence $\xymatrix{V_0 \ar@{ >->}[r] &B^- \ar@{->>}[r]^{\gamma_B} &B}$ in (2.2) is a coreflection sequence for $B$.

\item[(c)] For any reflection sequence $\xymatrix{B \ar@{ >->}[r]^{z} &Z \ar@{->>}[r] &U}$ for $B$, we have $Z\simeq B^+$ in $\uB$.

\item[(d)] For any coreflection sequence $\xymatrix{V \ar@{ >->}[r] &K \ar@{->>}[r]^{k} &B}$ for $B$, we have $K\simeq B^-$ in $\uB$.


\end{itemize}
\end{lem}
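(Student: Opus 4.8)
The plan is to prove the four parts more or less in the order (a), (b), (c), (d), treating (b) as dual to (a) and (d) as dual to (c), so the real content is parts (a) and (c). For part (a), I would verify directly that the short exact sequence $\xymatrix{B \ar@{>->}[r]^{\alpha_B} &B^+ \ar@{->>}[r] &U^0}$ from diagram (2.1) satisfies the requirements of Definition \ref{8.4}(a): we already know $B^+\in\B^+$ and $U^0\in\U$ by construction, so all that remains is to produce the comparison diagram with a projective presentation $\xymatrix{\Omega U^0 \ar@{>->}[r]^-{q} &P_{U^0} \ar@{->>}[r] &U^0}$ and a map $x:\Omega U^0\to B$ factoring through $\U$. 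Since $P_{U^0}\in\mathcal P$ and the bottom row $V_B\rightarrowtail W^0\twoheadrightarrow U^0$ (the middle row of (2.1)) is a short exact sequence ending in $U^0$, projectivity of $P_{U^0}$ lifts $\id_{U^0}$ to a map $p:P_{U^0}\to W^0$, which restricts to a map $\Omega U^0\to V_B$; composing with $V_B\rightarrowtail W^0\twoheadrightarrow B^+$ and using the pull-back square of (2.1) gives $x:\Omega U^0\to B$. The key point is that $x$ factors through $V_B\in\V$, but we need it to factor through $\U$; this is where I expect to invoke that $V_B$, being the kernel term of $V_B\rightarrowtail W^0\twoheadrightarrow U^0$ with $W^0\in\W\subseteq\U$ and $U^0\in\U$, together with $\Ext^1_\B(U^0,\U)=0$ is not quite enough — rather, one should observe $x$ factors through $W^0\in\W\subseteq\U$ after adjusting by the lifting, so $x$ factors through $\U$. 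Part (b) is the formal dual, using diagram (2.2), the injective copresentation of $V_0$, and injectivity of $I^{V_0}$.

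For part (c), suppose $\xymatrix{B \ar@{>->}[r]^{z} &Z \ar@{->>}[r] &U}$ is an arbitrary reflection sequence for $B$, with the accompanying diagram exhibiting $x:\Omega U\to B$ factoring through $\U$. The strategy is to show $z$ has the universal property of $\alpha_B$, i.e.\ that $\underline z$ is a left $\B^+$-approximation inducing the same bijection as in Proposition \ref{8}(b); then $Z\simeq B^+$ in $\uB$ follows from uniqueness of approximations up to isomorphism in the stable category. By Proposition \ref{8}(b), $\alpha_B$ is a left $\B^+$-approximation, so there is $g:B^+\to Z$ with $\underline{g}\,\underline{\alpha_B}=\underline z$; conversely, since $Z\in\B^+$, applying $\sigma^+$ (or directly the approximation property) gives $g':Z\to B^+$ with $\underline{g'}\underline z = \underline{\alpha_B}$. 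To see $g$ and $g'$ are mutually inverse in $\uB$, I would use that $\underline{g'g}$ and $\underline{g g'}$ both fix the relevant approximation maps and then cancel using the bijectivity statements in Proposition \ref{8}(b) (for $g'g$, using that $B^+\in\B^+$) and the analogous fact that $\Hom_{\uB}(\underline z, Z)\colon\Hom_{\uB}(Z,Z)\to\Hom_{\uB}(B,Z)$ is injective — this injectivity is precisely what the "reflection sequence" hypothesis buys us: the connecting map $x$ factoring through $\U$ forces any map $Z\to Z'\in\B^+$ that kills $z$ to vanish in $\uB$, via the long exact sequence coming from $\Omega U\xrightarrow{x} B\xrightarrow{z} Z\twoheadrightarrow U$ and $\Hom_{\uB}(\U, \B^+)=0$ (from Proposition \ref{8.2}(c)). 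Part (d) is again the dual, using Proposition \ref{81} and Proposition \ref{8.3}(c).

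The main obstacle I anticipate is the injectivity/surjectivity bookkeeping in part (c): translating "$x$ factors through $\U$" into the statement "$\Hom_{\uB}(\underline z, Y)$ is bijective for $Y\in\B^+$" requires a careful Hom-exact-sequence argument in the stable category $\uB$, where one must check that precomposition with $\underline z$ is injective (needs $\Hom_{\uB}(U,Y)\to\Hom_{\uB}(Z,Y)$ has the right behavior, i.e.\ that the relevant composite with $x$ vanishes) and surjective (needs every $\underline b:B\to Y$ extends over $z$, which follows since $Y\in\B^+$ and $z$'s cokernel $U\in\U$ with $\Hom_{\uB}(\U,Y)=0$ does not obstruct, combined with the diagram involving $x$). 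Once that bijection is in hand, comparing with Proposition \ref{8}(b) and invoking uniqueness of the representing object $B^+$ in $\uB$ closes parts (c) and (d) quickly. I would also double-check the degenerate cases where $B\in\B^+$ already (so $B^+\simeq B$) to make sure the argument does not secretly assume $B^+\neq B$.
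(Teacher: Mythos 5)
There is a genuine gap in your treatment of part (c), and it is exactly at the load-bearing point. You need a morphism $g':Z\to B^+$ (the paper's $f$) with $\underline{g'}\,\underline z=\underline{\alpha_B}$, and you claim this comes from ``applying $\sigma^+$ (or directly the approximation property).'' Neither of these produces such a map. Applying $\sigma^+$ to $\underline z$ gives a map $B^+\to Z$ (your $g$), not a map the other way. And the approximation property of $\alpha_B$ from Proposition~\ref{8}(b) tells you that morphisms $B\to Y$ with $Y\in\B^+$ factor through $\alpha_B$; it says nothing about factoring $\alpha_B$ through $z$ --- that would be the approximation property of $z$, which is what you are trying to establish. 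Lifting $\alpha_B$ over $z$ amounts to lifting over the cokernel $U$ of $z$, but $\Ext^1_\B(U,B^+)$ has no reason to vanish for a general $U\in\U$ and $B^+\in\B^+$. This is precisely where the reflection-sequence data must be used: because $x$ factors through $\U$, one writes $x=u_Bx'$, lifts $w'x'$ along $P_U\twoheadrightarrow U$ (using $\Ext^1_\B(U,W^0)=0$) to $p':P_U\to W^0$, observes $\alpha_B x = wp'q$, and then the push-out property of the left square of the comparison diagram (Proposition~\ref{PO}) hands you $f:Z\to B^+$ with $fz=\alpha_B$. You have diagnosed the role of the reflection hypothesis (for the injectivity side) but missed that its primary use is to build this comparison morphism; without it the argument does not get off the ground.

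Two smaller points. In part (a) your description of the factorization of $x$ wobbles between $V_B$, $W^0$, and $\U$: the clean statement, which is what the paper proves, is that $\alpha_Bx_0 = wp_0'q_0$ forces $x_0$ to factor through $U_B\in\U$ by the pull-back property of the upper-right square of (2.1); a factorization through $W^0$ in the sense you write is not available because there is no map $W^0\to B$. And for the vanishing $\Hom_{\uB}(\U,\B^+)=0$ that you invoke at the end, the right reference is Lemma~\ref{1}, not Proposition~\ref{8.2}(c); the latter gives $\sigma^+(\underline f)=0$ iff $f$ factors through $\U$, which is tautological when the source already lies in $\U$ and does not by itself give $\underline f=0$.
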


\begin{proof}
We only prove (a) and (c), the other two are dual.\\
(a) Since $U^0$ admits the following short exact sequence
$\xymatrix{\Omega U^0 \ar@{ >->}[r]^{q_0} &P_{U^0} \ar@{->>}[r] &U^0}$,
we get the following commutative diagram
$$\xymatrix@C=0.6cm@R0.6cm{
\Omega U^0 \ar@{ >->}[r]^{q_0} \ar[d]_{x_0} &P_{U^0} \ar@{->>}[r] \ar[d]^{p_0} &U^0 \ar@{=}[d]\\
B \ar@{ >->}[r]_{\alpha_B} &B^+ \ar@{->>}[r] &U^0.
}
$$
Since $P_{U^0}$ is projective, there exists a morphism $p'_0:P_{U^0}\rightarrow W^0$ such that $wp'_0=p_0$, we get $\alpha_Bx_0=p_0q_0=wp'_0q_0$. Then $x_0$ factors through $U_B\in \U$ since (2.1) is a pull-back diagram.
$$\xymatrix{
&\Omega U^0 \ar@{.>}[ddl] \ar@{ >->}[r]^{q_0} \ar[d]^{x_0} &P_{U^0} \ar[ddl]_(.25){p_0'} \ar@{ ->>}[r] \ar[d]^{p_0} &U^0 \ar@{=}[d]\\
&B \ar@{ >->}[r]^(.35){\alpha_B} &B^+ \ar@{->>}[r] &U^0\\
U_B \ar@{ >->}[r]_{w'} \ar[ur]_{u_B} &W^0 \ar[ur]_w \ar@{->>}[r] &U^0 \ar@{=}[ur]
}
$$
Hence by definition $\xymatrix{B \ar@{ >->}[r]^{\alpha_B} &B^+ \ar@{->>}[r] &U^0}$ is a reflection sequence for $B$.\\
(c) We first show that there exists a morphism $\underline f: Z\rightarrow B^+$ such that $\alpha_B=fz$.\\
The reflection sequence admits a commutative diagram
$$\xymatrix@C=0.6cm@R0.6cm{
\Omega U \ar@{ >->}[r]^q \ar[d]_x &P_U \ar@{ ->>}[r] \ar[d]^p &U \ar@{=}[d]\\
B \ar@{ >->}[r]_z &Z \ar@{->>}[r] &U}
$$
where the left square is a push-out by Proposition 2.1. Since $x$ factors through $\U$, and $u_B$ is a right $\U$-approximation of $B$, there exists a morphism $x':\Omega U\rightarrow U_B$ such that $x=u_Bx'$.
Since $\Ext^1_\B(U,W^0)=0$, there exists a morphism $p':P_U\rightarrow W^0$ such that $w'x'=p'q$, thus $\alpha_Bx=\alpha_Bu_Bx'=ww'x'=wp'q$. Then by the definition of push-out, there exists a morphism $f:Z\rightarrow B^+$ such that $\alpha_B=fz$.
$$\xymatrix{
&\Omega U \ar[dddl]_{x'} \ar@{ >->}[r]^q \ar[d]^x &P_U \ar[dddl]^{p'} \ar@{ ->>}[r] \ar[d]^p &U \ar@{=}[d]\\
&B \ar@{ >->}[r]^z \ar@{=}[d] &Z \ar@{->>}[r]^a \ar@{.>}[d]^f &U\\
&B \ar@{ >->}[r]_{\alpha_B} &B^+ \ar@{->>}[r] &U^0\\
U_B \ar@{ >->}[r]_{w'} \ar[ur]_{u_B} &W^0 \ar[ur]_w \ar@{->>}[r] &U^0 \ar@{=}[ur]
}
$$
By Proposition \ref{8}, there is a morphism $g:B^+\rightarrow Z$ such that $g\alpha_B=z$, we have a morphism $\underline {fg}:B^+\rightarrow B^+$ such that $\underline {fg\alpha}=\underline \alpha$, which implies that $\underline {fg}=\underline \id_{B^+}$. \\
Now we prove that $\underline {gf}=\underline \id_Z$.\\
Since $(gf-\id_Z)z=0$, we get a morphism $b:U\rightarrow B^+$ such that $gf-\id_Z=ba$. Since $\Ext^1_\B(U,V_B)=0$, $b$ factors through $W^0$, hence $\underline {gf}=\underline \id_Z$.\\
Thus $B^+\simeq Z$ in $\uB$.
\end{proof}







\begin{prop}\label{8.6}
There exists an isomorphism of functors from $\uB$ to $\underline \h$
$$\eta:\sigma^+ \circ \sigma^-\xrightarrow{\simeq}\sigma^- \circ \sigma^+.$$
\end{prop}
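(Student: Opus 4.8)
The goal is to produce, for each object $B\in\B$, an isomorphism $\eta_B\colon (B^-)^+\xrightarrow{\simeq}(B^+)^-$ in $\uB$ that lies in $\underline\h$ and is natural in $B$. The plan is to build $\eta_B$ by exploiting the universal properties of $\sigma^+$ and $\sigma^-$ recorded in Propositions \ref{8} and \ref{81}, together with the reflection/coreflection sequence machinery of Lemma \ref{8.5}. First I would note that by Lemma \ref{8.00}, $\Omega\U\subseteq\B^-$ and $\Omega^-\V\subseteq\B^+$, and by Proposition \ref{8}(a) and Proposition \ref{81}(a) the objects $(B^-)^+$ and $(B^+)^-$ both lie in $\h$ (since $B^-\in\B^-$ gives $(B^-)^+\in\h$, and $B^+\in\B^+$ gives $(B^+)^-\in\h$). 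So the statement even makes sense as a claim about functors landing in $\underline\h$.

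The key construction: start from the coreflection sequence $V_0\rightarrowtail B^-\twoheadrightarrow B$ of (2.2) and apply $\sigma^+$. I would argue that applying $\sigma^+$ to this sequence, and using that $\sigma^+(V_0)=0$ since $V_0\in\V\subseteq$ (one needs $\sigma^+$ kills $\V$ — actually $\sigma^+$ kills $\U$, so instead I use that $\sigma^+$ applied to the \emph{whole} coreflection diagram together with $\alpha$'s gives a comparison), yields a morphism $(B^-)^+\to B^+$ which, combined with $\sigma^-$ of the reflection sequence $B\rightarrowtail B^+\twoheadrightarrow U^0$ (where $\sigma^-(U^0)$ need not vanish but $\sigma^-$ kills $\V$ and $U^0\in\U$, so $\Omega^- U^0\in\B^+$...), produces the desired map. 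More concretely, the clean route is: the composite $\underline\alpha_{B^-}\colon B^-\to (B^-)^+$ and $\underline\gamma_{B^+}\colon (B^+)^-\to B^+$; by Proposition \ref{8}(b) applied to the object $(B^+)^-\in\B^+$ (it lies in $\h\subseteq\B^+$) the map $\Hom_{\uB}((B^-)^+,(B^+)^-)\to\Hom_{\uB}(B^-,(B^+)^-)$ is bijective, and similarly Proposition \ref{81}(b) applied to $(B^-)^+\in\B^-$ gives bijectivity of $\Hom_{\uB}((B^-)^+,(B^+)^-)\to\Hom_{\uB}((B^-)^+,B^+)$. So it suffices to exhibit a canonical element of $\Hom_{\uB}(B^-,B^+)$ — namely $\underline{\alpha_B\gamma_B}=\underline{\alpha_B}\circ\underline{\gamma_B}\colon B^-\to B\to B^+$ — and lift it uniquely across both approximations to get $\eta_B\colon(B^-)^+\to(B^+)^-$; the inverse is obtained symmetrically from the same element $\underline{\alpha_B\gamma_B}$ read the other way, using that $\sigma^+$ and $\sigma^-$ restrict to identities on $\uB^+$ and $\uB^-$ (Proposition \ref{8.2}(b), \ref{8.3}(b)).

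To verify $\eta_B$ is an isomorphism I would show $\eta_B$ and its symmetric partner $\eta_B'$ compose to the identity on each side; this reduces, via the bijectivity statements just cited, to checking the corresponding identity after composing with $\underline\alpha_{B^-}$ and $\underline\gamma_{B^+}$, where everything collapses to $\underline{\alpha_B\gamma_B}$ and the claim becomes a diagram chase using that $\sigma^+(\underline{\alpha_B})$ and $\sigma^-(\underline{\gamma_B})$ are the structure maps and that on $\h$ the relevant approximation maps are isomorphisms in $\uB$ (this last point is where Lemma \ref{8.5}(c),(d) enters: any reflection/coreflection sequence detects $B^+$, resp. $B^-$, up to iso in $\uB$). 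Naturality in $B$: given $\underline f\colon B\to C$, both $\eta_C\circ(\sigma^+\sigma^-)(\underline f)$ and $(\sigma^-\sigma^+)(\underline f)\circ\eta_B$ are morphisms $(B^-)^+\to(C^+)^-$, and again by the double bijectivity (Proposition \ref{8}(b) on the target in $\B^+$, Proposition \ref{81}(b) on the source in $\B^-$) it suffices to check they agree after precomposing with $\underline\alpha_{B^-}$ and postcomposing with $\underline\gamma_{C^+}$; both then reduce to $\underline{\alpha_C}\circ\underline f\circ\underline{\gamma_B}\colon B^-\to C^+$ by the defining squares of $\sigma^+(\underline f)$ and $\sigma^-(\underline f)$, hence they coincide.

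The main obstacle I anticipate is not the existence of $\eta_B$ — that is forced by the adjunction bijections — but rather checking cleanly that $\eta_B$ lands in $\underline\h$ and is genuinely invertible rather than just a map between objects of $\h$; concretely, proving $\underline{\eta_B'}\circ\underline{\eta_B}=\underline\id$ requires knowing that lifting the \emph{same} morphism $\underline{\alpha_B\gamma_B}$ through the two approximations in either order gives mutually inverse maps, and this in turn rests on the compatibility $\sigma^-\sigma^+(\underline{\alpha_B})$ being an isomorphism and $\sigma^+\sigma^-(\underline{\gamma_B})$ being an isomorphism — facts that should follow from Lemma \ref{8.5}(c),(d) (applied to suitable reflection/coreflection sequences for $B^-$ and $B^+$) but need to be spelled out. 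A secondary technical point is ensuring the chosen representatives $(B^-)^+$ and $(B^+)^-$ are handled coherently so that $\eta$ is well defined independent of the fixed choices made in defining $\sigma^\pm$; this is routine given that all choices are canonically isomorphic in $\uB$ by Propositions \ref{8} and \ref{81}.
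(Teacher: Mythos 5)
Your construction of $\eta_B$ via the adjunction bijections is correct and matches the paper's final step: lift $\underline{\alpha_B\gamma_B}\colon B^-\to B^+$ through $\underline{\gamma_{B^+}}$ (Proposition \ref{81}(b), using $B^-\in\B^-$) to get $\theta\colon B^-\to(B^+)^-$, then through $\underline{\alpha_{B^-}}$ (Proposition \ref{8}(b), using $(B^+)^-\in\B^+$) to get $\eta_B\colon(B^-)^+\to(B^+)^-$. Your naturality argument is also sound and is essentially the paper's. The gap is exactly where you flagged it: invertibility.

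The problem is that the adjunction properties only produce morphisms in one direction. Proposition \ref{8}(b) lifts morphisms \emph{out of} $B^-$ to morphisms \emph{out of} $(B^-)^+$, and Proposition \ref{81}(b) lifts morphisms \emph{into} $B^+$ to morphisms \emph{into} $(B^+)^-$. Neither gives you a way to manufacture a map $(B^+)^-\to(B^-)^+$; if you lift $\underline{\alpha_B\gamma_B}$ ``in the other order'' (first through $\underline{\alpha_{B^-}}$, then through $\underline{\gamma_{B^+}}$), uniqueness forces you to recover the \emph{same} $\eta_B$, not an inverse. So ``symmetry'' does not produce a candidate for $\eta_B^{-1}$, and the reduction you describe (``check the composite equals identity after pre/post-composing with the approximations'') has nothing to check because there is no composite.

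What the paper actually does is the substantive part you are missing. It shows that the map $t\colon B^+\to\Omega^-V_0$ (induced by comparing the coreflection sequence of $B$ with an injective resolution of $V_0$, via $\alpha_B$) factors through $\V$, forms the pullback $Q$ of $t$ along $I^0\twoheadrightarrow\Omega^-V_0$, and proves $Q\in\h$ using \cite[Lemmas 2.10, 2.11]{L}. The point is that this single object $Q$ sits in a coreflection sequence $V_0\rightarrowtail Q\twoheadrightarrow B^+$ for $B^+$ \emph{and} in a reflection sequence $B^-\rightarrowtail Q\twoheadrightarrow U^0$ for $B^-$. Lemma \ref{8.5}(c),(d) then gives genuine isomorphisms $\beta'\colon\sigma^+(B^-)\xrightarrow{\sim}Q$ and $\alpha'\colon Q\xrightarrow{\sim}\sigma^-(B^+)$ in $\uB$, and one checks $\eta_B=\alpha'\beta'$ by uniqueness. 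Your proposal correctly identifies Lemma \ref{8.5}(c),(d) as the relevant tool, but does not see that one must construct an explicit common witness $Q$ to apply it; without $Q$ there is no way to convert the one-sided universal property into a two-sided isomorphism.
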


\begin{proof}
By Proposition \ref{8} and \ref{81} both $\sigma^+ \circ \sigma^-$ and $\sigma^- \circ \sigma^+$ are functors from $\uB$ to $\underline \h$.\\
By Lemma \ref{8.5}, We can take the following commutative diagram of short exact sequences
$$\xymatrix@C=0.6cm@R0.6cm{
V_0 \ar@{ >->}[r]^v \ar@{=}[d] &B^- \ar@{->>}[r]^{\gamma_B} \ar[d]_d &B \ar[d]^{y_0}\\
V_0 \ar@{ >->}[r]_j &I^0 \ar@{->>}[r]_i &\Omega^-V_0
}
$$
where $y_0$ factors through $V^B$ since $v^B$ is a left $\V$-approximation of $\B$.
$$\xymatrix@C=0.6cm@R0.6cm{
B \ar[rr]^{y_0} \ar[dr]_{v^B} &&\Omega^-V_0\\
&V^B \ar[ur]_{v'}
}
$$
By Lemma \ref{8.00} and Proposition \ref{8}, there exists a morphism $t:B^+\rightarrow \Omega^-V_0$ such that $y_0=t\alpha_B$. Since $\Ext^1_\B(U^0,V^B)=0$, there exists a morphism $v_0:B^+\rightarrow V^B$ such that $v^B=v_0\alpha_B$. Thus $t\alpha_B=v'v^B=v'v_0\alpha_B$, then we obtain that $t-v'v_0$ factors through $U^0$.
$$\xymatrix{
B \ar@{ >->}[r]^{\alpha_B} \ar[d]_{v^B} &B^+ \ar[dl]^{v_0}_{\circlearrowright} \ar[d]^-{t-v'v_0} \ar@{->>}[rr] &&U^0 \ar@{.>}@/^/[dll]^u_{\circlearrowright}\\
V^B \ar[r]_{v'} &\Omega^-V_0
}
$$
Since $\Ext^1_\B(U^0,V_0)=0$, $u$ factors through $I^0\in \V$. Hence $t$ factors through $\V$.\\
Take a pull-back of $t$ and $i$, we get the following commutative diagram
$$\xymatrix@C=0.6cm@R0.6cm{
V_0 \ar@{ >->}[r] \ar@{=}[d] &Q \ar@{}[dr]|{PB} \ar@{->>}[r]^s \ar[d]_{d'} &B^+ \ar[d]^t\\
V_0 \ar@{ >->}[r]_j &I^0 \ar@{->>}[r]_i &\Omega^-V_0.
}
$$
By \cite[{Lemma 2.11}]{L}, we obtain $Q\in \B^+$. Now by Proposition \ref{7}, we get the following commutative diagram
$$\xymatrix@C=0.6cm@R0.6cm{
V_0 \ar@{ >->}[d] \ar@{=}[r] &V_0 \ar@{ >->}[d]\\
Q' \ar@{ >->}[r] \ar@{->>}[d] &Q \ar@{->>}[r] \ar@{->>}[d]^s &U^0 \ar@{=}[d]\\
B \ar@{ >->}[r]_{\alpha_B} &B^+ \ar@{->>}[r] &U^0.
}
$$
By the definition of pull-back, there exists a morphism $k:B\rightarrow Q$ such that $sk=\alpha_B\gamma_B$ and $d'k=d$. Hence we have the following diagram
$$\xymatrix{
V_0 \ar@/_15pt/[dd]_{\text{id}_{V_0}} \ar@{.>}[d]^{v_0} \ar@{ >->}[r]^v &B^- \ar@{.>}[d]^k \ar@/_15pt/[dd]_(.75)d \ar@{->>}[r]^{\gamma_B} &B \ar[d]^{\alpha_B}\\
V_0 \ar@{ >->}[r] \ar@{=}[d] &Q \ar@{->>}[r]^s \ar@{}[dr]|{PB} \ar[d]^{d'} &B^+ \ar[d]^t\\
V_0 \ar@{ >->}[r]_j &I^0 \ar@{->>}[r]_i &\Omega^-V_0
}
$$
where the upper-left square commutes. Hence $jv_0=d'kv=dv=j$, we can conclude that $v_0=\text{id}_{V_0}$ since $j$ is monomorphic.
By the same method we can get the following commutative diagram
$$\xymatrix{
V_0 \ar@/_15pt/[dd]_{\text{id}_{V_0}} \ar@{.>}[d]^{v_0'} \ar@{ >->}[r]^v &B^- \ar@{.>}[d]^{k'} \ar@/_15pt/[dd]_(.75)k \ar@{->>}[r]^{\gamma_B} &B \ar@{=}[d]\\
V_0 \ar@{ >->}[r] \ar@{=}[d] &Q' \ar@{->>}[r] \ar@{ >->}[d] &B \ar@{ >->}[d]^{\alpha_B}\\
V_0 \ar@{ >->}[r] &Q \ar@{->>}[r] &B^+
}
$$
where $v_0'=\text{id}_{V_0}$. Therefore $k'$ is isomorphic by \cite[{Corollary 3.2}]{B}. We obtain the following commutative diagram
$$\xymatrix@C=0.6cm@R0.6cm{
V_0 \ar@{ >->}[d]_v \ar@{=}[r] &V_0 \ar@{ >->}[d]\\
B^- \ar@{ >->}[r]^k \ar@{->>}[d]_{\gamma_B} &Q \ar@{->>}[r] \ar@{->>}[d]^s &U^0 \ar@{=}[d]\\
B \ar@{ >->}[r]_{\alpha_B} &B^+ \ar@{->>}[r] &U^0.
}
$$
We get $Q\in \B^-$ by \cite[{Lemma 2.10}]{L}, hence $Q\in \h$.
Since $t$ factors through $\V$, $\xymatrix{V_0 \ar@{ >->}[r] &Q \ar@{->>}[r]^{s} &B^+}$ is a coreflection sequence for $B^+$. By Lemma \ref{8.5}, we have the following commutative diagram
$$\xymatrix@C=0.6cm@R0.6cm{
&Q \ar[dr]^{\underline s} \ar@{.>}[dl]_-{\alpha'}\\
\sigma^-(B^+) \ar[rr]_\alpha &&B^+
}
$$
in $\uB$ where $\alpha'$ is isomorphic. 

By duality we conclude that $\xymatrix{B^- \ar@{ >->}[r]^{k} &Q \ar@{->>}[r] &U^0}$ is a reflection sequence for $B^-$. By Lemma \ref{8.5}, we have the following commutative diagram
$$\xymatrix@C=0.6cm@R0.6cm{
B^- \ar[rr]^{\beta} \ar[dr]_{\underline k} &&\sigma^+(B^-) \ar@{.>}[dl]^{\beta'}\\
&Q
}
$$
in $\uB$ where $\beta'$ is isomorphic. 

By Proposition \ref{81}, there exists a morphism $\theta:B^-\rightarrow \sigma^-\sigma^+(B)$ in $\uB$ such that $\alpha \theta=\underline {\alpha_B\gamma_B}$. Then by Proposition \ref{8}, there exists a unique morphism $\eta_B:\sigma^+\sigma^-(B) \rightarrow \sigma^-\sigma^+(B)$ such that $\eta_B\beta=\theta$. Hence we get the following commutative diagram
$$\xymatrix{
\sigma^+\sigma^-(B) \ar@{.>}[dd]_{\eta_B} &B^- \ar[l]_-{\beta} \ar@{.>}[ddl]^\theta \ar[d]^{\underline {\gamma_B}}\\
&B \ar[d]^{\underline {\alpha_B}}\\
\sigma^-\sigma^+(B) \ar[r]_-{\alpha} &B^+.
}
$$
Then $\alpha\eta_B\beta=\underline {\alpha_B\gamma_B}=\underline {sk}=\alpha\alpha'\beta'\beta$, and we have $\eta_B=\alpha'\beta'$ by Proposition \ref{8} and \ref{81}. Thus $\eta_B$ is isomorphic.
Let $\underline f:B\rightarrow C$ be a morphism in $\uB$, then we can get the following diagram by Proposition \ref{8} and \ref{81}.
$$\xymatrix{
\sigma^+\sigma^-(B) \ar@{}[ddr]|{\circlearrowright} \ar@/^15pt/[rrr]^{\sigma^+\sigma^-(\underline f)} \ar[dd]_{\eta_B} &B^- \ar@{}[dr]|{\circlearrowright} \ar[l]^-{\beta} \ar[d]_{\underline {\gamma_B}} \ar[r]^{\sigma^-(\underline f)} &C^- \ar@{}[ddr]|{\circlearrowright} \ar[r]_-{\gamma} \ar[d]^{\underline {\gamma_C}} &\sigma^+\sigma^-(C) \ar[dd]^{\eta_C}\\
&B \ar@{}[dr]|{\circlearrowright} \ar[d]_{\underline {\alpha_B}} \ar[r]^{\underline f} &C \ar[d]^{\underline {\alpha_C}}\\
\sigma^-\sigma^+(B) \ar@/_15pt/[rrr]_{\sigma^-\sigma^+(\underline f)} \ar[r]^-{\alpha} &B^+ \ar[r]_{\sigma^+(\underline f)} &C^+ &\sigma^-\sigma^+(C) \ar[l]_-{\delta}
}
$$
Since
$$\delta(\sigma^-\sigma^+(\underline f))\eta_B\beta=(\sigma^+(\underline f))\underline {\alpha_B\gamma_B}=\underline {\alpha_C\gamma_C}(\sigma^-(\underline f))=\delta\eta_C(\sigma^+\sigma^-(\underline f))\beta$$
we get $(\sigma^-\sigma^+(\underline f))\eta_B=\eta_C(\sigma^+\sigma^-(\underline f))$ by Proposition \ref{8} and \ref{81}. Thus $\eta$ is a natural isomorphism.
\end{proof}

\section{Half exact functor}

By Proposition \ref{8.6}, we have an isomorphism of functors $\B \to \underline \h$
where $\pi:\B\rightarrow \uB$ denotes the canonical functor. Let $H:=\sigma^+ \circ \sigma^- \circ \pi \simeq \sigma^- \circ \sigma^+ \circ \pi:\B \to \underline \h$.
The aim of this section is to show the following theorem. 

\begin{thm}\label{8.11}
For any cotorsion pair $(\U,\V)$ in $\B$
, the functor
$H:\B\rightarrow \underline \h$
is half exact.
\end{thm}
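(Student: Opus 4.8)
We must show that for every short exact sequence $A\xrightarrow{f}B\xrightarrow{g}C$ in $\B$ the sequence $H(A)\xrightarrow{H(f)}H(B)\xrightarrow{H(g)}H(C)$ is exact at $H(B)$. One inclusion is automatic: $gf=0$ in $\B$, so $H(g)H(f)=H(gf)=H(0)=0$, whence $\Im H(f)\subseteq\Ker H(g)$; the content of the theorem is the reverse inclusion $\Ker H(g)\subseteq\Im H(f)$ in the abelian category $\underline\h$.

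The first ingredients are the behaviour of $H$ on $\U$, $\V$ and on the adjunction morphisms. For $U\in\U$, Proposition \ref{8.2}(c) gives $\sigma^+(U)=0$, hence $H(U)=\sigma^-(0)=0$; dually $H(V)=0$ for $V\in\V$ by Proposition \ref{8.3}(c) and the natural isomorphism $H\simeq\sigma^+\circ\sigma^-\circ\pi$ of Proposition \ref{8.6}; in particular $H(\mathcal P)=H(\mathcal I)=0$. Moreover $H$ sends each $\alpha_B\colon B\to B^+$ and each $\gamma_B\colon B^-\to B$ to an isomorphism (a triangle-identity argument for the two adjunctions, together with Proposition \ref{8.6}). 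The crucial step, however, is: if $X\xrightarrow{f}Y\twoheadrightarrow U$ is short exact with $U\in\U$, then $H(f)$ is an isomorphism; dually, if $V\rightarrowtail Y\xrightarrow{g}Z$ is short exact with $V\in\V$, then $H(g)$ is an isomorphism. For the first, it is enough that $\sigma^+(\underline f)\colon X^+\to Y^+$ be invertible in $\uB$. Injectivity of $\Hom_{\uB}(Y,Z)\to\Hom_{\uB}(X,Z)$ for $Z\in\B^+$ follows from $\Ext^1_\B(U,\W)=0$ and $\Hom_{\uB}(U,Z)=0$; surjectivity is the delicate point, and I would get it by verifying that the composite $X\rightarrowtail Y\rightarrowtail Y^+$ is again a reflection sequence for $X$ --- its cokernel is an extension of two objects of $\U$, hence lies in $\U$ by Proposition \ref{7}, and the diagram condition of Definition \ref{8.4} is obtained by lifting along a projective cover of this cokernel --- so that Lemma \ref{8.5}(c) yields $Y^+\simeq X^+$ in $\uB$, an isomorphism which is exactly $\sigma^+(\underline f)$ by the uniqueness clause in Proposition \ref{8}.

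Granting these lemmas, the general conflation is reduced by a short sequence of push-outs and pull-backs, at each step of which the induced map of three-term $H$-sequences is an isomorphism in every slot. Push out along $\alpha_A\colon A\rightarrowtail A^+$ of diagram (2.1): the subobject becomes $A^+\in\B^+$ and one also gets a short exact sequence $B\rightarrowtail B_1\twoheadrightarrow U^0$ with $U^0\in\U$, so the $\U$-lemma applies to the middle slot. Paste the resulting conflation with $B_1\rightarrowtail B_1^+\twoheadrightarrow U^0_{B_1}$ using Proposition \ref{7}(b): the middle term becomes $B_1^+\in\B^+$ and the quotient becomes an extension of an object of $\U$, again harmless. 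Finally pull back along the coreflection $\gamma$ of diagram (2.2): the quotient becomes an object of $\B^-$ while, by Proposition \ref{PO} and the fact that $\V\subseteq\B^+$ with $\B^+$ extension-closed, the subobject and middle term remain in $\B^+$. One is thus reduced to a short exact sequence $A\rightarrowtail B\twoheadrightarrow C$ with $A,B\in\B^+$ and $C\in\B^-$; here $H(A)=A^-$, $H(B)=B^-$, $H(C)=C^+$ all lie in $\h$, $H(f)=\sigma^-(\underline f)$, and $H(g)$ is $\sigma^-$ applied to $\sigma^+(\underline g)\colon B\to C^+$. Exactness at $B^-$ in this reduced situation is a direct computation using the bijectivity statements of Propositions \ref{8} and \ref{81} and the construction of images and kernels in $\underline\h$ given in \cite{L}, where the image of a morphism (resp.\ the kernel of a morphism) is obtained by applying $\sigma^+$ (resp.\ $\sigma^-$) to a cokernel (resp.\ kernel) formed in $\B$.

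The steps I expect to be genuinely hard are: the surjectivity in the two $\U$/$\V$-lemmas, which requires checking the technical diagram condition of Definition \ref{8.4} for the newly built reflection/coreflection sequences; the bookkeeping in the reduction, where one must see that no push-out or pull-back destroys a membership in $\B^+$ or $\B^-$ gained earlier (this leans on Propositions \ref{PO} and \ref{7}, the approximation properties of Propositions \ref{8} and \ref{81}, and the facts $\U\subseteq\B^-$, $\V\subseteq\B^+$); and the concluding identification of $\Im H(f)$ with $\Ker H(g)$ in the reduced case, where the abelian structure of $\underline\h$ imported from \cite{L} does the work.
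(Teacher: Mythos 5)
Your approach differs from the paper's, and it contains a genuine gap at what you yourself call the "crucial step." You claim that if $X\xrightarrow{f}Y\twoheadrightarrow U$ is a conflation with $U\in\U$, then $H(f)$ is an isomorphism, and you propose to deduce this by showing that $X\rightarrowtail Y^+$ (with cokernel $U'\in\U$) is a reflection sequence for $X$, "the diagram condition of Definition~\ref{8.4} being obtained by lifting along a projective cover." But lifting along a projective only produces the commutative square
$$\xymatrix{\Omega U'\;\ar@{ >->}[r]\ar[d]_x &P_{U'}\ar@{->>}[r]\ar[d] &U'\ar@{=}[d]\\ X\;\ar@{ >->}[r] &Y^+\ar@{->>}[r] &U'}$$
and gives no reason for $x$ to factor through $\U$; that factorisation is precisely the non-automatic hypothesis of Definition~\ref{8.4}. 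Compare the proof of Lemma~\ref{8.5}(a), which uses the pull-back structure of diagram (2.1) in an essential way to manufacture the factorisation through $U_B$ --- nothing analogous is available for a generic conflation with quotient in $\U$.

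If your lemma were correct it would be far too strong: applied to $\Omega U\rightarrowtail P_U\twoheadrightarrow U$ it would give $H(\Omega U)\cong H(P_U)=0$ for all $U\in\U$, hence $\Omega\U\subseteq\add(\U*\V)$ by Proposition~\ref{8.14}; in the cluster-tilting case $(\U,\V)=(\M,\M)$, where $H=\pi\colon\B\to\B/\M$, this reads $\Omega\M\subseteq\M$, which fails in general. It would also trivialise the connecting morphism $H(\Omega C)\to H(A)$ of Corollary~\ref{8.15}, making $H$ exact rather than merely half exact. What is actually true is the weaker Lemma~\ref{8.9}: from such a diagram one only gets an exact sequence $H(\Omega U)\xrightarrow{H(f)}H(A)\xrightarrow{H(g)}H(B)\to 0$, so $H(g)$ is an epimorphism whose kernel is controlled by $H(\Omega U)$, not an isomorphism. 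The paper's proof of Theorem~\ref{8.11} builds a push-out square from $A\rightarrowtail V^A\twoheadrightarrow U^A$ and $f\colon A\rightarrowtail B$, extracts three four-term exact sequences from the resulting large diagram (two instances of Lemma~\ref{8.9} and one of its dual Lemma~\ref{8.10}), and deduces exactness of $H(A)\to H(B)\to H(C)$ by a diagram chase. Your reduction scheme, which needs an isomorphism in every slot at every step, cannot be repaired to run on the weaker exactness statements; once you substitute them in, the argument essentially becomes the paper's.
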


We call $H$ the associated \emph{half exact} functor to $(\U,\V)$.

\begin{prop}\label{8.7}
The functor $H$ has the following properties:
\begin{itemize}
\item[(a)] $H$ is an additive functor.
\item[(b)] $H|_{\h}=\pi|_{\h}$.
\item[(c)] $H(\U)=0$ and $H(\V)=0$ hold. In particular, $H(\mathcal P)=0$ and $H(\mathcal I)=0$.
\item[(d)] For any reflection sequence $\xymatrix{B \ar@{ >->}[r]^{z} &Z \ar@{->>}[r] &U}$ for $B$, $H(z)$ is an isomorphism in $\underline \h$.
\item[(e)] For any coreflection sequence $\xymatrix{V \ar@{ >->}[r] &K \ar@{->>}[r]^{k} &B}$ for $B$, $H(k)$ is an isomorphism in $\underline \h$.
\end{itemize}
\end{prop}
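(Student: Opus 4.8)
The plan is to deduce all five statements formally from the properties of $\sigma^+$ and $\sigma^-$ already established in Propositions \ref{8.2} and \ref{8.3}, from the natural isomorphism $\eta\colon\sigma^+\circ\sigma^-\xrightarrow{\simeq}\sigma^-\circ\sigma^+$ of Proposition \ref{8.6}, and from Lemma \ref{8.5}. Part (a) is immediate, since $\pi$, $\sigma^+$ and $\sigma^-$ are all additive (Propositions \ref{8.2}(a), \ref{8.3}(a)) and a composite of additive functors is additive. For (b), recall from Propositions \ref{8.2}(b) and \ref{8.3}(b) that $\sigma^+$ acts as the identity on objects and morphisms of $\B^+$ and $\sigma^-$ acts as the identity on objects and morphisms of $\B^-$; hence for $B\in\h=\B^+\cap\B^-$ we get $\sigma^+\pi(B)=B$ and then $\sigma^-$ leaves the resulting object and morphisms unchanged, so $H=\sigma^-\circ\sigma^+\circ\pi$ coincides with $\pi$ on $\h$.

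For (c), if $U\in\U$ then $\sigma^+\pi(U)=0$ by Proposition \ref{8.2}(c), so $H(U)=\sigma^-(0)=0$. For $V\in\V$ it is cleanest to use the natural isomorphism $H\simeq\sigma^+\circ\sigma^-\circ\pi$ furnished by Proposition \ref{8.6}: since $\sigma^-\pi(V)=0$ by Proposition \ref{8.3}(c), we obtain $H(V)\cong\sigma^+(0)=0$. The remaining assertions follow because $\B$ has enough projectives and injectives: for $P\in\mathcal P$ the sequence $V_P\rightarrowtail U_P\twoheadrightarrow P$ of Definition \ref{2}(b) splits, exhibiting $P$ as a direct summand of $U_P\in\U$, so $\mathcal P\subseteq\U$; dually $\mathcal I\subseteq\V$; hence $H(\mathcal P)=0=H(\mathcal I)$.

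Parts (d) and (e) are where Lemma \ref{8.5} enters. Given a reflection sequence $B\rightarrowtail Z\twoheadrightarrow U$ for $B$, the proof of Lemma \ref{8.5}(c) in fact supplies mutually inverse isomorphisms $Z\cong B^+$ in $\uB$ that are compatible with $z$ and $\alpha_B$; feeding this into the defining equation $\underline{\alpha_Z}\circ\underline z=\sigma^+(\underline z)\circ\underline{\alpha_B}$ and using the uniqueness clause of Proposition \ref{8.2}(b) shows that $\sigma^+(\underline z)$ is an isomorphism in $\uB$. Applying the functor $\sigma^-$ then gives that $H(z)=\sigma^-(\sigma^+(\underline z))$ is an isomorphism in $\underline\h$, which is (d). Statement (e) is the formal dual: from a coreflection sequence $V\rightarrowtail K\xrightarrow{k}B$ together with Lemma \ref{8.5}(d) one gets that $\sigma^-(\underline k)$ is an isomorphism, hence so is $\sigma^+(\sigma^-(\underline k))$, and the natural isomorphism $\eta$ of Proposition \ref{8.6} identifies the latter with $H(k)$.

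The only points that require any care are the bookkeeping of the fixed-but-arbitrary choices of $B^+$ and $B^-$ implicit in the definition of $H$, and the choice of which of the two composites $\sigma^-\sigma^+\pi$, $\sigma^+\sigma^-\pi$ is convenient for a given item; both are harmless, since any two choices of $B^+$ (resp.\ $B^-$) are canonically isomorphic in $\uB$ by Proposition \ref{8} (resp.\ Proposition \ref{81}), and the two composites are naturally isomorphic by Proposition \ref{8.6}. I do not expect a genuine obstacle: once Lemma \ref{8.5} and Proposition \ref{8.6} are in hand, statements (a)--(e) all follow essentially formally.
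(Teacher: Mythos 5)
Your proposal is correct and follows essentially the same route as the paper's proof: (a) and (b) from additivity and the restriction identities of Propositions \ref{8.2} and \ref{8.3}; (c) from $\sigma^+(\underline\U)=0$, $\sigma^-(\underline\V)=0$ together with $\mathcal P\subseteq\U$, $\mathcal I\subseteq\V$ (with the isomorphism $\eta$ of Proposition \ref{8.6} used to handle the $\V$-side, which the paper leaves implicit under ``dually''); and (d), (e) by reading off from the proof of Lemma \ref{8.5} that $\sigma^+(\underline z)$ (resp.\ $\sigma^-(\underline k)$) equals the isomorphism $\underline g$ constructed there. You spell out more explicitly than the paper exactly where $\eta$ is needed, but there is no substantive difference.
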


\begin{proof}
(a) is followed by the definition of $H$ and Propositions \ref{8.2}, \ref{8.3} directly.

Since $\h=\B^+\cap \B^-$, by Proposition \ref{8.2}, \ref{8.3}, we get (b).

By Proposition \ref{8.2}, $\sigma^+(\uB^+)=0$, hence $H(\U)=0=H(\mathcal P)$ since $\mathcal P\subseteq \U$, dually we have $H(\V)=0=H(\mathcal I)$. Hence (c) holds.

For any reflection sequence, we have $H(z)=\sigma^-\circ\sigma^+(\underline z)=\sigma^-(\underline g)$ where $g:B^+\rightarrow Z$ is the morphism in the proof of Lemma \ref{8.5}. Since $\underline g$ is an isomorphism, we get $H(z)$ is an isomorphism in $\underline \h$. Thus (d) holds and by dual, (e) also holds.
\end{proof}

\begin{lem}\label{1}
Let $B$ be any object in $\B$, $\Hom_{\uB}(\U,\B^+)=0$ and $\Hom_{\uB}(B^-,\V)=0$ hold.
\end{lem}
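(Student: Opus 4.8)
The plan is to prove the first statement, $\Hom_{\uB}(\U,\B^+)=0$; the second, $\Hom_{\uB}(B^-,\V)=0$, is dual. Let $U\in\U$ be arbitrary; I want to show that every morphism $\underline g\colon U\to B^+$ in $\uB$ vanishes, i.e. that any representative $g\colon U\to B^+$ in $\B$ factors through $\W$. The main tool is the defining diagram \eqref{F1} for $B^+$: the middle row is a short exact sequence $V_B\rightarrowtail W^0\xrightarrow{w} B^+$ with $W^0\in\W$ and $V_B\in\V$, and $w$ is a right $\U$-approximation of $B^+$ (this is how $B^+$ was built, and it is used in the proof of Proposition \ref{8.2}(c)).

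The key step is: since $w\colon W^0\to B^+$ is a right $\U$-approximation and $U\in\U$, the morphism $g\colon U\to B^+$ factors as $g=wh$ for some $h\colon U\to W^0$. Because $W^0\in\W$, this exhibits $g$ as factoring through an object of $\W$, hence $\underline g=0$ in $\uB=\B/\W$. Since $U\in\U$ and $\underline g$ were arbitrary, $\Hom_{\uB}(\U,\B^+)=0$.

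For the dual statement, one uses diagram \eqref{F2}: $B^-$ fits into $V_0\rightarrowtail W_0\twoheadrightarrow U^B$ and, more to the point, $\gamma_B\colon B^-\to B$ was constructed so that the relevant map out of $B^-$ is a left $\W$-type approximation; concretely, from Definition \ref{10} the object $B^-$ sits in $B^-\rightarrowtail W_0\twoheadrightarrow U^B$ with $W_0\in\W$, and the inflation $B^-\rightarrowtail W_0$ is a left $\V$-approximation of $B^-$ (dual to $w$ being a right $\U$-approximation). Then any $\underline f\colon B^-\to V$ with $V\in\V$ lifts along $B^-\rightarrowtail W_0$, factoring through $W_0\in\W$, so $\underline f=0$.

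I do not expect a serious obstacle here: the content is essentially unwinding the construction of $B^+$ (resp. $B^-$) and invoking the approximation property already recorded in Proposition \ref{8} (resp. \ref{81}) and used in the proof of Proposition \ref{8.2}. The only point requiring a little care is making sure the correct map in diagram \eqref{F1} is the one with the $\U$-approximation property — it is $w\colon W^0\to B^+$, the map appearing as the horizontal arrow out of $W^0$ — and that $W^0$ genuinely lies in $\W$ rather than merely in $\V$, which is guaranteed since $\U$ is closed under extensions (as noted in Definition \ref{re}).
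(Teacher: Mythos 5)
Your proof is correct and essentially identical to the paper's: both produce a short exact sequence $V\rightarrowtail W\twoheadrightarrow X$ with $V\in\V$, $W\in\W$ for the relevant object $X$ of $\B^+$, then use $\Ext^1_\B(\U,\V)=0$ to conclude that $W\twoheadrightarrow X$ is a right $\U$-approximation, so every morphism from $\U$ to $X$ factors through $\W$ and vanishes in $\uB$. The only difference is cosmetic: the paper reads this sequence directly off the defining property of $\B^+$ (namely that $V_X\rightarrowtail U_X\twoheadrightarrow X$ has $U_X\in\W$), whereas you read it off the middle row of diagram (2.1) for the constructed $B^+$; these are interchangeable.
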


\begin{proof}
We only show $\Hom_{\uB}(\U,\B^+)=0$, the other one is dual.\\
Since $B\in \B^+$, it admits a short exact sequence $V_B \rightarrowtail W_B\twoheadrightarrow B$
where $W_B\in \W$. Then any morphism from an object in $\U$ to $B$ factors through $W_B$, and the assertion follows.
\end{proof}

\begin{lem}\label{8.9}
Let
\begin{equation}\label{F3}
$$\quad \quad \quad \quad \quad \quad \quad \quad \quad \quad \quad \quad \quad \quad \quad \quad \quad \quad\xymatrix@C=0.6cm@R0.6cm{
\Omega U\ar@{ >->}[r]^q \ar[d]_f &P_U \ar@{->>}[r] \ar[d]^p &U \ar@{=}[d]\\
A \ar@{ >->}[r]_g &B \ar@{->>}[r]_h &U
}
$$
\end{equation}
be a commutative diagram satisfying $U\in \U$ and $P_U\in \mathcal P$. Then the sequence
$$H(\Omega U)\xrightarrow{H(f)} H(A)\xrightarrow{H(g)} H(B)\rightarrow 0$$
is exact in $\underline \h$.
\end{lem}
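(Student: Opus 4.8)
The plan is to prove the two required exactness statements separately: exactness at $H(B)$, i.e. that $H(g)$ is an epimorphism in $\underline\h$, and exactness at $H(A)$. Two facts are available at once. By Proposition \ref{PO} the left-hand square of \eqref{F3} is a push-out, so $\xymatrix{A\ar@{ >->}[r]^g&B\ar@{->>}[r]^h&U}$ is short exact and $U=\Coker_\B g$. By Lemma \ref{8.00} we have $\Omega U\in\B^-$, hence $H(\Omega U)\cong(\Omega U)^+\in\h$ (Proposition \ref{8}). Since $pq$ factors through $P_U\in\mathcal P\subseteq\U$ and $H(\U)=0$ (Proposition \ref{8.7}(c)), $H(g)H(f)=H(pq)=0$, so $\Im H(f)\subseteq\Ker H(g)$; it remains to show that $H(g)$ is epic and that $\Ker H(g)\subseteq\Im H(f)$.

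The first reduction I would make is to the case $A,B\in\B^-$. Let $\xymatrix{V_0\ar@{ >->}[r]&B^-\ar@{->>}[r]^{\gamma_B}&B}$ be the coreflection sequence of \eqref{F2} and pull the short exact sequence $\xymatrix{A\ar@{ >->}[r]^g&B\ar@{->>}[r]^h&U}$ back along the deflation $\gamma_B$; this produces $\widetilde A:=A\times_B B^-$ together with short exact sequences $\xymatrix{\widetilde A\ar@{ >->}[r]&B^-\ar@{->>}[r]&U}$ and $\xymatrix{V_0\ar@{ >->}[r]&\widetilde A\ar@{->>}[r]&A}$. Since $B^-\in\B^-$ and $U\in\U$, the first forces $\widetilde A\in\B^-$ ($\B^-$ is closed under subobjects whose cokernel lies in $\U$), and one checks — using that $\gamma_B$ is part of a coreflection sequence — that the second is a coreflection sequence for $A$; hence by Lemma \ref{8.5}(d) and Proposition \ref{8.7}(e) the comparison maps $\gamma_B$ and $\widetilde A\twoheadrightarrow A$ become isomorphisms under $H$. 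As $P_U$ is projective, $p$ lifts through $\gamma_B$ to some $\widetilde p\colon P_U\to B^-$, and the universal property of the pull-back then gives $\widetilde f\colon\Omega U\to\widetilde A$ making
$$\xymatrix{
\Omega U\ar@{ >->}[r]^q \ar[d]_{\widetilde f} &P_U \ar@{->>}[r] \ar[d]^{\widetilde p} &U \ar@{=}[d]\\
\widetilde A\ar@{ >->}[r] &B^- \ar@{->>}[r] &U
}$$
a commutative diagram of the shape \eqref{F3} with $\widetilde A,B^-\in\B^-$, and the comparison isomorphisms identify $H(\widetilde f),H(\widetilde A\rightarrowtail B^-)$ with $H(f),H(g)$. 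So from now on $A,B\in\B^-$; then, by Proposition \ref{8}(a), $H(A)\cong A^+$, $H(B)\cong B^+$ and $H(\Omega U)\cong(\Omega U)^+$ all lie in $\h$, while $H(f)=\sigma^+(\underline f)$ and $H(g)=\sigma^+(\underline g)$.

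For the remaining case I would build the reflection sequence of $B$ (Definition \ref{re}) so that it extends the one of $A$ and is compatible with \eqref{F3}: because $B$ is an extension of $U\in\U$ by $A$ and the fixed diagram \eqref{F3} already supplies the projective part of a $\U$-precover of $U$, a horseshoe-type application of Proposition \ref{7} produces short exact sequences relating the reflection data of $A$ and of $B$ whose ``cofiber'' is built only from the $\U$-object $U$ and whose ``fiber'' is $\Omega U$. Feeding this into Lemma \ref{1} ($\Hom_{\uB}(\U,\B^+)=0$), one reads off on the one hand that the cokernel of $\sigma^+(\underline g)$ in $\underline\h$ vanishes — so $H(g)$ is epic — and on the other hand that $\Ker\sigma^+(\underline g)=\Im\sigma^+(\underline f)$, which with the easy inclusion gives exactness at $H(A)$.

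The hard part will be this last step: one must choose the reflection-sequence data for $B$ compatibly with the fixed objects and maps in \eqref{F3}, and then carry out the bookkeeping that identifies the kernel and cokernel of $H(g)=\sigma^+(\underline g)$, computed in the abelian category $\underline\h$, with the concrete objects $\Omega U$ and $U$ provided by \eqref{F3}. Throughout, the recurring tools are the push-out/pull-back calculus of Propositions \ref{PO} and \ref{7}, the vanishing $\Ext^1_\B(\U,\V)=0$ — in particular $\Ext^1_\B(\U,\W)=0$, which lets one replace a morphism that vanishes in $\uB$ by one vanishing already in $\B$ — and Lemma \ref{1} together with $H(\U)=0$.
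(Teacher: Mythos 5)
Your first reduction to $A,B\in\B^-$ via the pull-back along $\gamma_B$ is correct and is exactly what the paper does (the paper calls the pull-back $L$, cites \cite[Lemma 2.10]{L} for $L\in\B^-$, and verifies the coreflection-sequence property of $V_0\rightarrowtail L\twoheadrightarrow A$ using Lemma \ref{8.5}). You also correctly observe the easy inclusion $\Im H(f)\subseteq\Ker H(g)$ and that after the reduction $H(X)=\sigma^+(X)$ for $X\in\{A,B,\Omega U\}$.

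The gap is in the remaining step, which you yourself flag as ``the hard part'': you propose to build reflection sequences for $A$ and $B$ compatibly, then ``read off'' the kernel and cokernel of $\sigma^+(\underline g)$ in $\underline\h$ from these data. This is not carried out, and it is also not the paper's route. The paper's key idea, which is absent from your proposal, is to avoid computing (co)kernels in $\underline\h$ entirely: instead, using the adjunction $(\sigma^+,i^+)$ (Proposition \ref{8}(b)) it suffices to check, for every test object $Q\in\h$, exactness of
$$0\rightarrow \Hom_{\uB}(B,Q)\xrightarrow{\Hom_{\uB}(\underline g,Q)} \Hom_{\uB}(A,Q) \xrightarrow{\Hom_{\uB}(\underline f,Q)} \Hom_{\uB}(\Omega U,Q),$$
which is a direct diagram-chase using $\Ext^1_\B(\U,\W)=0$, the push-out property of the left square of \eqref{F3}, and Lemma \ref{1}. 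Without that Yoneda-style reduction, your plan would require identifying the kernel and cokernel of a morphism in the heart explicitly, which by \cite[Theorems 3.10, 4.3]{L} involves its own auxiliary constructions; the ``horseshoe'' of reflection sequences you sketch would relate $A^+$, $B^+$, and the $\U$-data, but nothing in the sketch shows how Lemma \ref{1} then yields both the surjectivity of $H(g)$ and $\Ker H(g)\subseteq\Im H(f)$. So as written this is an outline with the decisive step missing, not a proof.
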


\begin{proof}
By Proposition \ref{7}, we get a commutative diagram by taking a pull-back of $g$ and $\gamma_B$
$$\xymatrix@C=0.6cm@R0.6cm{
V_0 \ar@{=}[r] \ar@{ >->}[d] &V_0 \ar@{ >->}[d]\\
L \ar@{ >->}[r]^{g'} \ar@{->>}[d]_l &B^- \ar@{->>}[d]^{\gamma_B} \ar@{->>}[r] &U \ar@{=}[d]\\
A \ar@{ >->}[r]_g &B \ar@{->>}[r]_h &U.
}
$$
By \cite[{Lemma 2.10}]{L}, $L\in \B^-$. We can obtain a commutative diagram of short exact sequences
$$\xymatrix@C=0.6cm@R0.6cm{
V_0 \ar@{=}[d] \ar@{ >->}[r] &L \ar[d] \ar@{->>}[r]^l &A \ar[d]^g\\
V_0 \ar@{=}[d] \ar@{ >->}[r] &B^- \ar[d] \ar@{->>}[r]^{\gamma_B} &B \ar[d]^j\\
V_0 \ar@{ >->}[r] &I^0 \ar@{->>}[r] &\Omega^-V_0
}
$$
where $j$ factors through $\V$ by Lemma \ref{8.5}, hence
$\xymatrix{V_0 \ar@{ >->}[r] &L \ar@{->>}[r]^{l} &A}$
is a coreflection sequence for $A$. By Proposition \ref{8.7}, $H(l)$ and $H(\gamma_B)$ are isomorphic in $\underline \h$. Thus, replacing $A$ by $L$ and $B$ by $B^-$, we may assume that $A,B\in \B^-$. Under this assumption, we show $H(g)$ is the cokernel of $H(f)$. We have $\Omega U\in \B^-$ by Lemma \ref{8.00}. For any $Q\in \h$, we have a commutative diagram
$$\xymatrix{
\Hom_{\uB}(H(B),Q) \ar[d]^{\simeq} \ar[rr]^-{\Hom_{\uB}(H(g),Q)} &&\Hom_{\uB}(H(A),Q) \ar[d]^{\simeq} \ar[rr]^-{\Hom_{\uB}(H(f),Q)} &&\Hom_{\uB}(H(\Omega U),Q) \ar[d]^{\simeq}\\
\Hom_{\uB}(\sigma^+(B),Q) \ar[d]^{\simeq} \ar[rr]^-{\Hom_{\uB}(\sigma^+(\underline g),Q)} &&\Hom_{\uB}(\sigma^+(A),Q) \ar[d]^{\simeq} \ar[rr]^-{\Hom_{\uB}(\sigma^+(\underline f),Q)} &&\Hom_{\uB}(\sigma^+(\Omega U),Q) \ar[d]^{\simeq}\\
\Hom_{\uB}(B,Q) \ar[rr]^-{\Hom_{\uB}(\underline g,Q)} &&\Hom_{\uB}(A,Q) \ar[rr]^-{\Hom_{\uB}(\underline f,Q)} &&\Hom_{\uB}(\Omega U,Q).
}
$$
So it suffices to show the following sequence
$$0\rightarrow \Hom_{\uB}(B,Q)\xrightarrow{\Hom_{\uB}(\underline g,Q)} \Hom_{\uB}(A,Q) \xrightarrow{\Hom_{\uB}(\underline f,Q)} \Hom_{\uB}(\Omega U,Q)$$
is exact.\\
We first show that $\Hom_{\uB}(\underline g,Q)$ is injective. Let $r:B\rightarrow Q$ be any morphism such that $\underline {rg}=0$. Take a commutative diagram of short exact sequences
$$\xymatrix@C=0.6cm@R0.6cm{
\Omega U^A \ar@{ >->}[r]^{q_A} \ar[d]_a &P_{U^A} \ar[d]^{p_A} \ar@{->>}[r] &U^A \ar@{=}[d]\\
A \ar@{ >->}[r]_{w^A} &W^A \ar@{->>}[r] &U^A.
}
$$
Since $rga$ factors through $\W$ and $\Ext^1_\B(W^A,\W)=0$, it factors through $q_A$. Thus there exists $c:W^A\rightarrow Q$ such that $cw^A=rg$.
$$\xymatrix{
\Omega U^A \ar[r]^{q_A} \ar[d]_a &P_{U^A} \ar[d]_{p_A} \ar@/^/[ddr]\\
A \ar[r]^{w^A} \ar@/_/[drr]_{rg} &W^A \ar@{.>}[dr]^c\\
&&Q
}
$$
As $\Ext^1_\B(U,W^A)=0$, there exists $d:B\rightarrow W^A$ such that $w^A=dg$. Hence $rg=cw^A=cdg$, then $r-cd$ factors through $U$.
$$\xymatrix{
A \ar[d]_{w^A} \ar@{ >->}[r]^g &B \ar@{->>}[rr] \ar[dl]^d_{\circlearrowright} \ar[d]^{r-cd} &&U \ar@{.>}@/^/[dll]_{\circlearrowright}\\
W^A \ar[r]_c &Q
}
$$
Since $\Hom_{\uB}(U,Q)=0$ by Lemma \ref{1}, we get that $\underline r=0$.\\
Assume $r':A\rightarrow Q$ satisfies $\underline {r'f}=0$, since $\Ext^1_\B(U,\W)=0$, $r'f$ factors through $q$. As the left square of (3) is a push-out, we get the following commutative diagram.
$$\xymatrix{
\Omega U \ar[r]^q \ar[d]_f &P_U \ar[d] \ar@/^/[ddr]\\
A \ar[r]^g \ar@/_/[drr]_{r'} &B \ar@{.>}[dr]\\
&&Q
}
$$
Hence $r'$ factors through $g$. This shows the exactness of
$$\Hom_{\uB}(B,Q)\xrightarrow{\Hom_{\uB}(\underline g,Q)} \Hom_{\uB}(A,Q) \xrightarrow{\Hom_{\uB}(\underline f,Q)} \Hom_{\uB}(\Omega U,Q).$$
\end{proof}

Dually, we have the following:
\begin{lem}\label{8.10}
Let
$$\xymatrix@C=0.6cm@R0.6cm{
V \ar@{ >->}[r]^f \ar@{=}[d] &A \ar@{->>}[r]^g \ar[d] &B \ar[d]^h\\
V \ar@{ >->}[r] &I^V \ar@{->>}[r] &\Omega^-V
}
$$
be a commutative diagram satisfying $V\in \V$ and $I^V\in \mathcal I$. Then the sequence
$$0\rightarrow H(A)\xrightarrow{H(g)} H(B)\xrightarrow{H(h)} H(\Omega^-V)$$
is exact in $\underline \h$.
\end{lem}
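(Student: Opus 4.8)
The plan is to obtain this as the formal dual of Lemma~\ref{8.9}. Passing to the opposite exact category $\B^{\op}$ turns the cotorsion pair $(\U,\V)$ into $(\V,\U)$, interchanges $\B^+$ with $\B^-$, $\mathcal P$ with $\mathcal I$ and $\Omega$ with $\Omega^-$, fixes $\h$, and (by Proposition~\ref{8.6}) fixes the functor $H$; under this dictionary the hypotheses and conclusion of Lemma~\ref{8.9} become exactly those stated here. So I would just run the argument of Lemma~\ref{8.9} in the dual, in four steps.

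\emph{Reduction to $A,B\in\B^+$.} Dually to the pull-back of $g$ and $\gamma_B$ used there, I would form the push-out of $g\colon A\twoheadrightarrow B$ along $\alpha_A\colon A\rightarrowtail A^+$. By the dual of Proposition~\ref{7} this yields a $3\times3$ diagram with short exact rows and columns in which $V\rightarrowtail A^+\twoheadrightarrow M$ and $B\rightarrowtail M\twoheadrightarrow U^0$ are short exact, where $U^0=\Coker\alpha_A\in\U$; then $M\in\B^+$ by the dual of \cite[Lemma~2.10]{L}, and $B\rightarrowtail M\twoheadrightarrow U^0$ is a reflection sequence for $B$ (dual to the coreflection sequence for $A$ produced in Lemma~\ref{8.9}). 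Using that $I^V$ is injective I would extend $A\to I^V$ along $\alpha_A$ and compose with $I^V\twoheadrightarrow\Omega^-V$; the universal property of the push-out then supplies a map $h'\colon M\to\Omega^-V$ so that $V\rightarrowtail A^+\twoheadrightarrow M$ together with $h'$ is again an instance of the displayed diagram whose right-hand square is a pull-back by Proposition~\ref{PO}. Since $H(\alpha_A)$ and $H(B\rightarrowtail M)$ are isomorphisms by Proposition~\ref{8.7}(d), replacing $(A,B,g,h)$ by $(A^+,M,\bar g,h')$ changes nothing, so I may assume $A,B\in\B^+$; moreover $\Omega^-V\in\B^+$ by Lemma~\ref{8.00}.

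\emph{Passage to $\Hom$-sequences and the two exactness statements.} On $\B^+$ the functor $H$ coincides with $\sigma^-$ (Proposition~\ref{8.2}(b) and Proposition~\ref{8.6}), and $\Hom_{\uB}(Q,\underline{\gamma_{(-)}})$ is bijective for $Q\in\h$ (Proposition~\ref{81}(b)); assembling the dual of the $\Hom$-ladder from Lemma~\ref{8.9}, the claim reduces by Yoneda in $\underline\h$ to exactness, for every $Q\in\h$, of
$$0\to\Hom_{\uB}(Q,A)\xrightarrow{\ \underline g\circ-\ }\Hom_{\uB}(Q,B)\xrightarrow{\ \underline h\circ-\ }\Hom_{\uB}(Q,\Omega^-V),$$
the composite vanishing because $hg$ factors through $I^V\in\mathcal I$ and $H(\mathcal I)=0$. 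For injectivity of $\underline g\circ-$, given $r\colon Q\to A$ with $\underline{gr}=0$, I would attach to $B\in\B^+$ a short exact sequence $V_B\rightarrowtail W_B\xrightarrow{w_B}B$ with $W_B\in\W$ and the pull-back square expressing $W_B$ as $B\times_{\Omega^-V_B}I^{V_B}$ (where $V_B\rightarrowtail I^{V_B}\twoheadrightarrow\Omega^-V_B$, $I^{V_B}\in\mathcal I$); using $\Ext^1_\B(\W,V_B)=0$, $\Ext^1_\B(W_B,\Ker g)=0$ and the pull-back property, I would produce $c\colon Q\to W_B$ with $w_Bc=gr$ and $d\colon W_B\to A$ with $gd=w_B$, so that $g(r-dc)=0$; then $r-dc$ factors through $\Ker g\in\V$, hence $\underline{r-dc}=0$ by Lemma~\ref{1} (as $Q\in\B^-$), and $\underline r=0$ since $dc$ factors through $W_B\in\W$. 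For exactness in the middle, if $\underline{hr'}=0$ for $r'\colon Q\to B$, then $hr'$ factors through $q\colon I^V\twoheadrightarrow\Omega^-V$ (using $\Ext^1_\B(\W,V)=0$), and since the right square of the reduced diagram exhibits $A$ as the pull-back of $B\xrightarrow{h}\Omega^-V\xleftarrow{q}I^V$, this yields $\tilde r\colon Q\to A$ with $g\tilde r=r'$.

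I do not anticipate a genuinely new obstacle; the proof is the dual of Lemma~\ref{8.9}. The points needing care are the bookkeeping of the dualization — in the reduction one \emph{pushes out} along the reflection approximation $\alpha_A$ rather than pulling back, and in the injectivity step the auxiliary resolution is hung on $B$ using $B\in\B^+$ (dual to hanging it on the source object using $\B^-$ before) — together with checking that the reduction leaves the diagram in the required shape, in particular that the replaced right-hand square is again a pull-back and that $h$ is replaced compatibly.
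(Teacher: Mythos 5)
Your proposal is correct and coincides with the paper's approach: the paper simply states Lemma~\ref{8.10} with the remark ``Dually, we have the following'' after proving Lemma~\ref{8.9}, and you carry out precisely that dualization (push-out along $\alpha_A$ in place of pull-back along $\gamma_B$, the reflection sequence for $B$ in place of the coreflection sequence for $A$, covariant $\Hom_{\uB}(Q,-)$ in place of contravariant $\Hom_{\uB}(-,Q)$). The worked-out details match the dual argument.
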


Now we are ready to prove Theorem \ref{8.11}.

\begin{proof}
Let $\xymatrix{A \ar@{ >->}[r]^{f} &B \ar@{->>}[r]^{g} &C}$
be any short exact sequence in $\B$. By Proposition \ref{PO}, we can get the following commutative diagram:
$$\xymatrix@C=0.6cm@R0.6cm{
\Omega U^A \ar@{ >->}[r]^b \ar[d]_a &P_{U^A} \ar[d] \ar@{->>}[r] &U^A \ar@{=}[d]\\
A \ar@{ >->}[r]^{v^A} \ar@{}[dr]|{PO} \ar@{ >->}[d]_f &V^A \ar@{->>}[r] \ar@{ >->}[d]^e &U^A \ar@{=}[d]\\
B \ar@{ >->}[r]_c \ar@{->>}[d]_g &D \ar@{->>}[r] \ar@{->>}[d]^d &U^A\\
C \ar@{=}[r] &C.
}
$$
From the first and second row from the top, we get an exact sequence $H(\Omega U^A) \xrightarrow{H(a)} H(A) \rightarrow 0$ by Lemma \ref{8.9}. From the first and the third row from the top, we get an exact sequence $H(\Omega U^A) \xrightarrow{H(fa)} H(B) \xrightarrow{H(c)} H(D) \rightarrow 0$ by Lemma \ref{8.9}. From the middle column, we get an exact sequence $0\rightarrow H(D)\xrightarrow{H(d)} H(C)$ by Lemma \ref{8.10}.
Now we can obtain an exact sequence $H(A)\xrightarrow{H(f)} H(B)\xrightarrow{H(g)} H(C)$.
\end{proof}

Now we prove the following general observation on half exact functors.

\begin{cor}\label{8.15}
Let $\mathcal A$ be an abelian category and $F:\B\rightarrow \mathcal A$ be a half exact functor satisfying $F(\mathcal P)=0$ and $F(\mathcal I)=0$. Then for any short exact sequence
$\xymatrix{A \ar@{ >->}[r]^{f} &B \ar@{->>}[r]^{g} &C}$
in $\B$, there exist morphisms $h: C\rightarrow \Omega^- A$ and $h': \Omega C\rightarrow A$ such that the following sequence
\begin{align*}
\cdots \xrightarrow{F(\Omega h')} F(\Omega A) \xrightarrow{F(\Omega f)} F(\Omega B) \xrightarrow{F(\Omega g)} F(\Omega C) \xrightarrow{F(h')} F(A) \xrightarrow{F(f)} F(B)\\
\xrightarrow{F(g)} F(C) \xrightarrow{F(h)} F(\Omega^- A) \xrightarrow{F(\Omega^- f)} F(\Omega^- B) \xrightarrow{F(\Omega^- g)} F(\Omega^- C) \xrightarrow{F(\Omega^- h)} \cdots
\end{align*}
is exact in $\mathcal A$. Moreover, If we have a commutative diagram of short exact sequences
$$\xymatrix{
A \ar@{ >->}[r]^{f} \ar[d]^a &B \ar@{->>}[r]^{g} \ar[d]^b &C \ar[d]^c\\
A_1 \ar@{ >->}[r]^{f_1} &B_1 \ar@{->>}[r]^{g_1} &C_1
}
$$
we can get a commutative diagram of long exact sequences in $\mathcal A$
$$\xymatrix{
\cdots \ar[r] &F(\Omega C) \ar[d]_-{F(\Omega c)} \ar[r]^-{F(h')} &F(A) \ar[r]^{F(f)} \ar[d]^{F(a)} &F(B) \ar[r]^{F(g)} \ar[d]^{F(b)} &F(C) \ar[d]^{F(c)} \ar[r]^-{F(h)}  &F(\Omega^- A) \ar[d]^{F(\Omega^- a)} \ar[r] &\cdots\\
\cdots \ar[r] &F(\Omega C_1)  \ar[r]_-{F(h_1')} &F(A_1) \ar[r]_{F(f_1)} &F(B_1) \ar[r]_{F(g_1)}  &F(C_1)  \ar[r]_-{F(h_1)}  &F(\Omega^- A_1) \ar[r] &\cdots
}
$$
\end{cor}

\begin{proof}
Since $F(\mathcal P)=0$ (resp. $F(\mathcal I)=0$), the functor $F$ can be regarded as a functor from $\B/\mathcal P$ (resp. $\B/\mathcal I$) to $\mathcal A$.\\
For convenience, we fix the following commutative diagram:
$$\xymatrix@C=0.6cm@R0.6cm{
\Omega A \ar@{ >->}[r]^{q_A} \ar[d]_{\Omega f} &P_A \ar@{->>}[r]^{p_A} \ar[d]^r &A \ar[d]^f\\
\Omega B \ar@{ >->}[r]^{q_B} \ar[d]_{\Omega g} &P_B \ar@{->>}[r]^{p_B} \ar[d]^k &B \ar[d]^g\\
\Omega C \ar@{ >->}[r]_{q_C} &P_C \ar@{->>}[r]_{p_C} &C.
}
$$
Since
$\xymatrix{A \ar@{ >->}[r]^{f} &B \ar@{->>}[r]^{g} &C}$
admits two commutative diagrams
$$\xymatrix@C=0.6cm@R0.6cm{\Omega C \ar@{ >->}[r]^{q_C} \ar[d]_{h'} &P_C \ar@{->>}[r]^{p_C} \ar[d]^{l} &C \ar@{=}[d]\\
A \ar@{ >->}[r]_{f} &B \ar@{->>}[r]_{g} &C,
} \quad \xymatrix@C=0.6cm@R0.6cm{
A \ar@{ >->}[r]^{f} \ar@{=}[d] &B \ar@{->>}[r]^{g} \ar[d]^i &C \ar[d]^h\\
A \ar@{ >->}[r] &I^A \ar@{->>}[r]_{j} &\Omega^-A
}
$$
we get two short exact sequences by Proposition \ref{PO}:
$$\xymatrix{
\Omega C \ar@{ >->}[r]^-{\svecv{-q_C}{h'}} &P_C\oplus A \ar@{->>}[r]^-{\svech{l}{f}} &B,\\
} \quad \xymatrix{
B \ar@{ >->}[r]^-{\svecv{i}{g}} &I^A\oplus C \ar@{->>}[r]^-{\svech{-j}{h}} &\Omega^-A.
}
$$
They induce two exact sequences
$$\xymatrix{
F(\Omega C) \ar[r]^{F(h')} &F(A) \ar[r]^{F(f)} &F(B),\\
} \quad \xymatrix{
F(B) \ar[r]^{F(g)} &F(C) \ar[r]^-{F(h)} &F(\Omega^-A).
}
$$
by Theorem \ref{8.11}. Now it is enough to show that
\begin{itemize}
\item[(a)] $\xymatrix{A \ar@{ >->}[r]^{f} &B \ar@{->>}[r]^{g} &C}$ induces an exact sequence
$$F(\Omega A) \xrightarrow{F(\Omega f)} F(\Omega B) \xrightarrow{F(\Omega g)} F(\Omega C)\xrightarrow{F(h')} F(A).$$

\item[(b)] $\xymatrix{A \ar@{ >->}[r]^{f} &B \ar@{->>}[r]^{g} &C}$ induces an exact sequence
$$F(C) \xrightarrow{F(h)} F(\Omega^- A) \xrightarrow{F(\Omega^- f)} F(\Omega^- B) \xrightarrow{F(\Omega^- g)} F(\Omega^- C).$$
\end{itemize}
We only show the first one, the second is by dual.\\
The short exact sequence $\xymatrix{\Omega C \ar@{ >->}[r]^-{\svecv{-q_C}{h'}} &P_C\oplus A \ar@{->>}[r]^-{\svech{l}{f}} &B}$ admits the following commutative diagram
$$\xymatrix{\Omega B \ar@{ >->}[r]^{q_B} \ar[d]_x &P_B \ar@{->>}[r]^{p_B} \ar[d]^-{\svecv{k'}{m}} &B \ar@{=}[d]\\
\Omega C \ar@{ >->}[r]_-{\svecv{-q_C}{h'}} &P_C\oplus A \ar@{->>}[r]_-{\svech{l}{f}} &B
}
$$
which induces the following exact sequence
$$\xymatrix{\Omega B \ar@{ >->}[r]^-{\svecv{q_B}{-x}} &P_A\oplus \Omega C \ar@{->>}[rr]^-{\left(\begin{smallmatrix}
     k' &-q_C\\
     m &h'
    \end{smallmatrix}\right)} &&P_C\oplus A.}$$
We prove that $x+\Omega g$ factors through $\mathcal P$.\\
Since $fm+lk'=p_B\Rightarrow gfm+glk'=gp_B\Rightarrow p_Ck'=p_Ck$, there exists a morphism $n:P_B\rightarrow \Omega C$ such that $k-k'=q_Cn$. Thus we have $q_Cnq_B=kq_B-k'q_B=q_C\Omega g+q_Cx$, which implies that $x+\Omega g=nq_B$.\\
Hence we obtain an exact sequence $F(\Omega B) \xrightarrow{F(\Omega g)} F(\Omega C) \xrightarrow{F(h')} F(A).$\\
Since we have the following commutative diagram
$$\xymatrix{
\Omega A \ar@{ >->}[r]^{q_A} \ar[d]_{x'} &P_A \ar@{->>}[rr]^{p_A} \ar[d]^-{\svecv{s}{t}} &&A \ar[d]^-{\svecv{0}{1}}\\
\Omega B \ar@{ >->}[r]_-{\svecv{-q_B}{x}} &P_B\oplus \Omega C \ar@{->>}[rr]_-{\left(\begin{smallmatrix}
     k' &-q_C\\
     m &h'
    \end{smallmatrix}\right)} &&P_C\oplus A
}
$$
we can show that $x'+\Omega f$ factors through $\mathcal P$ using the same method.\\
Hence we get the following exact sequence
$$F(\Omega A) \xrightarrow{F(\Omega f)} F(\Omega B) \xrightarrow{F(\Omega g)} F(\Omega C) \xrightarrow{F(h')} F(A).$$
Now we obtain a long exact sequence
\begin{align*}
\cdots \xrightarrow{F(\Omega h')} F(\Omega A) \xrightarrow{F(\Omega f)} F(\Omega B) \xrightarrow{F(\Omega g)} F(\Omega C) \xrightarrow{F(h')} F(A) \xrightarrow{F(f)} F(B)\\
\xrightarrow{F(g)} F(C) \xrightarrow{F(h)} F(\Omega^- A) \xrightarrow{F(\Omega^- f)} F(\Omega^- B) \xrightarrow{F(\Omega^- g)} F(\Omega^- C) \xrightarrow{F(\Omega^- h)} \cdots
\end{align*}
in $\underline \h$. Now let
$$\xymatrix@C=0.6cm@R0.6cm{
A \ar@{ >->}[r]^{f} \ar[d]^a &B \ar@{->>}[r]^{g} \ar[d]^b &C \ar[d]^c\\
A_1 \ar@{ >->}[r]^{f_1} &B_1 \ar@{->>}[r]^{g_1} &C_1
}
$$
a commutative diagram of short exact sequences, it is enough to check that we can get the following commutative diagram
$$\xymatrix@C=0.6cm@R0.6cm{
F(\Omega C) \ar[d]_-{F(\Omega c)} \ar[r]^-{F(h')} &F(A) \ar[r]^{F(f)} \ar[d]^{F(a)} &F(B) \ar[d]^{F(b)}\\
F(\Omega C_1)  \ar[r]_-{F(h_1')} &F(A_1) \ar[r]_{F(f_1)} &F(B_1)
}
$$
by applying $F$.\\
Since we have the following commutative diagrams
$$\xymatrix@C=0.6cm@R0.6cm{\Omega C \ar@{ >->}[r]^{q_C} \ar[d]_{h'} &P_C \ar@{->>}[r]^{p_C} \ar[d]^{l} &C \ar@{=}[d]\\
A \ar@{ >->}[r]_{f} &B \ar@{->>}[r]_{g} &C,
} \quad \xymatrix@C=0.6cm@R0.6cm{\Omega C_1 \ar@{ >->}[r]^{q_{C_1}} \ar[d]_{h_1'} &P_{C_1} \ar@{->>}[r]^{p_{C_1}} \ar[d]^{l_1} &C_1 \ar@{=}[d]\\
A_1 \ar@{ >->}[r]_{f_1} &B_1 \ar@{->>}[r]_{g_1} &C_1,
}
$$
They induce the following commutative diagram
$$\xymatrix{
\Omega C \ar@/_15pt/[dd]_{ah'} \ar@{.>}[d]^{\Omega c} \ar@{ >->}[r]^{q_C} &P_C \ar@{.>}[d]^{p} \ar@/_15pt/[dd]_(.75){bl} \ar@{->>}[r]^{p_C} &C \ar[d]^c\\
\Omega C_1 \ar@{ >->}[r] \ar[d]^{h_1'} &P_{C_1} \ar@{->>}[r]^{p_{C_1}} \ar[d]^{l_1} &C_1 \ar@{=}[d]\\
A_1 \ar@{ >->}[r]_{f_1} &B_1 \ar@{->>}[r]_{g_1} &C_1
}
$$
Now we get the following commutative diagram
$$\xymatrix{
\Omega C \ar[r]^-{\svecv{-q_C}{h'}} \ar[d]_{\Omega c} &P_C\oplus A \ar[r]^-{\svech{l}{f}} \ar[d]^{\left(\begin{smallmatrix}
     p &0\\
     0 &a
    \end{smallmatrix}\right)} &B \ar[d]^b\\
\Omega C_1 \ar[r]_-{\svecv{-q_{C_1}}{h_1'}} &P_{C_1}\oplus A_1 \ar[r]_-{\svech{l_1}{f_1}} &B_1
}
$$
which implies the following commutative diagram in $\mathcal A$ by applying $F$.
$$\xymatrix@C=0.6cm@R0.6cm{
F(\Omega C) \ar[d]_-{F(\Omega c)} \ar[r]^-{F(h')} &F(A) \ar[r]^{F(f)} \ar[d]^{F(a)} &F(B) \ar[d]^{F(b)}\\
F(\Omega C_1)  \ar[r]_-{F(h_1')} &F(A_1) \ar[r]_{F(f_1)} &F(B_1)
}
$$
\end{proof}

Since $H(\mathcal P)=H(\mathcal I)=0$, we can see from this proposition that $H$ has the property we claimed in the introduction.

For two subcategories $\B_1,\B_2\subseteq \B$, we denote $\add(\B_1*\B_2)$ by the subcategory which consists by the objects $X$ which admits a short exact sequence
$\xymatrix{B_1\ar@{ >->}[r] &X\oplus Y \ar@{->>}[r] &B_2}$
where $B_1\in \B_1$ and $B_2\in \B_2$.


\begin{prop}\label{8.14}
For any cotorsion pair $(\U,\V)$ on $\B$ 
and any object $B\in \B$, the following are equivalent.
\begin{itemize}
\item[(a)] $H(B)=0$.

\item[(b)] $B\in \add(\U*\V)$. 

\end{itemize}
\end{prop}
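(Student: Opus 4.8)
The plan is to treat the two implications separately; $(b)\Rightarrow(a)$ is immediate from half exactness, while all of the content lies in $(a)\Rightarrow(b)$.

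For $(b)\Rightarrow(a)$, if $B\in\add(\U*\V)$, pick a short exact sequence $\xymatrix{U \ar@{ >->}[r] &B\oplus Y \ar@{->>}[r] &V}$ with $U\in\U$ and $V\in\V$. Since $H$ is half exact (Theorem \ref{8.11}), $H(U)\to H(B\oplus Y)\to H(V)$ is exact in $\underline\h$; as $H(U)=0=H(V)$ by Proposition \ref{8.7}(c), this forces $H(B\oplus Y)=0$, and hence $H(B)=0$ because $H$ is additive (Proposition \ref{8.7}(a)).

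For $(a)\Rightarrow(b)$, assume $H(B)=0$. By Proposition \ref{8.6} the functor $H$ is also given by $\sigma^+\circ\sigma^-\circ\pi$, so $\sigma^+(\sigma^-(B))=0$; thus $B^-=\sigma^-(B)\in\U$ by Proposition \ref{8.2}(c) (using that $\U$ is closed under direct summands). I would then return to the construction of $B^-$ in Definition \ref{10}: the lower-left square of diagram (2.2),
$$\xymatrix{
B^- \ar@{ >->}[r] \ar@{->>}[d]_{\gamma_B} &W_0 \ar@{->>}[d]\\
B \ar@{ >->}[r]_{v^B} &V^B,
}$$
is both a push-out and a pull-back by Proposition \ref{7}(a), with $V^B\in\V$. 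Feeding this push-out of inflations into Proposition \ref{PO}(b) yields a short exact sequence $\xymatrix{B^- \ar@{ >->}[r] &B\oplus W_0 \ar@{->>}[r] &V^B}$ in $\B$. Since $B^-\in\U$ and $V^B\in\V$, this exhibits $B$ as an object of $\add(\U*\V)$ (with auxiliary summand $W_0$), which is what was wanted.

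The only delicate step is the first one of $(a)\Rightarrow(b)$, i.e.\ passing from the abstract vanishing $H(B)=0$ to the concrete statement $B^-\in\U$; once that is available, the implication is essentially forced, requiring no diagram chase beyond the bicartesian square already contained in the construction of $B^-$ and a single use each of Propositions \ref{7} and \ref{PO}. Dually, writing $H$ as $\sigma^-\circ\sigma^+\circ\pi$ and applying Proposition \ref{8.3}(c) gives $B^+\in\V$, and the mirror-image argument based on the push-out square inside diagram (2.1) of Definition \ref{re} leads to the same conclusion.
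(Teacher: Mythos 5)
Your proof is correct and takes essentially the same approach as the paper, up to duality: for $(a)\Rightarrow(b)$ the paper uses the factorization $H=\sigma^-\circ\sigma^+\circ\pi$ to deduce $B^+\in\V$ and reads off the short exact sequence $U_B\rightarrowtail B\oplus W^0\twoheadrightarrow B^+$ from the bicartesian square in diagram (2.1), whereas you use $H\simeq\sigma^+\circ\sigma^-\circ\pi$ to get $B^-\in\U$ and read off $B^-\rightarrowtail B\oplus W_0\twoheadrightarrow V^B$ from diagram (2.2) --- the ``dual argument'' you mention at the end is literally what the paper does. Your $(b)\Rightarrow(a)$ direction just spells out the paper's one-line appeal to Theorem \ref{8.11} and Proposition \ref{8.7}.
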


\begin{proof}
We first prove that (a) implies (b).\\
By Proposition \ref{8.3}, since $H(B)=\sigma^-\circ\sigma^+(B)=0$, we get that $B^+\in\V$, hence from commutative diagram \eqref{F1}
we get a short exact sequence $\xymatrix{U_B \ar@{ >->}[r] &B\oplus W^0 \ar@{->>}[r] &B^+}$, which implies that $B\in \add(\U*\V)$.\\
We show that (b) implies (a).\\
This is followed by Theorem \ref{8.11} and Proposition \ref{8.7}.
\end{proof}

We denote $\add(\U*\V)$ by $\K$, which is called the \emph{kernel} of $H$.

\begin{rem}\label{ker}
Since $\Ext_\B(\U,\V)=0$, the subcategory $\V*\U=\{ U\oplus V \text{ } | \text{ } U\in \U,V\in \V \}$ is contained in $\U*\V$. Then $\K*\K=\add(\U*\V)*\add(\U*\V)=\add(\U*(\V*\U)*\V)\subseteq\add(\U*\U*\V*\V)=\K$, which implies that $\K$ is closed under extension.
\end{rem}

\begin{prop}\label{eqfunctor}
For any half exact functor $G:\B\rightarrow \underline \h$ such that $G(\K)=0$ and $G|_{\h}=\pi|_{\h}$, we get $G\simeq H$.
\end{prop}

To prove this proposition, we need the following lemma.

\begin{lem}\label{iso}
Let $G:\B\rightarrow \underline \h$ be a half exact functor such that $G(\K)=0$. Then
\begin{itemize}
\item[(a)] from the commutative diagram \eqref{F1}, we can get an isomorphism $G(\alpha_B)$.

\item[(b)] from the commutative diagram \eqref{F2}, we can get an isomorphism $G(\gamma_B)$.
\end{itemize}
\end{lem}

\begin{proof}
We only show (a), (b) is by dual\\
From the commutative diagram \eqref{F1}, we have an short exact sequence $\xymatrix{U_B\ar@{ >->}[r]^-{\svecv{-u_B}{w'}} &B\oplus W^0 \ar@{->>}[r]^-{\svech{\alpha_B}{w}} &B^+}$. Apply $G$ to this sequence, we get an exact sequence $0 \rightarrow G(B) \xrightarrow{G(\alpha_B)} G(B^+)$ in $\underline \h$. Apply $G$ again to the short exact sequence $\xymatrix{B \ar@{ >->}[r]^{\alpha_B} &B^+ \ar@{->>}[r] &U^0}$, we get an exact sequence $G(B) \xrightarrow{G(\alpha_B)} G(B^+) \rightarrow 0$ in $\underline \h$. Hence $G(\alpha_B)$ is both monomorphic and epimorphic in an abelian category $\underline \h$, which means $G(\alpha_B)$ is an isomorphism.
\end{proof}

Now we are ready to prove Proposition \ref{eqfunctor}.

\begin{proof}
For any object $B\in \B$, we have the following morphisms $B \xrightarrow{\alpha_B} B^+  \xleftarrow{\gamma_{B^+}} (B^+)^-$ where $B^+\in \B^+$ and $(B^+)^-\in \h$. Since $G$ is half exact and $G(\K)=0$, by Lemma \ref{iso}, $G(\alpha_B)$ and $G(\gamma_{B^+})$ are isomorphisms in $\underline \h$. Since $G|_{\h}=\pi|_{\h}$, we obtain $G((B^+)^-)=(B^+)^-=H(B)$. Hence we get a morphism $\phi_B:G(B) \xrightarrow{G(\gamma_{B^+})^{-1}G(\alpha_B)} H(B)$. Let $f:X\rightarrow Y$ be a morphism in $\B$, we get the following commutative diagrams
$$\xymatrix{
X \ar[r]^{\alpha_X} \ar[d]_f &X^+ \ar[d]^{f^+}\\
Y \ar[r]_{\alpha_Y} &Y^+,\\
} \quad
\xymatrix{
(X^+)^- \ar[r]^{\gamma_{X^+}} \ar[d]_{(f^+)^-} &X^+ \ar[d]^{f^+}\\
(Y^+)^- \ar[r]_{\gamma_{Y^+}} &Y^+.}
$$
Since $\underline {(f^+)^-}=H(f)$, we get the following commutative diagram
$$\xymatrix{
G(X) \ar[r]^{G(\alpha_X)} \ar[d]_{G(f)} &G(X^+) \ar[d]^{G(f^+)} \ar[r]^-{G(\gamma_{X^+})^{-1}} &H(X) \ar[d]^{H(f)}\\
G(Y) \ar[r]_{G(\alpha_Y)} &G(Y^+) \ar[r]_-{G(\gamma_{Y^+})^{-1}} &H(Y)
}
$$
Now we can define a natural transformation $\phi:G\rightarrow H$ such that $\phi_B:G(B) \xrightarrow{G(\gamma_{B^+})^{-1}G(\alpha_B)} H(B)$, which is in fact an natural isomorphism. Hence $G\simeq H$.
\end{proof}

\section{Functors between different hearts}

The half exact functor constructed in the previous section gives a useful way to study the relationship between the hearts of different cotorsion pairs on $\B$. First, we start with fixing some notations

Let $i\in \{ 1,2\}$. Let $(\U_i,\V_i)$ be a cotorsion pair on $\B$ and $\W_i=\U_i \cap \V_i$. Let $\B^+_i$ and $\B^-_i$ be the subcategories of $B$ defined in (1.1) and (1.2).

Let $\h_i:=\B^+_i\cap\B^-_i$, then $\h_i/\W_i$ is the heart of $(\U_i,\V_i)$. Let $\pi_i:\B\rightarrow \B/\W_i$ be the canonical functor and $\iota_i: \h_i/\W_i \hookrightarrow \B/\W_i$ be the inclusion functor.

If $H_2(\W_1)=0$, which means $\W_1\subseteq \K_2$ by Proposition \ref{8.14}, then there exists a functor $h_{12}:\B/\W_1\rightarrow \h_2/\W_2$ such that $H_2=h_{12}\pi_1$.
$$\xymatrix@C=0.6cm@R0.6cm{
\B \ar[dr]_{H_2} \ar[rr]^{\pi_1} &&\B/\W_1 \ar@{.>}[dl]^{h_{12}}\\
&\h_2/\W_2
}$$
Hence we get a functor $\beta_{12}:=h_{12}\iota_1:\h_1/\W_1\rightarrow \h_2/\W_2$.

\begin{lem}\label{equi}
The following conditions are equivalent to each other.
\begin{itemize}
\item [(a)] $H_1(\U_2)=H_1(\V_2)=0$.
\item [(b)] $\K_2\subseteq\K_1$.
\end{itemize}
\end{lem}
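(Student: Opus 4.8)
The plan is to reduce both conditions to the description of the kernel of $H_1$ supplied by Proposition \ref{8.14}, which says that $H_1(B)=0$ if and only if $B\in\add(\U_1\ast\V_1)$. So (a) is exactly the statement $\U_2\cup\V_2\subseteq\add(\U_1\ast\V_1)$, and the task is to see that this is equivalent to the a priori stronger-looking inclusion $\add(\U_2\ast\V_2)\subseteq\add(\U_1\ast\V_1)$.

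For the implication (b)$\Rightarrow$(a) I would first record the trivial observation that $\U_2\subseteq\add(\U_2\ast\V_2)$ and $\V_2\subseteq\add(\U_2\ast\V_2)$: any $U\in\U_2$ fits into the short exact sequence $\xymatrix{U\ar@{ >->}[r]^-{\id} &U\ar@{->>}[r] &0}$ with $U\in\U_2$ and $0\in\V_2$ (take $Y=0$ in the definition of $\add(\U_2\ast\V_2)$), and dually $V\in\V_2$ fits into $\xymatrix{0\ar@{ >->}[r] &V\ar@{->>}[r]^-{\id} &V}$. Hence under hypothesis (b) both $\U_2$ and $\V_2$ lie in $\add(\U_1\ast\V_1)$, and Proposition \ref{8.14} gives $H_1(\U_2)=H_1(\V_2)=0$.

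For (a)$\Rightarrow$(b) I would take an arbitrary $X\in\add(\U_2\ast\V_2)$, so that there is a short exact sequence $\xymatrix{U\ar@{ >->}[r] &X\oplus Y\ar@{->>}[r] &V}$ with $U\in\U_2$ and $V\in\V_2$, and apply the half exact functor $H_1$ (Theorem \ref{8.11}), which yields an exact sequence $H_1(U)\to H_1(X\oplus Y)\to H_1(V)$ in $\underline\h_1$. By hypothesis $H_1(U)=0=H_1(V)$, so exactness at the middle term forces $H_1(X\oplus Y)=0$; since $H_1$ is additive (Proposition \ref{8.7}(a)), $H_1(X)$ is a direct summand of $H_1(X\oplus Y)$, hence $H_1(X)=0$, and Proposition \ref{8.14} gives $X\in\add(\U_1\ast\V_1)$.

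The argument is essentially formal once Proposition \ref{8.14} and the half-exactness of $H_1$ are in hand; there is no genuine obstacle, only the minor bookkeeping that $\U_2,\V_2\subseteq\add(\U_2\ast\V_2)$ via the trivial short exact sequences above and that an exact three-term sequence with vanishing outer terms has vanishing middle term.
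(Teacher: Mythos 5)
Your proposal is correct. The (b)$\Rightarrow$(a) direction is essentially the paper's (both reduce to Proposition \ref{8.14} after noting the trivial inclusions $\U_2,\V_2\subseteq\add(\U_2*\V_2)$), but your (a)$\Rightarrow$(b) argument is genuinely different. You take the defining short exact sequence $U\rightarrowtail X\oplus Y\twoheadrightarrow V$ for $X\in\add(\U_2*\V_2)$, apply the half exact functor $H_1$ to obtain $0=H_1(U)\to H_1(X\oplus Y)\to H_1(V)=0$ exact in the middle, deduce $H_1(X\oplus Y)=0$ and hence $H_1(X)=0$ by additivity, and invoke Proposition \ref{8.14} once more. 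The paper instead stays entirely on the object level: it picks $A,B$ with $U\oplus A,\ V\oplus B\in\U_1*\V_1$, forms the short exact sequence $U\oplus A\rightarrowtail X\oplus Y\oplus A\oplus B\twoheadrightarrow V\oplus B$, and then proves the closure identity $\add\bigl((\U_1*\V_1)*(\U_1*\V_1)\bigr)=\add(\U_1*\V_1)$ by associativity of $*$ together with the splitting of extensions of $\U_1$ by $\V_1$. Your route is shorter and more conceptual, exploiting the already-established half exactness of $H_1$; the paper's route is more elementary at this step (no functor is applied) and as a by-product records a useful closure property of $\add(\U_1*\V_1)$ under $*$. Both are sound.
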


\begin{proof}
By Proposition \ref{8.7} and Theorem \ref{8.11}, (b) implies (a).  Now we prove that (a) implies (b).\\
By Proposition \ref{8.14}, we get $\U_2\subseteq \K_1$ and $\V_2\subseteq \K_1$. Hence $\K_2=\add(\U_2*\V_2)\subseteq \add(\K_1*\K_1)=\add\K_1=\K_1$.
\end{proof}

\begin{prop}\label{nameless}
The functor $\beta_{12}$ is half exact. Moreover, if $\K_1\subseteq\K_2$, then $\beta_{12}$ is exact and $(\h_1\cap\K_2)/\W_1$ is a Serre subcategory of $\h_1/\W_1$.
\end{prop}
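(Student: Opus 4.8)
The plan is to deduce everything from the half-exactness of $H_2$ (Theorem \ref{8.11}), the computation of the kernel of $H_2$ (Proposition \ref{8.14}), and the description of short exact sequences, kernels and cokernels in the heart $\h_1/\W_1$ coming from \cite{L} (the exact-category analogue of \cite{N}, \cite{AN}). First observe that, since $H_2(\W_1)=0$ — which is precisely the hypothesis making $\beta_{12}$ available, by Proposition \ref{8.14} — the functor $H_2$ annihilates every morphism factoring through $\W_1$, so it factors as $H_2=h_{12}\pi_1$ (this $h_{12}$ is the one in the text) and $h_{12}$ carries isomorphisms of $\uB_1$ to isomorphisms; concretely $\beta_{12}(\underline X)=H_2(X)$ and $\beta_{12}(\underline \psi)=H_2(\psi)$ whenever $X\in\h_1$ and $\psi$ is a morphism of $\B$ lifting $\underline\psi$. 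For the half-exactness I would take a short exact sequence $0\to\underline X\xrightarrow{\underline a}\underline Y\xrightarrow{\underline b}\underline Z\to 0$ in $\h_1/\W_1$, represent $\underline a$ by $a\colon X\to Y$ in $\B$, and push $a$ out along an inflation $X\rightarrowtail W$ with $W\in\W_1$ and cokernel in $\U_1$ (such an inflation exists because $X\in\B_1^-$); by Proposition \ref{PO} this yields a conflation $X\rightarrowtail W\oplus Y\twoheadrightarrow E$ in $\B$ for which the cokernel construction of \cite{L} gives $\underline Z\simeq H_1(E)$, with the cokernel projection $\underline b$ corresponding to $H_1(\underline j)$ where $j\colon Y\to E$ is the induced inflation.

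Now apply Theorem \ref{8.11} to that conflation: the sequence $H_2(X)\to H_2(W)\oplus H_2(Y)\to H_2(E)$ is exact at the middle term, and $H_2(W)=0$ by Proposition \ref{8.14} since $W\in\W_1\subseteq\add(\U_2*\V_2)$; hence $H_2(X)\xrightarrow{H_2(a)}H_2(Y)\xrightarrow{H_2(j)}H_2(E)$ is exact at $H_2(Y)$. Since $h_{12}$ preserves isomorphisms, the identifications $\beta_{12}(\underline X)=H_2(X)$, $\beta_{12}(\underline Y)=H_2(Y)$, $\beta_{12}(\underline Z)\simeq H_2(E)$ and $\beta_{12}(\underline a)=H_2(a)$, $\beta_{12}(\underline b)\simeq H_2(j)$ convert this into exactness of $\beta_{12}(\underline X)\to\beta_{12}(\underline Y)\to\beta_{12}(\underline Z)$ at $\beta_{12}(\underline Y)$; together with additivity of $\beta_{12}$ (Proposition \ref{8.7}(a)) this proves the first assertion.

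For the second assertion, assume in addition $\add(\U_1*\V_1)\subseteq\add(\U_2*\V_2)$; by Lemma \ref{equi}, applied with the roles of $1$ and $2$ interchanged, this means precisely $H_2(\U_1)=H_2(\V_1)=0$. A half-exact additive functor between abelian categories is exact as soon as it preserves monomorphisms and epimorphisms, so it suffices to check these two properties, and this is the step I expect to be the main obstacle. Using \cite{L} again, an epimorphism of $\h_1/\W_1$ is, up to the identifications above, $H_1$ of the inflation $j\colon Y\rightarrowtail E$ in a conflation $Y\rightarrowtail E\twoheadrightarrow U$ with $U\in\U_1$ (the cokernel construction), and dually a monomorphism is $H_1$ of the deflation $k\colon K\twoheadrightarrow Y$ in a conflation $V\rightarrowtail K\twoheadrightarrow Y$ with $V\in\V_1$ (the kernel construction). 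Applying Corollary \ref{8.15} to $H_2$ and these conflations, together with $H_2(U)=0$ and $H_2(V)=0$, forces $H_2(j)$ to be epic and $H_2(k)$ to be monic; it remains to see that under the present hypothesis $\beta_{12}$ of the heart-(co)kernel map is indeed $H_2(j)$, resp. $H_2(k)$, up to isomorphism, which follows from Corollary \ref{8.15} applied to the reflection and coreflection sequences — whose connecting morphisms factor through $\U_1$, resp. $\V_1$, by Definition \ref{8.4} — and from $H_2(\U_1)=H_2(\V_1)=0$. Hence $\beta_{12}$ is exact. The Serre statement is then formal: by Proposition \ref{8.14} for the pair $(\U_2,\V_2)$, an object $\underline X$ of $\h_1/\W_1$ lies in $(\h_1\cap\add(\U_2*\V_2))/\W_1$ if and only if $\beta_{12}(\underline X)=H_2(X)=0$, so this full subcategory is exactly the kernel of the exact functor $\beta_{12}$, and the kernel of an exact functor between abelian categories is closed under subobjects, quotients and extensions, i.e. it is a Serre subcategory.
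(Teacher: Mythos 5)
Your proof uses the same essential ingredients as the paper's: realize the kernel or cokernel of a morphism in $\h_1/\W_1$ by an explicit conflation in $\B$ whose third term lies in $\W_1$, then feed that conflation to Theorem~\ref{8.11}. For half-exactness you push $a\colon X\to Y$ out along an inflation $X\rightarrowtail W$ with $W\in\W_1$, which is dual to the paper's move of pulling $g\colon B\to C$ back along $W_C\twoheadrightarrow C$; both give a conflation ($X\rightarrowtail W\oplus Y\twoheadrightarrow E$, resp. $K_g\rightarrowtail B\oplus W_C\twoheadrightarrow C$) from which half-exactness follows using $H_2(\W_1)=0$. One point you should make explicit: you write $\underline Z\simeq H_1(E)$ and then assert $\beta_{12}(\underline Z)\simeq H_2(E)$, but this last step is only immediate once you know $E\in\h_1$, so that $H_1(E)=\underline E$ and the identification is at the level of $\B/\W_1$. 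If $E$ were not in $\h_1$, you would need $H_2$ to invert the $(\U_1,\V_1)$-(co)reflection sequences used to form $H_1(E)$, which requires $H_2(\U_1)=H_2(\V_1)=0$ — not available under the mere hypothesis $H_2(\W_1)=0$. The paper settles the analogous point by citing \cite[Lemma~4.1]{L} and \cite[Definition~3.8]{L} to place $K_g$ in $\h_1$, and you need the duals of these for $E$.

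For exactness, your route — invoke the criterion that a half-exact additive functor is exact as soon as it preserves monos and epis, then extract those from the heart (co)kernel conflations together with Corollary~\ref{8.15} — is valid but heavier than the paper's. The paper simply pushes $g$ out along $B\rightarrowtail W^B$, invokes \cite[Corollary~3.11]{L} to see that the pushout $C_g$ lies in $\U_1$ precisely because $\mu$ is epic, and applies Theorem~\ref{8.11} directly with $H_2(C_g)=0$ (and dually for monos). Moreover your extra appeal to Corollary~\ref{8.15} and to the factorization of the connecting morphisms through $\U_1$, $\V_1$ becomes unnecessary once $E\in\h_1$ is in place, since then $\beta_{12}(\underline b)=H_2(j)$ outright.

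For the Serre-subcategory claim, your argument — identify $(\h_1\cap\add(\U_2*\V_2))/\W_1$ with $\Ker\beta_{12}$ via Proposition~\ref{8.14} and use that the kernel of an exact functor between abelian categories is a Serre subcategory — is cleaner than the paper's, which obtains closure under subobjects and quotients from exactness of $\beta_{12}$ but verifies closure under extensions by a separate explicit computation with $\add(\U_2*\V_2)$.
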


\begin{proof}
Let $0\rightarrow A \xrightarrow{\rho} B \xrightarrow{\mu} C\rightarrow 0$ be a short exact sequence in $\h_1/\W_1$, then $\mu$ admits a morphism $g:B\twoheadrightarrow C$ such that $\pi_1(g)=\mu$. We get the following commutative diagram
$$\xymatrix@C=0.6cm@R0.6cm{
V_C \ar@{ >->}[r] \ar@{=}[d] &{K_g}  \ar@{->>}[r]^{k_g} \ar[d]^a &B \ar[d]^g\\
V_C \ar@{ >->}[r] &{W_C} \ar@{->>}[r]_{w_C} &C}
$$
where $V_C\in \V_1$ and $W_C\in \W_1$. Then we obtain a short exact sequence
$$\xymatrix{K_g \ar@{ >->}[r]^-{\svecv{-a}{k_g}} &B\oplus W_C \ar@{->>}[r]^-{\svech{g}{w_C}} &C.}$$
By \cite[{Lemma 4.1}]{L}, $K_g\in \B^-_1$. By \cite[{Definition 3.8}]{L}, $K_g\in \B^+_1$. Hence $K_g\in \h_1$.Apply $H_1$ to the above diagram and short exact sequence, we get a short exact seqeunce $0\rightarrow K_g\xrightarrow{\underline{k_g}} B\xrightarrow{\mu} C\rightarrow 0$. Hence $K_g\simeq A$ in $\h_1/\W_1$. By Theorem \ref{8.11}, We get the an exact sequence
$$H_2(K_g)\xrightarrow{H_2(k_g)} H_2(B) \xrightarrow{H_2(g)} H_2(C)$$
which implies the following following exact sequence
$$\beta_{12}(A) \xrightarrow{\beta_{12}(\rho)} \beta_{12}(B) \xrightarrow{\beta_{12}(\mu)} \beta_{12}(C).$$
Hence $\beta_{12}$ is half exact. Now we prove that if $\K_1\subseteq\K_2$, which means $H_2(\U_1)=0=H_2(\V_1)$, then $\beta_{12}$ is exact.\\
In this case, we only need to show that $\beta_{12}(\rho)$ is a monomorphism and $\beta_{12}(\mu)$ is an epimorphism. We show that $\beta_{12}(\mu)$ is an epimorphism, the other part is by dual.\\
We have the following commutative diagram
$$\xymatrix@C=0.6cm@R0.6cm{
B \ar@{ >->}[r]^{w^B} \ar[d]_{g} &{W^B} \ar@{->>}[r] \ar[d]^b &{U^B} \ar@{=}[d]\\
C \ar@{ >->}[r]_{c_g} &{C_g} \ar@{->>}[r]_s &{U^B}}
$$
where $W_B\in \W_1$ and $U^B\in \U_1$. Since $\mu$ is epimorphism, by \cite[{Corollary 3.11}]{L}, $C_g\in \U_1$. Since we have the following short exact sequence
$\xymatrix{B \ar@{ >->}[r]^-{\svecv{g}{-h}} &C\oplus W^B \ar@{->>}[r]^-{\svech{c_g}{b}} &C_g.}$
By Theorem \ref{8.11}, We have an exact sequence $H_2(B) \xrightarrow{H_2(g)} H_2(C) \rightarrow 0$, which induces the following exact sequence
$\beta_{12}(B) \xrightarrow{\beta_{12}(\mu)} \beta_{12}(C)\rightarrow 0.$
Now we prove that $(\h_1\cap\K_2)/\W_1$ is a Serre subcategory of $\h_1/\W_1$.\\
Let $0\rightarrow A \xrightarrow{\rho} B \xrightarrow{\mu} C\rightarrow 0$ be a short exact sequence in $\h_1/\W_1$.\\
If $B\in (\h_1\cap\K_2)/\W_1$, since $\beta_{12}$ is exact and $\beta_{12}(B)=0$ by Proposition \ref{8.14}, we have $\beta_{12}(A)=0=\beta_{12}(C)$, which implies that $A,C\in (\h_1\cap\K_2)/\W_1$.\\
If $A,C\in (\h_1\cap\K_2)/\W_1$, since we have the following short exact sequence
$\xymatrix{K_g \ar@{ >->}[r]^-{\svecv{-a}{k_g}} &B\oplus W_C \ar@{->>}[r]^-{\svech{g}{w_C}} &C}$
in $\B$ such that $K_g\simeq A$ in $\h_1/\W_1$, we get that $B\in\add((\U_2*\V_2)*(\U_2*\V_2))\subseteq\K_2$. Hence $B\in (\h_1\cap\K_2)/\W_1$.
\end{proof}

We prove the following proposition, and we recall that a similar property has been proved for triangulated case in \cite[Lemma 6.3]{ZZ}.

\begin{prop}\label{61}
Let $(\U_1,\V_1)$ and $(\U_2,\V_2)$ be cotorsion pairs on $\B$. If $\W_1\subseteq \K_2\subseteq\K_1$, then we have a natural isomorphism $\beta_{21}\beta_{12}\simeq \id_{\h_1/\W_1}$ of functors.
\end{prop}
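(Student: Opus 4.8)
The plan is to show that for each object $B \in \h_1$, applying $\beta_{12}$ and then $\beta_{21}$ returns an object canonically isomorphic to $B$ in $\h_1/\W_1$, and that these isomorphisms are natural. Recall that $\beta_{12}$ is the restriction of $H_2 = \sigma_2^- \circ \sigma_2^+ \circ \pi_2$ to $\h_1$, and similarly $\beta_{21}$ is the restriction of $H_1$ to $\h_2$ — both functors are well-defined because the hypothesis $\W_1 \subseteq \add(\U_2*\V_2)$ gives $H_2(\W_1) = 0$ (by Proposition \ref{8.14}), and $\add(\U_2*\V_2) \subseteq \add(\U_1*\V_1)$ gives $H_1(\U_2) = H_1(\V_2) = 0$ (by Lemma \ref{equi}), so $H_1$ descends through $\B/\W_2$.

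First I would take $B \in \h_1$ and analyze $H_2(B)$ by building the defining diagrams. Using Definition \ref{re} for the cotorsion pair $(\U_2, \V_2)$, there is a reflection sequence $B \rightarrowtail B^{+_2} \twoheadrightarrow U^0$ with $U^0 \in \U_2$ and $W^0 \in \W_2$; then applying the dual construction (Definition \ref{10}) to $B^{+_2}$ gives $H_2(B) \simeq (B^{+_2})^{-_2}$, which sits in a coreflection sequence $V_0 \rightarrowtail (B^{+_2})^{-_2} \twoheadrightarrow B^{+_2}$ with $V_0 \in \V_2$. The key point is that these short exact sequences exhibit $H_2(B)$ as related to $B$ by morphisms whose "error terms" lie in $\U_2 \cup \V_2 \subseteq \add(\U_1*\V_1)$ — hence in the kernel of $H_1$. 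Concretely, since $B \rightarrowtail B^{+_2} \twoheadrightarrow U^0$ is a reflection sequence, Proposition \ref{8.7}(d) applied to the cotorsion pair $(\U_1,\V_1)$ — which is legitimate because this is a reflection sequence for $(\U_1, \V_1)$ too once we know $U^0 \in \U_2 \subseteq \add(\U_1*\V_1)$, or more carefully because $H_1$ kills $U^0$ and $H_1$ is half exact — shows $H_1$ sends the inclusion $B \hookrightarrow B^{+_2}$ to an isomorphism; dually $H_1$ sends the surjection $(B^{+_2})^{-_2} \twoheadrightarrow B^{+_2}$ to an isomorphism. Composing, $H_1(H_2(B)) \simeq H_1(B)$, and since $B \in \h_1$, Proposition \ref{8.7}(b) gives $H_1(B) = \pi_1(B) = B$ in $\h_1/\W_1$. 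So $\beta_{21}\beta_{12}(B) \simeq B$ objectwise.

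The main obstacle — and the part requiring genuine care — is naturality: I must check that the isomorphisms $\beta_{21}\beta_{12}(B) \xrightarrow{\simeq} B$ assemble into a natural transformation, i.e. commute with $\beta_{21}\beta_{12}(\underline{f})$ for every morphism $\underline f : B \to C$ in $\h_1/\W_1$. The clean way to do this is to produce the isomorphism not as an ad hoc composite but as a composite of genuine natural transformations. Observe that the unit/counit of the adjunctions $(\sigma^+, i^+)$ and $(i^-, \sigma^-)$ for each cotorsion pair give natural transformations $\pi \Rightarrow i^+\sigma^+\pi$ and $i^-\sigma^-\pi \Rightarrow \pi$; the content above is that, after applying $H_1$, both of these (for the index-$2$ pair) become natural isomorphisms when the source/target lies in $\h_1$, because their cones in each short exact sequence are objects of $\U_2$ or $\V_2$, all killed by $H_1$, and $H_1$ is half exact. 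Thus $\beta_{21}\beta_{12} = H_1 \circ (\sigma_2^-\sigma_2^+\pi_2)|_{\h_1}$ is connected to $H_1|_{\h_1} = \pi_1|_{\h_1} = \id_{\h_1/\W_1}$ by a zig-zag of natural transformations each of which is pointwise invertible, hence is a natural isomorphism. I would spell out the two relevant commutative squares (the defining square of $\alpha_B$ and of $\gamma_B$ for the index-$2$ pair, pushed through $H_1$) and invoke that $H_1$ of a reflection or coreflection sequence for a $(\U_1,\V_1)$-object is an isomorphism, concluding $\beta_{21}\beta_{12} \simeq \id_{\h_1/\W_1}$.
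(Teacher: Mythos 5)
Your overall strategy — produce objectwise isomorphisms $\beta_{21}\beta_{12}(B)\simeq B$ by pushing the unit $\alpha_B^{(2)}:B\rightarrowtail B^{+_2}$ and counit $\gamma_{B^{+_2}}^{(2)}:(B^{+_2})^{-_2}\twoheadrightarrow B^{+_2}$ through $H_1$, then argue naturality from the naturality of these (co)units — is the same as the paper's. But the objectwise step has a genuine gap, and it is precisely the step you label as routine.

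You assert that $H_1$ sends $s_B:B\rightarrowtail B^{+_2}$ to an isomorphism, offering two justifications, neither of which works. First, $B\rightarrowtail B^{+_2}\twoheadrightarrow U^0$ is \emph{not} a reflection sequence for $(\U_1,\V_1)$: Definition~\ref{8.4} requires the cokernel to lie in $\U_1$ (and the middle term to lie in $\B^+_1$, with a specified lifting property through $\U_1$), whereas here we only know $U^0\in\U_2\subseteq\add(\U_1*\V_1)$. So Proposition~\ref{8.7}(d) for $(\U_1,\V_1)$ does not apply. Second, ``$H_1$ kills $U^0$ and $H_1$ is half exact'' applied to $B\rightarrowtail B^{+_2}\twoheadrightarrow U^0$ gives exactness of $H_1(B)\to H_1(B^{+_2})\to 0$, i.e.\ \emph{surjectivity} of $H_1(s_B)$, but says nothing about injectivity. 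Half exactness on an exact category gives exactness only at the middle term; it does not produce the connecting-map vanishing you would get in the triangulated setting. Even appealing to the long exact sequence of Corollary~\ref{8.15} does not help directly, since there is no reason for $H_1(\Omega U^0)$ to vanish ($\Omega U^0$ need not lie in $\add(\U_1*\V_1)$).

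The missing ingredient is the rest of diagram~(2.1) for the index-$2$ cotorsion pair. The upper-right square of that diagram is both a push-out and a pull-back (Proposition~\ref{PO}), hence yields a short exact sequence
$U_B\rightarrowtail B\oplus W^0\twoheadrightarrow B^{+_2}$
with $U_B\in\U_2$ and $W^0\in\W_2$, both killed by $H_1$ under your hypothesis $\add(\U_2*\V_2)\subseteq\add(\U_1*\V_1)$. Applying the half-exact $H_1$ to \emph{this} sequence gives $0=H_1(U_B)\to H_1(B)\oplus H_1(W^0)\to H_1(B^{+_2})$ exact, and since $H_1(W^0)=0$ this is exactly injectivity of $H_1(s_B)$. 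Combined with the surjectivity you already have, $H_1(s_B)$ is an isomorphism. The dual argument using the push-out square in diagram~(2.2) gives that $H_1(t_B)$ is an isomorphism. Once this is fixed, your naturality argument via the (co)unit natural transformations goes through as written (keeping in mind that the squares $s_C f=f^+s_B$ only commute modulo $\W_2$, which is harmless because $H_1(\W_2)=0$).
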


\begin{proof} 
Let $B\in \h_1$. By Definition \ref{re} and \ref{10}, we get the following commutative diagrams
$$\xymatrix@C=0.6cm@R0.6cm{
V_B \ar@{=}[d] \ar@{ >->}[r] &U_B \ar@{ >->}[d] \ar@{->>}[r] &B \ar@{ >->}[d]^{s_B}\\
V_B \ar@{ >->}[r]  &W^0 \ar@{->>}[r] \ar@{->>}[d] &B^+_2 \ar@{->>}[d]\\
&U^0 \ar@{=}[r] &U^0,\\
} \quad
\xymatrix@C=0.6cm@R0.6cm{
V_0 \ar@{ >->}[d] \ar@{=}[r] &{V_0} \ar@{ >->}[d]\\
(B^+_2)^-_2 \ar@{->>}[d]_{t_B} \ar@{ >->}[r] &{W_0} \ar@{->>}[d] \ar@{->>}[r] &{U^{B_2}} \ar@{=}[d]\\
B^+_2 \ar@{ >->}[r] &{V^{B_2}} \ar@{->>}[r] &{U^{B_2}}}
$$
where $U_B,U^0,U^{B_2}\in \U_2$, $V_B,V^{B_2},V_0\in \V_2$, $W_0,W^0\in \W_2$ and $(B^+_2)^-_2=H_2(B)$ in $\B/\W_2$. By Lemma \ref{equi}, we get $H_1(\U_2)=H_1(\V_2)=0$, by Lemma \ref{8.5} and Theorem \ref{8.11}, we get two isomorphisms $B\xrightarrow{H_1(s_B)} H_1(B^+_2)$ and $H_1((B^+_2)^-_2)\xrightarrow{H_1(t_B)} H_1(B^+_2)$ in $\h_1/\W_1$. Since $H_1((B^+_2)^-_2)=\beta_{21}\beta_{12}(B)$, we get a isomorphism $\rho_B:=H_1(t_B)^{-1}H_1(s_B):B\rightarrow \beta_{21}\beta_{12}(B)$ on $\h_1/\W_1$. Let $f:B\rightarrow C$ be a morphism in $\h_1$, we also denote it image in $\h_1/\W_1$ by $f$. By the definition of $H_2$, we get the following commutative diagrams in $\B$
$$\xymatrix{
B \ar[r]^{s_B} \ar[d]_f &B^+_2 \ar[d]^{f^+}\\
C \ar[r]_{s_C} &C^+_2,\\
} \quad
\xymatrix{
(B^+_2)^-_2 \ar[r]^{t_B} \ar[d]_{(f^+)^-} &B^+_2 \ar[d]^{f^+}\\
(C^+_2)^-_2 \ar[r]_{t_C} &C^+_2}
$$
where $\pi_2((f^+)^-)=H_2(f)$. Hence we obtain the following commutative diagram in $\h_1/\W_1$
$$\xymatrix{
B \ar[r]^-{\rho_B} \ar[d]_f &\beta_{21}\beta_{12}(B) \ar[d]^{\beta_{21}\beta_{12}(f)}\\
C \ar[r]_-{\rho_C} &\beta_{21}\beta_{12}(C)
}$$
which implies that $\beta_{21}\beta_{12}\simeq \id_{\h_1/\W_1}$.
\end{proof}

According to Proposition \ref{61}, we obtain the following corollary immediately.

\begin{cor}\label{suf}
If $\K_1=\K_2$, then we have an equivalence $\h_1/\W_1\simeq \h_2/\W_2$ between two hearts.
\end{cor}

For a cotorsion pair $(\U,\V)$, let $\mathcal C=\U\cap{^{\bot_1}}\U$, by defintion $\mathcal C$ is rigid. When $\U$ itself is rigid, we get $\mathcal C=\U$ and $\K=\V$. If $(\mathcal C,\K)$ is a cotorsion pair, denote its heart by $\underline \h_{\mathcal C}$, we have $\underline \h\simeq \underline \h_{\mathcal C}$ and denote $\beta:\underline \h\rightarrow \underline \h_{\mathcal C}$ by the equivalence. We have the following proposition.

\begin{prop}
Let $\mathcal C=\U\cap{^{\bot_1}}\U$, if $(\mathcal C,\K)$ is a cotorsion pair, let $H_{\mathcal C}$ be the associated half exact functor, then for a morphism $f:A\rightarrow B$ in $\B$, $H(f)=0$ if and only if $f$ factors through $\K$.
\end{prop}

\begin{proof}
By Proposition \ref{nameless} and \ref{eqfunctor} , $H_{\mathcal C}\simeq \beta H$. Thus $H(f)=0$ if and only if $H_{\mathcal C}(f)=0$. By Proposition \ref{8.3}, $H_{\mathcal C}(f)=0$ if and only if $f$ factors through $\K$.
\end{proof}

Let $\overline \h_2$ be localization of $\h_2/\W_2$ with respect to $(\h_2\cap\K_1)/\W_2$, then $\overline \h_2$ is abelian. Since $\beta_{21}$ is exact and $\Ker (\beta_{21})=(\h_2\cap\K_1)/\W_2$, we get the following commutative diagram
$$\xymatrix@C=0.6cm@R0.6cm{
\h_2/\W_2 \ar[dr]_{L} \ar[rr]^{\beta_{21}} &&\h_1/\W_1\\
&\overline \h_2 \ar@{.>}[ur]_-{\overline {\beta_{21}}}
}$$
where $L$ is the localization functor which is exact and $\overline {\beta_{21}}$ is a faithful exact functor. Since $\overline {\beta_{21}}L\beta_{12}\simeq \id_{\h_1/\W_1}$, we get that $L\beta_{12}$ is fully-faithful. Now we prove that $L\beta_{12}$ is dense under the assumption of Proposition \ref{61}.\\

The following lemma is needed.

\begin{lem}\label{zero}
Let $(\U,\V)$ be a cotorsion pair. Then for any subcategory $\mathcal C\supseteq \K$ which is closed under extension, we have $H(\mathcal C)\subseteq (\h\cap \mathcal C)/\W$.
\end{lem}

\begin{proof}
Let $B\in \mathcal C$, we show that $H(B)\in (\h\cap \mathcal C)/\W$.
By Definition \ref{re} and \ref{10}, we get the following commutative diagrams
$$\xymatrix@C=0.6cm@R0.6cm{
V_B \ar@{=}[d] \ar@{ >->}[r] &U_B \ar@{ >->}[d] \ar@{->>}[r] &B \ar@{ >->}[d]\\
V_B \ar@{ >->}[r]  &W^0 \ar@{->>}[r] \ar@{->>}[d] &B^+ \ar@{->>}[d]\\
&U^0 \ar@{=}[r] &U^0,\\
} \quad
\xymatrix@C=0.6cm@R0.6cm{
V_0 \ar@{ >->}[d] \ar@{=}[r] &{V_0} \ar@{ >->}[d]\\
(B^+)^- \ar@{->>}[d] \ar@{ >->}[r] &{W_0} \ar@{->>}[d] \ar@{->>}[r] &{U^B} \ar@{=}[d]\\
B^+ \ar@{ >->}[r] &{V^B} \ar@{->>}[r] &{U^B}}
$$
where $U_B,U^0,U^B\in \U$, $V_B,V^B,V_0\in \V$, $W_0,W^0\in \W$ and $(B^+)^-=H(B)$ in $\uB$. From the left diagram we get $B^+\in \mathcal C*\U=\mathcal C$, since $\mathcal C\supseteq \K\supseteq \U$ and $\mathcal C$ is extension closed. Then from the right diagram we obtain $(B^+)^-\in \V*\mathcal C=\mathcal C$. Hence $H(B)\in (\h\cap \mathcal C)/\W$.
\end{proof}

Let $B\in \h_2$, by Definition \ref{re} and \ref{10}, we get the following commutative diagrams
$$\xymatrix@C=0.6cm@R0.6cm{
V_B \ar@{=}[d] \ar@{ >->}[r] &U_B \ar@{ >->}[d] \ar@{->>}[r] &B \ar@{ >->}[d]^{s_B}\\
V_B \ar@{ >->}[r]  &W^0 \ar@{->>}[r] \ar@{->>}[d] &B^+_1 \ar@{->>}[d]\\
&U^0 \ar@{=}[r] &U^0,\\
} \quad
\xymatrix@C=0.6cm@R0.6cm{
V_0 \ar@{ >->}[d] \ar@{=}[r] &{V_0} \ar@{ >->}[d]\\
(B^+_1)^-_1 \ar@{->>}[d]_{t_B} \ar@{ >->}[r] &{W_0} \ar@{->>}[d] \ar@{->>}[r] &{U^{B_1}} \ar@{=}[d]\\
B^+_1 \ar@{ >->}[r] &{V^{B_1}} \ar@{->>}[r] &{U^{B_1}}}
$$
where $U_B,U^0,U^{B_1}\in \U_1$, $V_B,V^{B_1},V_0\in \V_1$, $W_0,W^0\in \W_1$ and $(B^+_1)^-_1=H_1(B)$ in $\B/\W_1$. Since $H_2(\W_1)=0$, we get the following exact sequences by Theorem \ref{8.11}
$$H_2(U_B)\rightarrow B \rightarrow H_2(B^+_1) \rightarrow H_2(U^0),$$
$$H_2(V_0)\rightarrow H_2((B^+_1)^-_1) \rightarrow H_2(B^+_1) \rightarrow H_2(V^{B_1}).$$
By Lemma \ref{zero} $H_2(\U_1),H_2(\V_1)\subseteq (\h_2\cap\K_1)/\W_2$, apply $L$ to the above two exact sequences, since $L$ is exact, we get $B \simeq H_2(B^+_1) \simeq H_2((B^+_1)^-_1)=L\beta_{12}\beta_{21}(B)$ in $\overline \h_2$, which implies that $L\beta_{12}$ is dense.

Now we get the following theorem.

\begin{thm}\label{serre}
Let $(\U_1,\V_1)$, $(\U_2,\V_2)$ be cotorsion pairs on $\B$. If $\W_1\subseteq \K_2\subseteq\K_1$, then we have an equivalence $L\beta_{12}: \h_1/\W_1 \rightarrow \overline \h_2 $.
\end{thm}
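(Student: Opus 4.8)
The plan is to show that $L\beta_{12}$ is an equivalence by verifying that it is fully faithful and dense, exactly as foreshadowed in the paragraph preceding the statement. Most of the work has already been assembled: we know $\beta_{12}$ and $\beta_{21}$ are exact functors (Proposition \ref{nameless}, using both inclusions $\W_1\subseteq\add(\U_2*\V_2)\subseteq\add(\U_1*\V_1)$), that $\beta_{21}\beta_{12}\simeq\id_{\h_1/\W_1}$ (Proposition \ref{61}), and that $\beta_{21}$ kills precisely $(\h_2\cap\add(\U_1*\V_1))/\W_2$, hence factors as $\overline{\beta_{21}}\circ L$ through the localization $\overline\h_2$ with $\overline{\beta_{21}}$ faithful and exact. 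From $\overline{\beta_{21}}\circ L\beta_{12}\simeq\id_{\h_1/\W_1}$ one reads off that $L\beta_{12}$ is faithful and, since $\overline{\beta_{21}}$ is faithful and the composite is full, that $L\beta_{12}$ is full as well. So the only genuinely new content is density.

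For density I would take an arbitrary $B\in\h_2$ and run the ``second cotorsion pair'' constructions of Definitions \ref{re} and \ref{10} using $(\U_1,\V_1)$ (not $(\U_2,\V_2)$) to produce $B^+_1$, then $(B^+_1)^-_1$, which by definition represents $H_1(B)=\beta_{12}(B)$ in $\h_1/\W_1$. The two defining short exact sequences
$$V_B\rightarrowtail U_B\twoheadrightarrow B,\qquad U_B\rightarrowtail W^0\twoheadrightarrow U^0$$
splice (via Proposition \ref{7}) into a diagram realizing $B\rightarrowtail B^+_1\twoheadrightarrow U^0$ with $U_B,U^0\in\U_1$, $W^0\in\W_1$; similarly for the coreflection step producing $(B^+_1)^-_1\twoheadrightarrow B^+_1$ with $V_0,V^{B_1}\in\V_1$, $W_0\in\W_1$. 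Applying the half exact functor $H_2$ to the two induced short exact sequences (using Proposition \ref{PO} to rewrite each three-term diagram as a genuine short exact sequence) and invoking Theorem \ref{8.11} gives the four-term exact sequences
$$H_2(U_B)\to H_2(B)\to H_2(B^+_1)\to H_2(U^0),\qquad H_2(V_0)\to H_2((B^+_1)^-_1)\to H_2(B^+_1)\to H_2(V^{B_1}).$$
The key point is that the outer terms $H_2(U_B),H_2(U^0),H_2(V_0),H_2(V^{B_1})$ all lie in $(\h_2\cap\add(\U_1*\V_1))/\W_2$: indeed $\U_1,\V_1\subseteq\add(\U_1*\V_1)$ trivially, and one checks directly that $H_2$ sends any object of $\add(\U_1*\V_1)$ into $\h_2\cap\add(\U_1*\V_1)$ modulo $\W_2$ — the ``$\h_2$'' part because $H_2$ lands in the heart by construction, and the ``$\add(\U_1*\V_1)$'' part because $H_2(X)=(X^+_2)^-_2$ is built from $X$ by short exact sequences with the other terms in $\W_2\subseteq\add(\U_2*\V_2)\subseteq\add(\U_1*\V_1)$, which is closed under the relevant extensions (the $\add(\U_1*\U_1*\V_1*\V_1)=\add(\U_1*\V_1)$ computation from the proof of Lemma \ref{equi}). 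Since $L$ is exact and annihilates $(\h_2\cap\add(\U_1*\V_1))/\W_2$ by its very definition as the localization, applying $L$ to these two four-term exact sequences collapses the outer terms to $0$, giving isomorphisms $L(B)\simeq L(H_2(B^+_1))$ and $L(H_2((B^+_1)^-_1))\simeq L(H_2(B^+_1))$ in $\overline\h_2$. Composing, $L(B)\simeq L(H_2((B^+_1)^-_1))=L\beta_{12}(\beta_{21}(B))$, so every object of $\overline\h_2$ is isomorphic to one in the image of $L\beta_{12}$, i.e. $L\beta_{12}$ is dense.

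A fully faithful, dense functor is an equivalence, so $L\beta_{12}:\h_1/\W_1\to\overline\h_2$ is an equivalence, which is the assertion. The main obstacle I anticipate is the bookkeeping in the density argument: one must be careful that the object labelled $B^+_1$ really is the reflection of $B$ with respect to the \emph{first} cotorsion pair and that $H_1(B)\cong(B^+_1)^-_1$ in $\h_1/\W_1$ (via Proposition \ref{8.6} and the identifications in Lemma \ref{8.5}), and above all that the four ``outer'' objects genuinely land in the Serre subcategory $(\h_2\cap\add(\U_1*\V_1))/\W_2$ — this is where the hypothesis $\add(\U_2*\V_2)\subseteq\add(\U_1*\V_1)$ is used, to guarantee that passing through the $(\U_2,\V_2)$-reflection/coreflection (which introduces objects of $\W_2$) does not leave $\add(\U_1*\V_1)$. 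Everything else is a routine assembly of results already in hand.
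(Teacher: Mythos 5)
Your proposal is correct and follows essentially the same route as the paper: full faithfulness from $\overline{\beta_{21}}L\beta_{12}\simeq\id$ with $\overline{\beta_{21}}$ faithful, and density by applying $H_2$ to the $(\U_1,\V_1)$-reflection and coreflection sequences of $B\in\h_2$ and killing the outer terms $H_2(\U_1),H_2(\V_1)$ in the localization. Two small slips that don't affect the argument: under the stated hypothesis only $\beta_{21}$ is known to be exact ($\beta_{12}$ is merely half exact, but you never actually use its exactness), and the "other terms" appearing in the construction of $H_2(X)=(X^+_2)^-_2$ lie in $\U_2$ and $\V_2$, not in $\W_2$ — though this still suffices, since $\U_2,\V_2\subseteq\add(\U_2*\V_2)\subseteq\add(\U_1*\V_1)$ and $\add(\U_1*\V_1)$ is closed under the $*$-operation.
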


In the rest of this section, we discuss about the relationship between the heart of a twin cotorsion pair and the hearts of its two components.

First we recall the definition of the twin cotorsion pair. A pair of cotorsion pairs $(\U_1,\V_1)$, $(\U_2,\V_2)$ is called a twin cotorsion pair if $\U_1\subseteq \U_2$. This condition is equivalent to $\V_2\subseteq \V_1$ and also equivalent to $\Ext^1_\B(\U_1,\V_2)=0$. We introduce some notations.

Let $\W_t:=\V_1\cap \U_2$.
\begin{itemize}
\item[(a)] $\B^+_t$ is defined to be the full subcategory of $\B$, consisting of objects $B$ which admits a short exact sequence
$V_B\rightarrowtail U_B\twoheadrightarrow B$
where $U_B\in \W_t$ and $V_B\in \V_2$.

\item[(b)] $\B^-_t$ is defined to be the full subcategory of $\B$, consisting of objects $B$ which admits a short exact sequence
$B\rightarrowtail V^B\twoheadrightarrow U^B$
where $V^B\in \W_t$ and $U^B\in \U_1$.
\end{itemize}
Denote $\h_t:=\B^+_t\cap\B^-_t$, $\h_t/\W_t$ is called the \emph{heart} of $(\U_1,\V_1),(\U_2,\V_2)$.

\begin{prop}\label{63}
Let $(\U_1,\V_1),(\U_2,\V_2)$ be a twin cotorsion pair on $\B$ and $f:A\rightarrow B$ be a morphism in $\h_t$, then $H_k(f)=0$ $(k=1 \text{ or } 2)$ if and only if $f$ factors through $\W_t$.
\end{prop}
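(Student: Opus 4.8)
The plan is to prove the two implications of Proposition \ref{63} separately, and, as usual in this paper, to treat only the case $k=2$ in detail since the case $k=1$ is dual. The key point is that for objects in $\h_t$ the half exact functor $H_2$ can be computed very explicitly through the subcategories $\B^+_t$ and $\B^-_t$, and that factoring through $\W_t = \V_1\cap\U_2$ is precisely factoring through an object that both $\sigma^+_2$ and $\sigma^-_2$ kill.

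For the ``if'' direction, suppose $f$ factors through an object $W\in\W_t$. Since $\W_t\subseteq\U_2$, any morphism factoring through $\W_t$ in particular factors through $\U_2$, and by Proposition \ref{8.2}(c) we have $\sigma^+_2(\underline f)=0$, hence $H_2(f)=\sigma^-_2\sigma^+_2(\underline f)=0$. (Alternatively one uses $\W_t\subseteq\V_2$ together with Proposition \ref{8.3}(c) applied after Proposition \ref{8.6}, but the first route is shorter.)

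For the ``only if'' direction, assume $H_2(f)=0$. Because $A\in\B^+_t\subseteq\B^+_2$ and $B\in\B^-_t\subseteq\B^-_2$, I would first replace $f$ by a morphism between ``nicer'' objects: using the reflection/coreflection sequences (Definition \ref{re}, Definition \ref{10}, Lemma \ref{8.5}) for $A$ and $B$ with respect to $(\U_2,\V_2)$, and the fact that $\alpha_A,\gamma_B$ are approximations (Propositions \ref{8}, \ref{81}), one reduces to understanding when the induced map $\sigma^-_2\sigma^+_2(\underline f)=0$ forces $f$ to factor through $\W_t$. The condition $H_2(f)=0$ means the unique factorization of $\underline{\alpha_A\gamma_B}$-type morphism through $B^+_2$ and then through $(B^+_2)^-_2$ vanishes in $\uB_2$, i.e. factors through $\W_2$. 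The extra input one gets from $A,B\in\h_t$ is that the defining short exact sequences $V_A\rightarrowtail U_A\twoheadrightarrow A$ with $U_A\in\W_t$, $V_A\in\V_2$ and $B\rightarrowtail V^B\twoheadrightarrow U^B$ with $V^B\in\W_t$, $U^B\in\U_1$ let one control the factorization: chasing $f$ through $U_A\in\W_t$ on the source side (possible since $U_A\twoheadrightarrow A$ is a right $\W_t$-approximation-type map because $\Ext^1_\B(\W_t,\V_2)=0$) produces a morphism $U_A\to B$, and then the vanishing of $H_2(f)$ together with $V^B\in\W_t$, $U^B\in\U_1$ and $\Ext^1_\B(\U_1,\V_1)=0$ (note $\W_t\subseteq\V_1$) forces this morphism, hence $f$, to factor through $V^B\in\W_t$.

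The main obstacle I expect is the bookkeeping in this last reduction: one must carefully combine the two ``sides'' ($A\in\B^+_t$ giving a cover by $\W_t$, $B\in\B^-_t$ giving a hull by $\W_t$) with the precise meaning of $H_2(f)=0$, which is phrased via the composite functor $\sigma^-_2\sigma^+_2$ and the natural isomorphism $\eta$ of Proposition \ref{8.6}, so that the factorization-through-$\W_2$ coming from $H_2(f)=0$ gets upgraded to a factorization through $\W_t=\V_1\cap\U_2$. Concretely, the delicate step is showing that the $\W_2$-object through which things factor can be arranged to lie in $\V_1$ as well, which is where the twin condition $\U_1\subseteq\U_2$ (equivalently $\V_2\subseteq\V_1$, equivalently $\Ext^1_\B(\U_1,\V_2)=0$) and the specific shapes of the $\B^\pm_t$-sequences are essential. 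Everything else is a diagram chase using Propositions \ref{PO}, \ref{7}, \ref{8}, \ref{81} and the approximation properties already established.
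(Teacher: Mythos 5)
Your ``if'' direction is correct and matches the paper exactly: since $\W_t\subseteq\U_2$, a morphism factoring through $\W_t$ factors through $\U_2$, so $\sigma^+_2$ kills it by Proposition~\ref{8.2}(c) and hence $H_2(f)=0$.

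Your ``only if'' direction is a plan rather than a proof, and the specific route you sketch does not line up with an argument that closes. You correctly flag the delicate step as upgrading the $\W_2$-factorization coming from $H_2(f)=0$ into a $\W_t$-factorization, and you correctly identify the twin condition $\Ext^1_\B(\U_1,\V_2)=0$ as the key input, but the chase you outline --- pushing $f$ through $U_A$ on the source side and then trying to force factorization through $V^B$ using $\Ext^1_\B(\U_1,\V_1)=0$ --- is not what is needed: $\Ext^1_\B(\U_1,\V_1)=0$ is just the cotorsion-pair axiom and gives nothing new, and it is unclear how the composite $U_A\to B$ together with $H_2(f)=0$ forces a factorization of $f$ through $V^B$. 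The paper's argument instead works at the level of the reflection $f^+:A^+\to B^+$ for $(\U_2,\V_2)$. From $H_2(f)=\sigma^-_2\sigma^+_2(\underline f)=0$ and Proposition~\ref{8.3}(c), $f^+$ factors through some $V\in\V_2$. Because $A\in\h_t$, one has $A^+\in\B^-_t$ by [L, Lemma~2.10], so $A^+$ admits a conflation $A^+\rightarrowtail W^A\twoheadrightarrow U^A$ with $W^A\in\W_t$ and $U^A\in\U_1$. This is precisely where the twin condition enters: $\Ext^1_\B(U^A,V)=0$ lets you extend $A^+\to V$ over $W^A$, so $f^+$ factors through $W^A\in\W_t\subseteq\W_2$. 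Finally the same pull-back argument as in the proof of Proposition~\ref{8.2}(c) (using that $B\in\B^+_t$, so the $\U_2$-cover $U_B\twoheadrightarrow B$ has $U_B\in\W_t$) shows that $f$ factors through $U_B\in\W_t$, not through $V^B$ as you suggest. To make your write-up a proof you need to replace the vague reduction by this explicit chain: pass to $f^+$, invoke $A^+\in\B^-_t$, apply $\Ext^1_\B(\U_1,\V_2)=0$ there, and then run the pull-back argument to land in $U_B\in\W_t$.
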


\begin{proof}
We only prove the case $k=2$, the other case is by dual.\\
The "if" is followed directly by Proposition \ref{8.2}. Now we prove the "only if" part.\\
Since $H_k(f)=0$, by Proposition \ref{8.3} and \ref{8.6}, we get in the following commutative diagram
$$\xymatrix@C0.6cm@R0.6cm{
A \ar@{ >->}[d]_{\alpha_A} \ar[r]^f &B \ar@{ >->}[d]^{\alpha_B} &U_B \ar@{ >->}[d]^{w'} \ar@{->>}[l]_{u_B} &V_B \ar@{=}[d] \ar@{ >->}[l]\\
A^+ \ar@{->>}[d] \ar[r]_{f^+} &B^+ \ar@{->>}[d] &W^0 \ar@{->>}[d] \ar@{->>}[l]^w &V_B \ar@{ >->}[l]\\
U^0_A \ar[r] &U^0 \ar@{=}[r] &U^0
}$$
which is similar as in Proposition \ref{8.2}, where $U^0_A,U^0\in \U_2$, $V_B\in \V_2$, $U_B\in \W_t$ and $W^0\in \W_2$, $f^+$ factors through an object $V\in \V_2$. Since $A,B\in \h_t$, by \cite[Lemma 2.10]{L}, $A^+,B^+\in \B^-_t$. Hence there exits a diagram
$$\xymatrix@C0.6cm@R0.6cm{
A^+ \ar[dr]^a \ar[dd]_{f^+} \ar@{ >->}[rr]^-{w^A} &&W^A \ar@{->>}[r] &U^A\\
&V \ar[dl]^b\\
B^+ \ar@{ >->}[rr] &&W^B \ar@{->>}[r] &U^B
}\
$$
where $W^A,W^B\in \W_t$ and $U^A,U^B\in \U_1$. Since $\Ext^1_\B(U^A,V)=0$, there exists a morphism $c:W^A\rightarrow V$ such that $f^+=bcw^A$. Now using the same argument as in Proposition \ref{8.2}, we get that $f$ factors through $U_B\in \W_t$.
\end{proof}

Let $\pi_t:\B\rightarrow \B/\W_t$ be the canonical functor and $\iota_t: \h_t/\W_t \hookrightarrow \B/\W_t$ be the inclusion functor.

Let $k \in \{ 1,2 \}$, since $H_k(W_t)=0$ by Proposition \ref{8.7}, there exists a functor $h_k:\B/\W_t\rightarrow \h_k/\W_k$ such that $H_k=h_k\pi_t$.
$$\xymatrix@C0.6cm@R0.6cm{
\B \ar[dr]_{H_k} \ar[rr]^{\pi_t} &&\B/\W_t \ar@{.>}[dl]^{h_k}\\
&\h_k/\W_k
}$$
Hence we get a functor $\beta_k:=h_k\iota_t:\h_t/\W_t\rightarrow \h_k/\W_k$ and the following corollary.

\begin{cor}
Let $(\U_1,\V_1),(\U_2,\V_2)$ be a twin cotorsion pair on $\B$, then $\beta_k:\h_t/\W_t\rightarrow \h_k/\W_k$ $(k \in \{ 1,2 \})$ is faithful.
\end{cor}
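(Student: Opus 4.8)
The plan is to deduce faithfulness of $\beta_k$ immediately from Proposition \ref{63}. Since $\beta_k=h_k\iota_t$ is a composite of additive functors (the functor $h_k$ is induced by the additive functor $H_k$ via $H_k=h_k\pi_t$, and $\iota_t$ is an inclusion functor), it is additive; hence to prove it is faithful it suffices to show that $\beta_k(f)=0$ implies $f=0$ for every morphism $f$ in $\h_t/\W_t$. So I would start from a morphism $f:A\to B$ in $\h_t/\W_t$ and choose a representative $\bar f:A\to B$ in $\B$. Unwinding the definitions, $\beta_k(f)=h_k(\iota_t(f))=h_k(\pi_t(\bar f))=H_k(\bar f)$, that is, $\beta_k(f)$ is precisely the image of $\bar f$ under $H_k$ in $\h_k/\W_k$.

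Now if $\beta_k(f)=0$, then $H_k(\bar f)=0$ with $A,B\in\h_t$, so Proposition \ref{63} (for $k=2$, or for $k=1$ by the dual assertion in its proof) gives that $\bar f$ factors through $\W_t$. Since $\W_t\subseteq\h_t$ — any $W\in\W_t$ lies in $\B^+_t$ via the short exact sequence $0\rightarrowtail W\twoheadrightarrow W$ and in $\B^-_t$ via $W\rightarrowtail W\twoheadrightarrow 0$ — a morphism of $\h_t$ that factors through $\W_t$ becomes zero in the quotient $\h_t/\W_t$. Hence $f=0$, and by additivity $\beta_k$ is faithful.

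I expect no genuine obstacle here: the entire content is carried by Proposition \ref{63}, whose nontrivial direction already says that $H_k$ annihilates no morphism of $\h_t$ that is nonzero modulo $\W_t$. The only points needing a line each are the bookkeeping identity $\beta_k(f)=H_k(\bar f)$, which is forced by the factorizations $H_k=h_k\pi_t$ and $\beta_k=h_k\iota_t$, and the containment $\W_t\subseteq\h_t$, which ensures that "factors through $\W_t$" and "zero in $\h_t/\W_t$" coincide for morphisms between objects of $\h_t$.
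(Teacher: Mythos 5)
Your proof is correct and is exactly the argument the paper leaves implicit (it simply presents the corollary as an immediate consequence of Proposition \ref{63}): the identity $\beta_k(\underline f)=H_k(f)$ reduces faithfulness of the additive functor $\beta_k$ to the implication ``$H_k(f)=0\Rightarrow f$ factors through $\W_t$'' for $f$ a morphism in $\h_t$, which is the content of Proposition \ref{63}. Your side remarks — that $\W_t\subseteq\h_t$ so that ``factors through $\W_t$'' is the same as ``vanishes in $\h_t/\W_t$'', and that additivity lets you test faithfulness on the zero morphism — are both accurate and fill in the bookkeeping that the paper omits.
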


This corollary also implies that if $\h_1/\W_1=0$ or $\h_2/\W_2=0$, $\h_t/\W_t$ is also zero.



Moreover, we have the following proposition.

\begin{prop}
Let $(\U_1,\V_1),(\U_2,\V_2)$ be a twin cotorsion pair on $\B$. If $\h_t/\W_t=0$, then $\h_1\subseteq \U_2$ and $\h_2\subseteq \V_1$.

\end{prop}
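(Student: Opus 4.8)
The plan is to show that if $\h_t/\W_t = 0$, then $H_1$ and $H_2$ vanish on $\h_1$ and $\h_2$ respectively, and to translate this vanishing back into containments via the machinery of the previous sections. More precisely, for the first claim $\h_1 \subseteq \U_2$, I would start from an object $B \in \h_1 = \B_1^+ \cap \B_1^-$ and try to produce an object of $\h_t$ that controls $B$. The natural candidate is to iterate the constructions of Definition \ref{re} and \ref{10} relative to the two cotorsion pairs: take $B_2^+$ (the $(\U_2,\V_2)$-reflection of $B$), then its $(\U_2,\V_2)$-coreflection, which represents $H_2(B)$. The key observation should be that because $(\U_1,\V_1),(\U_2,\V_2)$ is a twin cotorsion pair, the object $H_2(B)$ (suitably chosen) actually lies in $\h_t$: one checks it sits in both $\B_t^+$ and $\B_t^-$ using $\W_t = \V_1 \cap \U_2$, exactly as in the proof of Proposition \ref{63} where $A^+, B^+ \in \B_t^-$ was deduced from \cite[Lemma 2.10]{L}.

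Once I know $H_2(B) \in \h_t$, the hypothesis $\h_t/\W_t = 0$ forces $H_2(B) = 0$ in $\B/\W_2$ (since $H_2(B)$, being in $\h_t$, is zero in $\h_t/\W_t$ and hence zero in the quotient $\B/\W_2$ as well — here one must be careful that $\W_t \supseteq \W_2$ or relate the two quotients appropriately, which again follows from the twin cotorsion pair inclusions $\U_1 \subseteq \U_2$, $\V_2 \subseteq \V_1$). Then by Proposition \ref{8.14} applied to $(\U_2,\V_2)$, $H_2(B) = 0$ gives $B \in \add(\U_2 * \V_2)$. Now I would use that $B \in \h_1 \subseteq \B_1^-$, so $B$ admits a short exact sequence $B \rightarrowtail V^B \twoheadrightarrow U^B$ with $V^B \in \V_1$ and $U^B \in \U_1$; combining this with membership in $\add(\U_2 * \V_2)$ and the relations $\U_1 \subseteq \U_2$, $\V_2 \subseteq \V_1$ should collapse the possibilities and pin $B$ down to $\U_2$. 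Dually, using $B \in \h_2 \subseteq \B_2^+$ together with $H_1(B) \in \h_t$ and $H_1(B) = 0$, hence $B \in \add(\U_1 * \V_1)$, one gets $\h_2 \subseteq \V_1$.

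The main obstacle I anticipate is the first step: rigorously verifying that the representative of $H_2(B)$ one constructs genuinely lies in $\h_t = \B_t^+ \cap \B_t^-$, not just in $\h_2$. This requires carefully tracking which subcategory ($\W_t$ versus $\W_2$, $\V_2$ versus $\V_1$, $\U_1$ versus $\U_2$) each object in the iterated push-out/pull-back diagrams belongs to, and invoking the right lemmas from \cite{L} (analogues of Lemmas 2.10, 2.11) to conclude the relevant $\B_t^\pm$ membership. A secondary subtlety is the bookkeeping between the three quotient categories $\B/\W_1$, $\B/\W_2$, $\B/\W_t$: since $\W_t = \V_1 \cap \U_2$ contains neither $\W_1$ nor $\W_2$ in general, I would phrase the argument entirely in $\B$ (in terms of factoring through the relevant subcategories) and only pass to quotients at the very end, mirroring the style of Proposition \ref{63}. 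The final reduction — from ``$B \in \add(\U_2*\V_2)$ and $B \in \B_1^-$'' to ``$B \in \U_2$'' — should be a short diagram chase using $\Ext^1_\B(\U_1, \V_2) = 0$ and closure of $\U_2$ under extensions and summands, analogous to the argument in Lemma \ref{8.00}.
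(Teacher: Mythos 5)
Your plan diverges from the paper and has two genuine gaps. The first is the claim that $H_2(B)\in\h_t$. Membership in $\B^-_t$ requires a short exact sequence $X\rightarrowtail V\twoheadrightarrow U$ with $V\in\W_t$ and, crucially, $U\in\U_1$; the standard construction of $H_2(B)$ uses only the $(\U_2,\V_2)$-approximations of $B$, so the cokernels one produces lie in $\U_2$, and since $\U_1\subsetneq\U_2$ in general one gets $\h_2\subseteq\B^+_t$ but not $\h_2\subseteq\B^-_t$. The paper sidesteps this by building a \emph{mixed} reflection $B^+$: the top row of its diagram is the $(\U_2,\V_2)$-resolution $V_B\rightarrowtail U_B\twoheadrightarrow B$, but the middle column resolves $U_B$ via $(\U_1,\V_1)$, producing $W^0\in\W_t$ and $U^0\in\U_1$. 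Then $B\rightarrowtail B^+\twoheadrightarrow U^0$ together with $B\in\h_1\subseteq\B^-_1\subseteq\B^-_t$ lets \cite[Lemma 2.10]{L} place $B^+$ in $\B^-_t$, while the middle row $V_B\rightarrowtail W^0\twoheadrightarrow B^+$ places $B^+$ in $\B^+_t$. (Your worry about the quotients is also reversed: one has $\W_2\subseteq\W_t$, so $X\in\W_t$ does not by itself give $X=0$ in $\B/\W_2$; you would additionally need that $X\in\U_2$ together with $X\in\B^+_2$ forces $X\in\W_2$.)

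The second gap is the final reduction, from ``$B\in\add(\U_2*\V_2)$ and $B\in\B^-_1$'' to ``$B\in\U_2$.'' You describe this as a short diagram chase in the style of Lemma~\ref{8.00}, but it is not clear the implication even holds at this level of generality: $\add(\U_2*\V_2)$-membership only records that $B$ is a summand of some extension of a $\V_2$-object by a $\U_2$-object, and I do not see how to combine that with the sequence $B\rightarrowtail V^B\twoheadrightarrow U^B$ (with $V^B\in\W_1$, $U^B\in\U_1$) to force $B\in\U_2$. The paper's argument never passes through $\add(\U_2*\V_2)$; it keeps the concrete inflation $B\rightarrowtail B^+$ in hand. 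Once $\h_t/\W_t=0$ gives $B^+\in\W_t\subseteq\U_2$, the middle row $V_B\rightarrowtail W^0\twoheadrightarrow B^+$ splits because $\Ext^1_\B(B^+,V_B)=0$; since the upper-right square of the diagram is a pull-back, the top row $V_B\rightarrowtail U_B\twoheadrightarrow B$ splits too, exhibiting $B$ as a direct summand of $U_B\in\U_2$. This is what \cite[Lemma 3.4]{L} encodes, and it is exactly the data that is discarded when you pass to the abstract invariant $H_2(B)=0$.
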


\begin{proof}
We only prove that $\h_t/\W_t=0$ implies $\h_1\subseteq \U_2$, the other one is by dual.\\
Let $B\in \h_1$, since $B^-_1\subseteq B^-_t$ by definition, in the following diagram
$$\xymatrix@C0.6cm@R0.6cm{
V_B \ar@{=}[d] \ar@{ >->}[r] &U_B \ar@{ >->}[d] \ar@{->>}[r] &B \ar@{ >->}[d]\\
V_B \ar@{ >->}[r]  &W^0 \ar@{->>}[r] \ar@{->>}[d] &B^+ \ar@{->>}[d]\\
&U^0 \ar@{=}[r] &U^0
}$$
where $U_B\in \U_2$, $V_B\in \V_2$, $U^0\in \U_1$ and $W^0\in \W_t$, we get $B^+\in \h_t$ by \cite[Lemma 2.10]{L}. If $\h_t/\W_t=0$, then $B^+\in \W_t$. By \cite[{Lemma 3.4}]{L}, $B\in \U_2$.
\end{proof}


\section{Examples}

\begin{exm}
Let $\Lambda$ be the $k$-algebra given by the quiver
$$\xymatrix{
1 \ar@/^10pt/[r]^a &2 \ar@/^10pt/[l]^{a^*} \ar@/^10pt/[r]^b &3 \ar@/^10pt/[l]^{b^*}}$$
and bounded by the relations $a^*a=0=bb^*$, $aa^*=b^*b$. The AR-quiver of $\B=\mod\Lambda$ is given by
$$\xymatrix@C=0.4cm@R0.4cm{
&&&{\begin{smallmatrix}
1\\
2\\
3
\end{smallmatrix}} \ar[dr]\\
{\begin{smallmatrix}
1
\end{smallmatrix}} \ar[dr]
&&{\begin{smallmatrix}
2\\
3
\end{smallmatrix}} \ar[ur] \ar[dr]
&&{\begin{smallmatrix}
1\\
2
\end{smallmatrix}} \ar[dr] &&{\begin{smallmatrix}
3
\end{smallmatrix}}\\
{\begin{smallmatrix}
&2&\\
1&&3\\
&2&
\end{smallmatrix}} \ar[r]
&{\begin{smallmatrix}
&2&\\
1&&3
\end{smallmatrix}} \ar[dr] \ar[ur]
&&{\begin{smallmatrix}
2
\end{smallmatrix}} \ar[dr] \ar[ur]
&&{\begin{smallmatrix}
1&&3\\
&2&
\end{smallmatrix}} \ar[dr] \ar[ur] \ar[r] &{\begin{smallmatrix}
&2&\\
1&&3\\
&2&
\end{smallmatrix}}\\
{\begin{smallmatrix}
3
\end{smallmatrix}} \ar[ur]
&&{\begin{smallmatrix}
2\\
1
\end{smallmatrix}} \ar[ur] \ar[dr]
&&{\begin{smallmatrix}
3\\
2
\end{smallmatrix}} \ar[ur] &&{\begin{smallmatrix}
1
\end{smallmatrix}.}\\
&&&{\begin{smallmatrix}
3\\
2\\
1
\end{smallmatrix}} \ar[ur]
}$$
We denote by "$\circ$" in the AR-quiver the indecomposable objects belong to a subcategory and by "$\cdot$" the indecomposable objects do not. \\
Let $\U_1$ and $\V_1$ be the full subcategories of $\mod \Lambda$ given by the following diagram.
$$\xymatrix@C=0.3cm@R0.3cm{
&&&&\circ \ar[dr]\\
&\cdot \ar[dr]
&&\cdot \ar[ur] \ar[dr]
&&\cdot \ar[dr] &&\cdot\\
{\U_1=}&\circ \ar[r]
&\circ \ar[dr] \ar[ur]
&&\cdot \ar[dr] \ar[ur]
&&\cdot \ar[dr] \ar[ur] \ar[r]
&\circ\\
&\cdot \ar[ur]
&&\cdot \ar[ur] \ar[dr]
&&\cdot \ar[ur]
&&\cdot\\
&&&&\circ \ar[ur]\\} \quad \quad
\xymatrix@C=0.3cm@R0.3cm{
&&&&\circ \ar[dr]\\
&\circ \ar[dr]
&&\circ \ar[ur] \ar[dr]
&&\cdot \ar[dr] &&\circ\\
{\V_1=}&\circ \ar[r]
&\circ \ar[dr] \ar[ur]
&&\cdot \ar[dr] \ar[ur]
&&\cdot \ar[dr] \ar[ur] \ar[r]
&\circ\\
&\circ \ar[ur]
&&\circ \ar[ur] \ar[dr]
&&\cdot \ar[ur]
&&\circ\\
&&&&\circ \ar[ur]
}$$
The heart $\h_1/\W_1=\add({\begin{smallmatrix}
2
\end{smallmatrix}})$ and $\h_1\simeq \mod (\U_1/\mathcal P)$ by \cite[{Theorem 3.2}]{DL}. Now let $\U_2$ and $\V_2$ be the full subcategories of $\mod \Lambda$ given by the following diagram.
$$\xymatrix@C=0.3cm@R0.3cm{
&&&&\circ \ar[dr]\\
&\cdot \ar[dr]
&&\circ \ar[ur] \ar[dr]
&&\cdot \ar[dr] &&\cdot\\
{\U_2=}&\circ \ar[r]
&\circ \ar[dr] \ar[ur]
&&\cdot \ar[dr] \ar[ur]
&&\cdot \ar[dr] \ar[ur] \ar[r]
&\circ\\
&\cdot \ar[ur]
&&\cdot \ar[ur] \ar[dr]
&&\cdot \ar[ur]
&&\cdot\\
&&&&\circ \ar[ur]\\} \quad \quad
\xymatrix@C=0.3cm@R0.3cm{
&&&&\circ \ar[dr]\\
&\cdot \ar[dr]
&&\circ \ar[ur] \ar[dr]
&&\cdot \ar[dr] &&\circ\\
{\V_2=}&\circ \ar[r]
&\circ \ar[dr] \ar[ur]
&&\cdot \ar[dr] \ar[ur]
&&\cdot \ar[dr] \ar[ur] \ar[r]
&\circ\\
&\circ \ar[ur]
&&\circ \ar[ur] \ar[dr]
&&\cdot \ar[ur]
&&\cdot\\
&&&&\circ \ar[ur]
}$$
The heart $\h_2/\W_2=\add({\begin{smallmatrix}
1
\end{smallmatrix}},{\begin{smallmatrix}
2
\end{smallmatrix}})$. Since $\W_1=\U_1\subseteq \U_2\subseteq \V_2 \subseteq \V_1$, by Theorem \ref{serre}, $\overline \h_2\simeq \h_1/\W_1$.\\
Moreover, $\V_1/\U_1$ has a triangulated category structure, and $(\U_2/\U_1,\V_2/\U_1)$ is a cotorsion pair on it. The Serre subcategory $(\h_2\cap\K_1)/\W_2=\add({\begin{smallmatrix}
1
\end{smallmatrix}})$ is the heart of $(\U_2/\U_1,\V_2/\U_1)$.
\end{exm}

Recall that a subcategory $\M$ of $\B$ is called rigid if $\Ext^1_\B(\M,\M)=0$, $\M$ is cluster tilting if it satisfies
\begin{itemize}
\item [(a)] $\M$ is contravariantly finite and covariantly finite in $\B$.
\item [(b)] $X\in \M$ if and only if $\Ext^1_{\B}(X,\M)=0$.
\item [(c)] $X\in \M$ if and only if $\Ext^1_{\B}(\M,X)=0$.
\end{itemize}

If $\M$ is a cluster tilting subcategory of $\B$, then $(\M,\M)$ is a cotorsion pair on $\B$ (see \cite[{Proposition 10.5}]{L}).. In this case we have $\h=\B^-=\B^+=\B$, $\sigma^-=\sigma^+=\text{id}$ and $H=\pi$.

\begin{exm}
Let $\Lambda$ be the $k$-algebra given by the quiver
$$\xymatrix@C=0.4cm@R0.4cm{
&&3 \ar[dl]\\
&5 \ar[dl] \ar@{.}[rr] &&2 \ar[dl] \ar[ul]\\
6 \ar@{.}[rr] &&4 \ar[ul] \ar@{.}[rr] &&1 \ar[ul]}$$
with mesh relations. The AR-quiver of $\B:=\mod\Lambda$ is given by
$$\xymatrix@C=0.4cm@R0.4cm{
&&{\begin{smallmatrix}
3&&\\
&5&\\
&&6
\end{smallmatrix}} \ar[dr] &&&&&&{\begin{smallmatrix}
1&&\\
&2&\\
&&3
\end{smallmatrix}} \ar[dr]\\
&{\begin{smallmatrix}
5&&\\
&6&
\end{smallmatrix}} \ar[ur] \ar@{.}[rr] \ar[dr] &&{\begin{smallmatrix}
3&&\\
&5&
\end{smallmatrix}} \ar@{.}[rr] \ar[dr] &&{\begin{smallmatrix}
4
\end{smallmatrix}} \ar@{.}[rr] \ar[dr] &&{\begin{smallmatrix}
2&&\\
&3&
\end{smallmatrix}} \ar[ur] \ar@{.}[rr] \ar[dr] &&{\begin{smallmatrix}
1&&\\
&2&
\end{smallmatrix}} \ar[dr]\\
{\begin{smallmatrix}
6
\end{smallmatrix}} \ar[ur] \ar@{.}[rr] &&{\begin{smallmatrix}
5
\end{smallmatrix}} \ar[ur] \ar@{.}[rr] \ar[dr] &&{\begin{smallmatrix}
3&&4\\
&5&
\end{smallmatrix}} \ar[ur] \ar[r] \ar[dr] \ar@{.}@/^15pt/[rr] &{\begin{smallmatrix}
&2&\\
3&&4\\
&5&
\end{smallmatrix}} \ar[r] &{\begin{smallmatrix}
&2&\\
3&&4
\end{smallmatrix}} \ar[ur] \ar@{.}[rr] \ar[dr] &&{\begin{smallmatrix}
2
\end{smallmatrix}} \ar[ur] \ar@{.}[rr] &&{\begin{smallmatrix}
1
\end{smallmatrix}}.\\
&&&{\begin{smallmatrix}
4&&\\
&5&
\end{smallmatrix}} \ar[ur] \ar@{.}[rr] &&{\begin{smallmatrix}
3
\end{smallmatrix}} \ar[ur] \ar@{.}[rr] &&{\begin{smallmatrix}
2&&\\
&4&
\end{smallmatrix}} \ar[ur]
}$$
Let $\U_1$ and $\V_1$ be the full subcategories of $\mod \Lambda$ given by the following diagram.
$$\xymatrix@C=0.3cm@R0.3cm{
&&&\circ \ar[dr] &&&&&&\circ \ar[dr]\\
{\U_1=} &&\circ \ar[ur]  \ar[dr] &&\cdot  \ar[dr] &&\cdot  \ar[dr] &&\cdot  \ar[ur]  \ar[dr] &&\circ \ar[dr]\\
&\circ \ar[ur]  &&\cdot \ar[ur]  \ar[dr] &&\cdot \ar[ur] \ar[r] \ar[dr] &\circ \ar[r] &\cdot \ar[ur] \ar[dr] &&\cdot \ar[ur] &&\circ\\
&&&&\circ \ar[ur] &&\cdot \ar[ur] &&\circ \ar[ur]
\\} \quad
\xymatrix@C=0.3cm@R0.3cm{
&&&\circ \ar[dr] &&&&&&\circ \ar[dr]\\
{\V_1=} &&\circ \ar[ur]  \ar[dr] &&\cdot  \ar[dr] &&\circ  \ar[dr] &&\cdot  \ar[ur]  \ar[dr] &&\circ \ar[dr]\\
&\circ \ar[ur]  &&\circ \ar[ur]  \ar[dr] &&\cdot \ar[ur] \ar[r] \ar[dr] &\circ \ar[r] &\cdot \ar[ur] \ar[dr] &&\cdot \ar[ur] &&\circ\\
&&&&\circ \ar[ur] &&\cdot \ar[ur] &&\circ \ar[ur]
}$$
Then $(\U_1,\V_1)$ is a cotorsion pair on $\mod \Lambda$. The heart $\h_1/\W_1$ is the following.
$$\xymatrix@C=0.4cm@R0.4cm{
&&&{\begin{smallmatrix}
2&\ \\
&3
\end{smallmatrix}}\ar[dr]\\
{\begin{smallmatrix}
3&&\ \\
&5&
\end{smallmatrix}}\ar[dr] \ar@{.}[rr]
&&{\begin{smallmatrix}
&2&\ \\
3&&4
\end{smallmatrix}}\ar[ur] \ar@{.}[rr]
&&{\begin{smallmatrix}
\ &2&\
\end{smallmatrix}}\\
&{\begin{smallmatrix}
\ &3&\
\end{smallmatrix}} \ar[ur]}$$
The only indecomposable object which does not lie in $\h_1$ or $\U_1$,$\V_1$ is ${\begin{smallmatrix}
3&&4\\
&5&
\end{smallmatrix}}$, since we have the following commutative diagram
$$\xymatrix@C0.6cm@R0.6cm{
{\begin{smallmatrix}
5
\end{smallmatrix}} \ar@{=}[r] \ar@{ >->}[d] &{\begin{smallmatrix}
5
\end{smallmatrix}} \ar@{ >->}[d]\\
{\begin{smallmatrix}
&4&\\
&&5
\end{smallmatrix}}\oplus{\begin{smallmatrix}
&3&\\
&&5
\end{smallmatrix}} \ar@{ >->}[r] \ar@{->>}[d] &{\begin{smallmatrix}
&4&\\
&&5
\end{smallmatrix}}\oplus{\begin{smallmatrix}
&2&\\
3&&4\\
&5&
\end{smallmatrix}} \ar@{->>}[r] \ar@{->>}[d] &{\begin{smallmatrix}
&2&\\
&&4
\end{smallmatrix}} \ar@{=}[d]\\
{\begin{smallmatrix}
3&&4\\
&5&
\end{smallmatrix}} \ar@{ >->}[r] &{\begin{smallmatrix}
4
\end{smallmatrix}}\oplus{\begin{smallmatrix}
&2&\\
3&&4\\
&5&
\end{smallmatrix}} \ar@{->>}[r] &{\begin{smallmatrix}
&2&\\
&&4\\
\end{smallmatrix}}.
}$$
We get $H_1({\begin{smallmatrix}
3&&4\\
&5&
\end{smallmatrix}})={\begin{smallmatrix}
3&&\\
&5&
\end{smallmatrix}}$ since ${\begin{smallmatrix}
&4&\\
&&5
\end{smallmatrix}}\in \mathcal P$.
Let
$$\xymatrix@C=0.3cm@R0.3cm{
&&&\circ \ar[dr] &&&&&&\circ \ar[dr]\\
{\M=} &&\circ \ar[ur]  \ar[dr] &&\cdot  \ar[dr] &&\circ  \ar[dr] &&\cdot  \ar[ur]  \ar[dr] &&\circ \ar[dr]\\
&\circ \ar[ur]  &&\cdot \ar[ur]  \ar[dr] &&\cdot \ar[ur] \ar[r] \ar[dr] &\circ \ar[r] &\cdot \ar[ur] \ar[dr] &&\cdot \ar[ur] &&\circ\\
&&&&\circ \ar[ur] &&\cdot \ar[ur] &&\circ \ar[ur]
}$$
Since $\M$ is a cluster tilting subcategory of $\B$, $(\U_2,\V_2)=(\M,\M)$ is a cotorsion pair. The heart $\h_2/\W_2=\mod \Lambda/\M$ is the following.
$$\xymatrix@C=0.4cm@R0.4cm{
&{\begin{smallmatrix}
&3&\ \\
&&5
\end{smallmatrix}}\ar[dr]
&&&&{\begin{smallmatrix}
2&\ \\
&3
\end{smallmatrix}}\ar[dr]\\
{\begin{smallmatrix}
\ &5&\
\end{smallmatrix}} \ar[ur] \ar@{.}[rr]
&&{\begin{smallmatrix}
3&&4\ \\
&5&
\end{smallmatrix}}\ar[dr] \ar@{.}[rr]
&&{\begin{smallmatrix}
&2&\ \\
3&&4
\end{smallmatrix}}\ar[ur] \ar@{.}[rr]
&&{\begin{smallmatrix}
\ &2&\
\end{smallmatrix}}\\
&&&{\begin{smallmatrix}
\ &3&\
\end{smallmatrix}} \ar[ur]}$$
Since $\U_1\subseteq \M\subseteq \V_1$, we have $\W_1\subseteq \K_2\subseteq\K_1$.
Since We get $H_1({\begin{smallmatrix}
4&&3\\
&5&
\end{smallmatrix}})={\begin{smallmatrix}
3&&\\
&5&
\end{smallmatrix}}$, we get that $\beta_{21}$ is exact. But $\beta_{12}$ is not exact, since $\xymatrix@C=0.4cm@R0.4cm{
{\begin{smallmatrix}
&3&\ \\
&&5
\end{smallmatrix}} \ar[r] &{\begin{smallmatrix}
\ &3&\
\end{smallmatrix}} \ar[r] &{\begin{smallmatrix}
&2&\ \\
3&&4
\end{smallmatrix}}}$ is a short exact sequence in $\h_1/\W_1$ but not a short exact sequence in $\h_2/\W_2$. In this case, $(\h_2\cap\K_1/\W_2)$ is $\add({\begin{smallmatrix}
\ &5&\
\end{smallmatrix}})$, we can see that $\overline \h_2\simeq \h_1/\W_1$.\\
Let $\U_3$ and $\V_3$ be the full subcategories of $\mod \Lambda$ given by the following diagram.
$$\xymatrix@C=0.3cm@R0.3cm{
 &&&\circ \ar[dr] &&&&&&\circ \ar[dr]\\
{\U_3=} &&\circ \ar[ur]  \ar[dr] &&\cdot  \ar[dr] &&\circ  \ar[dr] &&\cdot  \ar[ur]  \ar[dr] &&\circ \ar[dr]\\
&\circ \ar[ur]  &&\cdot\ar[ur]  \ar[dr] &&\cdot \ar[ur] \ar[r] \ar[dr] &\circ \ar[r] &\cdot \ar[ur] \ar[dr] &&\circ \ar[ur] &&\circ\\
&&&&\circ \ar[ur] &&\cdot \ar[ur] &&\circ \ar[ur]\\}\quad
\xymatrix@C=0.3cm@R0.3cm{
&&&\circ \ar[dr] &&&&&&\circ \ar[dr]\\
{\V_3=} &&\circ \ar[ur]  \ar[dr] &&\cdot  \ar[dr] &&\cdot  \ar[dr] &&\cdot  \ar[ur]  \ar[dr] &&\circ \ar[dr]\\
&\circ \ar[ur]  &&\cdot \ar[ur]  \ar[dr] &&\cdot \ar[ur] \ar[r] \ar[dr] &\circ \ar[r] &\cdot \ar[ur] \ar[dr] &&\cdot \ar[ur] &&\circ\\
&&&&\circ \ar[ur] &&\cdot \ar[ur] &&\circ \ar[ur]}$$
and the heart $\h_3/\W_3$ is the following.
$$\xymatrix@C=0.4cm@R0.4cm{
&{\begin{smallmatrix}
&3&\ \\
&&5
\end{smallmatrix}}\ar[dr]\\
{\begin{smallmatrix}
\ &5&\
\end{smallmatrix}} \ar[ur] \ar@{.}[rr]
&&{\begin{smallmatrix}
3&&4\ \\
&5&
\end{smallmatrix}}\ar[dr] \ar@{.}[rr]
&&{\begin{smallmatrix}
&2&\ \\
&&3
\end{smallmatrix}}\\
&&&{\begin{smallmatrix}
\ &3&\
\end{smallmatrix}} \ar[ur]}$$
Hence we get $\h_1/\W_1\simeq \h_3/\W_3$. But we find that $\U_3\nsubseteqq\K_1$ and $\V_1\nsubseteqq\add(\U_3*\V_3)$, which implies that the condition Corollary \ref{suf} is not necessary for the equivalence of two hearts.
\end{exm}

By Theorem \ref{8.11} and Proposition \ref{8.15}, we get:

\begin{prop}\label{5.00}
Let $\M$ be a cluster tilting subcategory of $\B$. Then the canonical functor
$$\pi:\B\rightarrow \B/\M$$
is half exact. Moreover, every short exact sequence
$$\xymatrix{A \ar@{ >->}[r]^{f} &B \ar@{->>}[r]^{g} &C}$$
in $B$ induces a long exact sequence
$$\cdots \xrightarrow{\underline {\Omega h'}} \Omega A \xrightarrow{\underline {\Omega f}} \Omega B \xrightarrow{\underline {\Omega g}} \Omega C \xrightarrow{\underline h'} A \xrightarrow{\underline f} B \xrightarrow{\underline g} C \xrightarrow{\underline h} \Omega^- A \xrightarrow{\underline {\Omega^- f}} \Omega^- B \xrightarrow{\underline {\Omega^- g}} \Omega^- C \xrightarrow{\underline {\Omega^- h}} \cdots$$
in the abelian category $\B/\M$.
\end{prop}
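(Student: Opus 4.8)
The plan is to deduce everything from the general results of Sections~3 and~4 applied to the special cotorsion pair $(\M,\M)$. First I would recall that, since $\M$ is cluster tilting, $(\M,\M)$ is a cotorsion pair on $\B$ by \cite[Proposition~10.5]{L}, with $\W=\M\cap\M=\M$. For any $B\in\B$, the two short exact sequences of Definition~\ref{2} for this cotorsion pair have $U_B\in\U=\M=\W$ and $V^B\in\V=\M=\W$, so $\B^+=\B^-=\B$; hence $\h=\B$, $\underline\h=\B/\M$, $\sigma^+=\sigma^-=\id$, and consequently $H=\sigma^-\circ\sigma^+\circ\pi=\pi$, which is exactly the identification recorded immediately before the statement.

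Granting this, the first assertion is immediate: Theorem~\ref{8.11} says that the associated functor $H$ is half exact, and $H=\pi$, so $\pi\colon\B\to\B/\M$ is a half exact functor into the abelian category $\B/\M$. For the ``moreover'' part I would apply Corollary~\ref{8.15} with $F=\pi$ and $\mathcal A=\B/\M$. Its hypotheses $F(\mathcal P)=F(\mathcal I)=0$ hold here, since $\mathcal P\subseteq\M$ and $\mathcal I\subseteq\M$ for a cluster tilting subcategory (equivalently $H(\mathcal P)=H(\mathcal I)=0$ by Proposition~\ref{8.7}(c)); hence $\pi$ factors through $\B/\mathcal P$ and through $\B/\mathcal I$. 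Feeding an arbitrary short exact sequence $A\rightarrowtail B\twoheadrightarrow C$ into Corollary~\ref{8.15} then yields connecting morphisms $h\colon C\to\Omega^-A$ and $h'\colon\Omega C\to A$ together with the claimed long exact sequence in $\underline\h=\B/\M$, where $\Omega$ and $\Omega^-$ denote the syzygy and cosyzygy functors, which descend along the further quotients $\B/\mathcal P\to\B/\M$ and $\B/\mathcal I\to\B/\M$ so that the displayed sequence genuinely lives in $\B/\M$.

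The hard part is purely bookkeeping rather than conceptual: one only needs to be careful in checking that $\sigma^{+}$ and $\sigma^{-}$ collapse to the identity and $H$ collapses to $\pi$ for the cotorsion pair $(\M,\M)$, and that the (co)syzygy functors of Corollary~\ref{8.15}, originally defined on $\B/\mathcal P$ and $\B/\mathcal I$, are compatible with the projections onto $\B/\M$ so that the long exact sequence is correctly interpreted there. No new estimate or construction is required beyond Theorem~\ref{8.11} and Corollary~\ref{8.15}.
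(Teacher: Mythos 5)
Your proposal is correct and takes essentially the same route as the paper: the text immediately preceding Proposition~\ref{5.00} records that for the cotorsion pair $(\M,\M)$ one has $\h=\B^+=\B^-=\B$, $\sigma^+=\sigma^-=\id$, and $H=\pi$, and then the proposition is obtained by citing Theorem~\ref{8.11} together with Corollary~\ref{8.15}, exactly as you do.
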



\begin{exm}
Let $\M$ be a cluster tilting subcategory of $\B$ (for instance, see \cite[{Example 4.2}]{DL}). Then we have a half exact functor
\begin{align*}
G: \text{ }&\B \rightarrow \mod(\M/\mathcal P)\\
   &X\mapsto \Ext^1_{\B}(-,X)|_{\M}.
\end{align*}
This is a composition of the half exact functor $\pi:\B\rightarrow \B/\M$ given by Proposition \ref{5.00} and an equivalence
\begin{align*}
 \text{ }&\B/\M \xrightarrow{\simeq} \mod(\M/\mathcal P)\\
   &X\mapsto \Ext^1_{\B}(-,X)|_{\M}.
\end{align*}
given by \cite[{Theorem 3.2}]{DL}. By Proposition \ref{8.14}, $G(X)=0$ if and only if $X\in \M$.
\end{exm}

A more general case is given as follows. If $\M$ is a rigid subcategory of $\B$ which is contravariantly finite and contains $\mathcal P$, then by \cite[{Proposition 2.12}]{L}, $(\M,{\M}^{\bot_1})$ is a cotorsion pair where ${\M}^{\bot_1}=\{X\in \B \text{ }| \text{ }\Ext^1_\B(\M,X)=0 \}$. Since $\M$ is rigid, we have $\M \subseteq {\M}^{\bot_1}$. In this case we have 
$\B^+=\B$, $\B^-=\h$, $\sigma^+=\text{id}$ and $H=\sigma^-\circ \pi$. By \cite[{Theorem 3.2}]{DL}, there exists an equivalence between $\underline \h$ and $\mod(\M/\mathcal P)$. Hence by Theorem \ref{8.11}, we get the following example:

\begin{exm}
Let $\M$ be a rigid subcategory of $\B$ which is contravariantly finite and contains $\mathcal P$ (for instance, see \cite[{Example 4.3}]{DL}). Then there exists a half exact functor
\begin{align*}
G: \text{ }&\B \rightarrow \mod(\M/\mathcal P)\\
   &X\mapsto \Ext^1_{\B}(-,\sigma^-(X))|_{\M}
\end{align*}
which is a composition of $H$ and the equivalence
\begin{align*}
\text{ }&\underline \h \xrightarrow{\simeq} \mod(\M/\mathcal P)\\
   &Y\mapsto \Ext^1_{\B}(-,Y)|_{\M}
\end{align*}
given by \cite[{Theorem 3.2}]{DL}. By Proposition \ref{8.14}, $G(X)=0$ if and only if $X\in {\M}^{\bot_1}$.
\end{exm}




\section*{Acknowledgments}
The author would like to thank Osamu Iyama for his helpful advices and corrections.


\end{document}